\theoremstyle{plain}
\newtheorem{theorem}{Theorem}[section]
\newtheorem{corollary}[theorem]{Corollary}
\newtheorem{lemma}[theorem]{Lemma}
\newtheorem{proposition}[theorem]{Proposition}
\newtheorem{remark}{Remark}
\theoremstyle{definition}
\newtheorem{assumption}[theorem]{Assumption}
\theoremstyle{remark}
\newcommand{\N}{\mathbb{N}}
\newcommand{\R}{\mathbb{R}}
\newcommand{\Q}{\mathbb{Q}}
\newcommand{\Z}{\mathbb{Z}}
\newcommand\calA{\mathcal{A}}
\newcommand\calT{\mathcal T}
\newcommand\calNN{\mathcal{NN}}
\newcommand\calI{\mathcal I}
\newcommand\calE{\mathcal E}
\newcommand\calL{\mathcal L}
\newcommand\calF{\mathcal F}
\newcommand\e{\varepsilon}
\newcommand\dist{\operatorname{dist}}
\newcommand{\bb}{{\operatorname{b}}}
\newcommand{\ee}{{\operatorname{e}}}
\newcommand{\wto}{\rightharpoonup}
\newcommand\ho{{\operatorname{hom}}}
\newcommand\loc{{\operatorname{loc}}}
\newcommand{\step}[1]{\medskip\noindent\textit{Step #1. }}
\title{Stochastic homogenization of nonconvex discrete energies with degenerate growth}
\date{\today}
\author[1]{Stefan Neukamm\thanks{stefan.neukamm@tu-dresden.de}}
\author[1]{Mathias Sch\"affner\thanks{mathias.schaeffner@tu-dresden.de}}
\author[2]{Anja Schl\"omerkemper\thanks{anja.schloemerkemper@mathematik.uni-wuerzburg.de}}
\affil[1]{Department of Mathematics, Technische Universit\"at Dresden}
\affil[2]{Institute of Mathematics, University of W\"urzburg}
\date{January 24, 2017}
\begin{document}

\maketitle

\begin{abstract}
 Recently, there has been considerable effort to understand periodic and stochastic homogenization of elliptic equations and integral functionals with degenerate growth, as well as related questions on the effective behavior of conductance models in degenerate, random environments. In the present paper we prove stochastic homogenization results for nonconvex energy functionals with degenerate growth under moment conditions. In particular, we study the continuum limit of discrete, nonconvex energy functionals defined  on crystal lattices in dimensions $d\geq 2$. We consider energy functionals with random (stationary and ergodic) pair interactions; thus our problem corresponds to a stochastic homogenization problem. In the non-degenerate case, when the interactions satisfy a uniform $p$-growth condition, the homogenization problem is well-understood. In this paper, we are interested in a degenerate situation, when the interactions neither satisfy a uniform growth condition from  above nor from below. We consider interaction potentials that obey a $p$\mbox{-}growth condition with a random growth weight $\lambda$. We show that if $\lambda$ satisfies the moment condition $\mathbb E[\lambda^\alpha+\lambda^{-\beta}]<\infty$ for suitable values of $\alpha$ and $\beta$, then the discrete energy $\Gamma$\mbox{-}converges to an integral functional with a non-degenerate energy density. In the scalar case, it suffices to assume that $\alpha\geq 1$ and $\beta\geq\frac{1}{p-1}$ (which ensures the non-degeneracy of the homogenized energy density). In the general, vectorial case, we additionally require that $\alpha>1$ and $\frac{1}{\alpha}+\frac{1}{\beta}\leq \frac{p}{d}$. 
\end{abstract}

{\bf Keywords: }stochastic homogenization, nonconvex energy functionals, degenerate growth, $\Gamma$-convergence
\tableofcontents

\section{Introduction}

Stochastic homogenization of uniformly elliptic systems and integral functionals with uniform growth is classic and well understood. In contrast, homogenization of systems and (vectorial) integral functionals with degenerate growth is more recent and the level of understanding is still incomplete, see discussion below. Gaining a deep understanding of the degenerate case is of mathematical interest in its own and moreover important for problems and applications in different fields: For example, discrete (convex \& quadratic) integral functionals with degenerate growth are intensively studied in stochastic analysis, namely, as Dirichlet forms associated with random conductance models (see \cite{Biskup} for a survey). In that context homogenization corresponds to the validity of an invariance principle for a random walk in a random environment.  Another application is atomistic modelling of materials, where homogenization of discrete integral functionals  corresponds to an ansatz-free derivation of a continuum model via a discrete-to-continuum limit; in particular, models for rubber may lead to integral functionals with degenerate growth properties, e.g. see \cite{Gloria,KuhnGrun}. 

In this contribution we study stochastic homogenization of discrete energy functionals with nonconvex random pair interactions of  degenerate $p$-growth and finite range of dependence. For simplicity, in the introduction we restrict to the lattice graph $\calL=\Z^d$ with edge set $\calE=\{\ee=[z,z+e_i]\ |\ z\in\Z^d, i=1,\dots,d\}$ and $e_1,\ldots,e_d$ being the canonical basis in $\R^d$. Later we consider complex hyper-cubic lattices and interactions of finite range. For $0<\e\ll1$ consider the energy functional
\begin{equation}\label{minproblem}
 H_\e(\omega;u,A):=\e^d\!\!\!\!\sum_{\ee\in\e\calE\cap A}\!\!\! V\left(\omega;\tfrac{\ee}\e,\nabla u(\ee)\right),
\end{equation}
where $A$ denotes a domain in $\R^d$, $u:\e\calL\to \R^n$ is a state variable, $\nabla u(\ee)$ denotes a discrete directional derivative, cf. \eqref{def:findif}, $V(\omega;\cdot,\cdot): \calE \times \R^n \to [0,\infty)$ denotes a random  interaction potential, and $\omega$ stands for a configuration sampled from a stationary and ergodic law. 

In \cite{ACG11}, a general $\Gamma$-convergence result for functionals of the form \eqref{minproblem} is proven under the assumption that the interaction potential $V(\omega;\cdot,\cdot)$ is non-degenerate, i.e., satisfies a uniform $p$-growth condition (see also \cite{AC04,BS13} for the non-random case). The key point and novelty of the present paper is to consider random potentials with degenerate growth: We suppose that
$$\lambda(\omega;\ee)(\tfrac1c|r|^p-c) \leq V(\omega;\ee,r)\leq c(\lambda(\omega;\ee)|r|^p+1),$$
where $\lambda(\omega;\cdot):\calE\to[0,\infty)$ denotes a (stationary and ergodic) random weight function that satisfies certain moment conditions, cf.\  Assumptions~\ref{H} and \ref{A:2T}. Roughly speaking, we assume that for exponents $\alpha,\beta$ satisfying
\begin{equation}\label{intro:ass0}
 \alpha\geq 1,\quad\beta\geq\frac1{p-1} 
\end{equation}
we have
\begin{equation*}
  \lim_{\e\downarrow0}\frac{\e^d}{|Q|}\sum_{\ee\in\e\calE\cap Q}\left(\lambda(\omega;\tfrac{\ee}\e)^\alpha+\lambda(\omega;\tfrac{\ee}\e)^{-\beta}\right)<\infty,
\end{equation*}
where $Q$ denotes an arbitrary cube in $\R^d$. Note that this allows for interactions that are degenerate in the sense that we might have $\inf_{\ee\in\calE}\lambda(\omega;\ee)=0$ and $\sup_{\ee\in\calE}\lambda(\omega;\ee)=\infty$ with positive probability.
\medskip

Our main result proves that $H_\e(\omega;\cdot,A)$ $\Gamma$-converges (for almost every $\omega$) to a deterministic, homogeneous integral functional, whose limiting energy density (thanks to \eqref{intro:ass0}) is non-degenerate and satisfies a standard $p$-growth condition, cf.~Theorem~\ref{T:1} and Corollary~\ref{C:1}.  For the convergence result in the scalar case (i.e.\ $n=1$), it suffices to combine the moment condition \eqref{intro:ass0} with an additional mild ``convexity at infinity'' assumption on the interaction potential $V(\omega;\cdot,\cdot)$, cf.~Assumption~\ref{A:2T}~(b). In the vectorial case (i.e.\ $n>1$), we need additional moment conditions, namely   
\begin{equation}\label{intro:ass2}
\alpha>1,\quad\frac1\alpha+\frac1\beta\leq\frac{p}d.
\end{equation}
On a technical level, the main difference between the scalar and the vectorial case shows up in a gluing construction (see Lemmas~\ref{L:glue}~and~\ref{L:glue:convex}), which we use e.g. to modify boundary values in the construction of recovery sequences. In the scalar case, the gluing construction relies on a truncation argument, which is a variant of a construction in \cite{CS79}, where periodic homogenization is studied. In the vectorial case the truncation argument is not available. Instead, we impose relation \eqref{intro:ass2} from which we deduce a Rellich-type compact embedding, see Lemma~\ref{L:interpolation2}. In the critical case $\frac{1}{\alpha}+\frac{1}{\beta}=\frac{p}{d}$ the Rellich-type argument is subtle. Its proof combines a weighted Poincar\'e inequality, a two-scale argument and input from ergodic theory.
\smallskip

The study of homogenization problems of differential equations (see e.g. the classical monograph \cite{Bensoussan} and the references therein) and integral functionals has a long history. Periodic (continuum) homogenization of convex and nonconvex integral functionals was studied in the seminal works \cite{Braides86, Marcellini77, Mueller87}. In \cite{DMM86, MM94} these results were extended to the stationary and ergodic, random case by appealing to the subadditive ergodic theorem in \cite{AK81}. The interest in discrete-to-continuum $\Gamma$-limits for energies with nonconvex interactions is more recent, see e.g. \cite{AC04, ACG11,BC,Braides-Gelli-2000, BS13, KLR,SSZ11, SS15b}. 

Homogenization of divergence form elliptic equations and of integral functionals with degenerate growth in the above sense have basically been studied in two situations:
\begin{enumerate}[(i)]
\item The weight functions are of \textit{Muckenhoupt} class, see e.g. \cite{BT04,DASC92,DASC94,EPPW06}. These works exploit the existence of Sobolev inequalities in spaces with Muckenhoupt weights.
\item The weight functions satisfy moment conditions, see e.g. \cite{CS79} where periodic, scalar, convex integral functionals are studied under the assumption that the periodic weight function $\lambda$ satisfies $\lambda,\lambda^{-\frac{1}{p-1}}\in L_{\loc}^1(\R^d)$. This matches precisely our assumptions in the random and discrete setting, cf.\ \eqref{intro:ass0}. The analysis in \cite{CS79} (and related results for degenerate elliptic equations in divergence form, e.g. \cite{ZP08}, or for unbounded integral functionals with convex growth \cite{DG15}) relies on truncation methods.
\end{enumerate}
Recently, (discrete) elliptic equations with degenerate growth have also been extensively studied in the context of invariance principles for random walks in degenerate random environments, see e.g.\ \cite{ADS15,BM15,CD15}, or on percolation clusters, see e.g.\ \cite{BB07,MP07,SS04}; see also \cite{Biskup} for an overview. In particular, in \cite{ADS15,CD15} a quenched invariance principle is obtained under the moment condition $\frac1\alpha+\frac1\beta<\frac2d$, cf.~\eqref{intro:ass2}  with $p=2$. The latter is used to establish a weighted Sobolev inequality, which is needed to implement a  Moser iteration. The same moment condition and the topic of regularity is also addressed in \cite{Bella-Otto}, which discusses a Liouville property for elliptic systems with degenerate, stationary and ergodic coefficients. A similar problem is addressed in \cite{LNO16}, where regularity results for the corrector in stochastic homogenization are proven in a percolation like situation. In the very recent work \cite{AD16}, quantitative homogenization and large scale regularity results have been established for (discrete) elliptic equations on the supercritical (Bernoulli bond) percolation cluster. 

In a wider context, homogenization of non-convex vectorial problems with degenerate growth is studied in \cite{BG95}, where soft and stiff inclusion are considered in a periodic setting, see also the monograph~\cite{JKO94} for more on this and other 'non-standard' (periodic and stochastic) homogenization problems. 
\smallskip

The results of the present paper are novel in various aspects.
\begin{itemize}
 \item To our knowledge it is the first stochastic homogenization result for nonconvex vectorial (discrete) energy functionals with degenerate growth from below and above. We are not aware of any analogous result in the continuum case; we will extend the proofs of this paper to the continuum case in an upcoming work. 
 \item As mentioned above, in the convex/monotone operator case stochastic homogenization with degenerate growth is considered under the assumption that the weight functions are of Muckenhoupt class for every realization, cf.~\cite{EPPW06}. Notice that the moment assumptions considered in our paper are less restrictive in the scalar case while they cannot directly be compared in the vectorial setting.
 \item We would like to emphasize that similar to \cite{ADS15,ADS16,CD15} and \cite{Bella-Otto}, we use the relation $\frac1\alpha+\frac1\beta\leq \frac{p}{d}$, cf.~\eqref{intro:ass2}, in order to obtain a weighted Poincar\'e inequality with a constant which can be controlled by the ergodic theorem. It turns out that in order to show the invariance principles in \cite{ADS15,ADS16,CD15} or the regularity result of \cite{Bella-Otto} the strict inequality is needed, but, as we prove, equality is sufficient to prove our homogenization result.  
\end{itemize}
\smallskip

The main focus of our paper is to understand the general stationary and ergodic case in stochastic homogenization of nonconvex discrete energies. In the scalar case our moment conditions are optimal for homogenization towards a \textit{non-degenerate} integral functional, see Remark~\ref{rem:optimal}. If we replace ergodicity by the strong assumption of independent and identically distributed weight functions $\lambda(\omega;\ee)$, then the moment condition on $\lambda(\omega;\ee)^{-1}$ can be significantly weakened by appealing to ideas from \cite{MO16} and  \cite{ADS16}. We describe this for a rather particular case in Section~\ref{sec:iid}.

\medskip

This article is organized as follows: In the next section we give the precise definition of our discrete energy. In Section~3 we state our main result, cf. Theorem~\ref{T:1} and Corollary~\ref{C:1},  and present a detailed summary of its proof. In Section~4 we present the proof of Theorem~\ref{T:1} and several auxiliary results.

\subsection*{Notation}
For convenience of the reader we list some of the notation used in the paper:
\begin{itemize}
\item $d\geq2$ dimension of the domain of $u$; $n\geq 1$ dimension of the codomain.
\item $\calL$ (and $\calE)$ stand for the set of vertices $x,y,z,\ldots$ (and edges $\bb, \ee,\ldots$),  see Section~\ref{sec:2}.
\item $\calNN$ stands for the subset of $\calE$ for which we impose moment conditions on the growth from below, see Section~\ref{sec:2}.
\item $\calE_0$ and $\calNN_0$ are defined in \eqref{def:E0} and \eqref{def:NN0}.
\item $[x_\ee,y_\ee]$ our notation for an (oriented) edge $\ee\in\calE$.
\item For $A\subset\R^d$ and $\calE'\subset\calE$ we write $\ee=[x_\ee,y_\ee]\in\calE'\cap A$ if $\ee\in\calE'$ and $x_\ee,y_\ee\in A$.
\item $Y=[0,1)^d$ denotes the fundamental region of the lattice.
\item $\calA^\e, \calA^\e_g(A), \calA_{\#}(kY)$ denote function spaces of piecewise affine functions (subordinate to $\e\calL$ or $\calL$, respectively), see \eqref{def:calA}, \eqref{def:Aepsg} and \eqref{def:Aper}.
\item $\partial_\bb^\e$ stands for a discrete directional derivative, see \eqref{def:findif2}.
\item For every $A\subset\R^d$ and $\delta>0$, we define 
  \begin{align*}
    (A)_{\delta}:=\{x\in\R^d\ |\ \dist(x,A)<\delta\},\qquad (A)_{-\delta}:=\{x\in A\ |\ \dist(x,\R^d\setminus A)>\delta\}.
  \end{align*}
  In particular, we use this notation in connection with a parameter $R\geq 1$, the \textit{range of interaction} defined in \eqref{def_of_R}.
\item For $A,B\subset\R^d$ we write $A\Subset B$ if $\bar A$ is compact and contained in the open interior of $B$.
\item We follow the convention that $\frac{\beta}{\beta+1}p=(1-\frac{1}{\beta+1})p=p$ if $\beta=\infty$.
\end{itemize}

\section{Setting of the problem}\label{sec:2}
\paragraph{The graph $(\calL,\calE)$ and the edge set $\calNN$}
Fix $d\geq2$. We consider locally finite, connected, $\Z^d$-periodic graphs with oriented edges. More precisely, we assume that:
\begin{itemize}
 \item The vertex set $\calL\subset\R^d$ has the form $\calL=\bigcup_{i=1}^k(q_i+\Z^d),$ where $k\in\N$ and $q_1,\dots,q_k\in Y:=[0,1)^d$ with $q_1=0$ and $q_i\neq q_j$ for $i\neq j$, (i.e.\ $\calL$ is a crystal lattice).
 \item The edge set $\calE\subset \{[x,y] | x,y\in \calL,x\neq y\}$ is $\Z^d$-periodic and locally finite, i.e.\ $\ee\in\calE$ implies $\ee+\Z^d\subset\calE$ and for all $x\in\calL$ the set $\{[x,y]\in \calE\}$ is finite.
\end{itemize}
For examples see the end of Section~\ref{sec:2}. Given a subset $\calE'\subset\cal E$ and two vertices $x,y$, we say $(x,y)$ are connected in $(\calL,\calE')$, if there exists a path $\ell=(x_0,\ldots,x_m)$ of finite length $m\in\N$ such that $x_0=x$, $x_m=y$ and for all $i=1,\ldots,m$ we have either $[x_{i-1},x_i]\in\calE'$ or $[x_{i},x_{i-1}]\in\cal E'$ (i.e.\ we ignore the orientation of the edges when speaking about connectedness). We say that $(\calL,\calE')$ is connected, if any two distinct vertices $x,y\in\calL$ are connected in $(\calL,\calE')$. We assume that:
\begin{itemize}
 \item $(\calL,\calE)$ is connected.
\end{itemize}
The edge set $\calE$ can be written as a direct sum of the lattice $\Z^d$ and the generating edge set
\begin{equation}\label{def:E0}
  \calE_0:=\{[x,y]\in\calE\ |\ x\in Y\}.
\end{equation}
Indeed, for every $\ee\in\calE$ there exists a unique pair $(z_\ee,\bb_\ee)\in\Z^d\times \calE_0$ such that $\ee=z_\ee+\bb_\ee$. 
\smallskip

In addition to $\calE$ we consider another, possibly smaller set of edges, which we denote by $\calNN$ and which is fixed from now on. Let us anticipate that we are going to assume moment conditions on the growth from below only for the potentials associated with edges in $\calNN$. We suppose that
\begin{itemize}
\item $\calNN\subset\calE$ is $\Z^d$-periodic and the subgraph $(\calL,\calNN)$ is connected.
\end{itemize}
A typical choice of $\calNN$ is the set of nearest-neighbour edges. Note that our assumptions allow for more general choices of $\calNN$. We set
\begin{equation}\label{def:NN0}
\calNN_0:=\calE_0\cap\calNN.
\end{equation}
\paragraph{Discrete derivative} We introduce discrete derivatives for state variables defined on the scaled lattice $\e\calL$ with scaled edge set $\e\calE$.
For a function $u:\e\calL\to\R^n$ and $\ee\in\e\calE$, we define the discrete directional derivative as
\begin{equation}\label{def:findif}
 \nabla u(\ee):=\frac{u(y_\ee)-u(x_\ee)}{|y_\ee-x_\ee|},
\end{equation} 
where $x_\ee,y_\ee\in\e\calL$ are the unique vertices with $\ee=[x_\ee,y_\ee]$. Note that $\nabla u(\ee)\in \R^n$.
For $z\in\e\mathcal L$, $\bb\in\calE_0$ and $u:\e\calL\to\R^n$ we introduce the notation
\begin{equation}\label{def:findif2}
 \partial_\bb^\e u(z):=\nabla u(z+\e \bb).
\end{equation}

For our purpose it is convenient to identify $u:\e\calL\to\R^n$ with a canonical piecewise affine interpolation. To that end, we fix a triangulation of $\calL\cap\overline Y$ and denote by $\calT$ its $\Z^d$-periodic extension. We set
\begin{equation}\label{def:calA}
 \calA^\e:=\{u\in C(\R^d,\R^n):u\mbox{ is affine on $T$ for each element $T\in\e\calT$}\}.
\end{equation}
In this paper we tacitly identify $u:\e\calL\to\R^n$ with its unique interpolation in $\calA^\e$. As usual, $\nabla u$ denotes the weak derivative of a function $u: \R^d \to \R^n$ with $\nabla u: \R^d \to \R^{n\times d}$.
 Note that we have the following elementary relation between norms of discrete gradients and the corresponding norms of the associated piecewise affine interpolation:
\begin{lemma}\label{L:sumint}
For $1\leq q<\infty$ there exists $c_0\in(0,\infty)$ such that
\begin{align}\label{est:AB}
\forall u\in\calA^1:\, \int_Y|\nabla u|^q\,dx\leq c_0\sum_{\ee\in\calNN\cap B_{\frac{R}4}(0)}|\nabla u(\ee)|^q
\end{align}
and for all bounded Lipschitz domains $A\subset \R^d$ and $\e>0$ it holds
\begin{align}\label{est:sumint}
 \forall u\in\calA^\e:\, \int_{(A)_{-\e R}}|\nabla u|^q\,dx \leq c_0\e^d\!\!\!\!\sum_{z\in\e\Z^d\cap A}\sum_{\bb\in\calNN_0}|\partial_\bb^\e u(z)|^q,
\end{align}
with $R\geq 1$ defined in \eqref{def_of_R} below.
\end{lemma}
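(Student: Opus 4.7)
These two inequalities share the same structure; \eqref{est:sumint} is the $\e$-rescaled, globalized version of \eqref{est:AB}. My plan is to prove \eqref{est:AB} first, by a local argument exploiting the connectedness of $(\calL,\calNN)$ together with $\Z^d$-periodicity, and then to derive \eqref{est:sumint} by a change of variables on each unit cell $z+\e Y$, $z\in\e\Z^d$.

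For \eqref{est:AB}, I would first observe that the triangulation $\calT$ contains only finitely many elements $T$ meeting $\overline Y$, and that on each such $T$ the interpolant $u\in\calA^1$ has a constant gradient which is a fixed linear combination of the values $u(x)$ at the vertices $x\in\calL\cap\overline T$. Choosing a reference vertex $x_0=0\in\calL$, this can be rewritten as a fixed linear combination of the differences $u(x)-u(x_0)$, reducing the problem to bounding each such difference by a sum of discrete derivatives of edges in $\calNN$. Since $(\calL,\calNN)$ is connected, for each such $x$ there exists a path $(x_0,x_1,\dots,x_m=x)$ in $\calNN$, and a telescoping identity
\begin{equation*}
u(x)-u(x_0)=\sum_{i=1}^m\sigma_i\,|y_{\ee_i}-x_{\ee_i}|\,\nabla u(\ee_i),\qquad \sigma_i\in\{\pm1\},\ \ee_i\in\calNN,
\end{equation*}
holds, where $\ee_i$ is the edge between $x_{i-1}$ and $x_i$ up to orientation. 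By $\Z^d$-periodicity of $\calNN$ and the finiteness of the set of vertices $x$ to which $x_0$ must be connected, I can fix, once and for all, paths whose edges lie in $B_{R/4}(0)$ --- this is precisely the role of the range $R$ defined in \eqref{def_of_R}. A discrete H\"older inequality then gives a pointwise bound of $|\nabla u|^q$ on $T$ by $C\sum_{\ee\in\calNN\cap B_{R/4}(0)}|\nabla u(\ee)|^q$, where $C$ depends only on $q$ and on the triangulated graph $(\calL,\calNN,\calT)$. Integrating over $T$ and summing over the finitely many $T$'s meeting $\overline Y$ yields \eqref{est:AB}.

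For \eqref{est:sumint}, for $z\in\e\Z^d$ set $v_z(x):=u(z+\e x)$. The $\Z^d$-periodicity of $\calT$ ensures $v_z\in\calA^1$, and a direct computation using the decomposition $\ee=z_\ee+\bb_\ee$ with $\bb_\ee\in\calE_0$ gives $\nabla v_z(\ee)=\e\,\partial_{\bb_\ee}^\e u(z+\e z_\ee)$ together with $\int_{z+\e Y}|\nabla u|^q\,dy=\e^{d-q}\int_Y|\nabla v_z|^q\,dx$. Applying \eqref{est:AB} to $v_z$ and rescaling yields
\begin{equation*}
  \int_{z+\e Y}|\nabla u|^q\,dy\leq c_0\e^d\!\!\!\!\sum_{\ee\in\calNN\cap B_{R/4}(0)}\!\!\!\!|\partial_{\bb_\ee}^\e u(z+\e z_\ee)|^q.
\end{equation*}
Summing over those $z\in\e\Z^d$ for which $(z+\e Y)\cap (A)_{-\e R}\neq\emptyset$ covers $(A)_{-\e R}$, and --- provided $R$ is large enough compared to $\sqrt d$ and to the radii $|z_\ee|$ for $\ee\in\calE_0$, as encoded in \eqref{def_of_R} --- all the shifted base points $z+\e z_\ee$ still lie in $\e\Z^d\cap A$. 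A change of summation index $(z,\ee)\mapsto(z+\e z_\ee,\bb_\ee)$, combined with the uniform bound on the cardinality of $\{\ee\in\calNN\cap B_{R/4}(0):\bb_\ee=\bb\}$ for each $\bb\in\calNN_0$, yields \eqref{est:sumint} after possibly enlarging $c_0$.

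The only genuinely non-routine step is the uniform choice of paths: I must use the connectedness of $(\calL,\calNN)$ --- and not merely that of $(\calL,\calE)$ --- together with $\Z^d$-periodicity to produce finitely many paths whose edges all lie in $B_{R/4}(0)$. Everything else is bookkeeping based on the finiteness of $\calE_0$ and $\calNN_0$ and the affine structure on each element of $\calT$.
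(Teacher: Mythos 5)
Your proof is correct and follows essentially the same strategy as the paper: for \eqref{est:AB} bound the piecewise-constant gradient by pairwise differences (you anchor at $x_0=0$, the paper uses all pairs in $\calL\cap[0,1]^d$, an immaterial difference) and telescope along paths in $\calNN\cap B_{R/4}(0)$ whose existence is exactly what the definition of $R$ guarantees; for \eqref{est:sumint} rescale, cover $(A)_{-\e R}$ by lattice cubes, and control the multiplicity of the resulting sum. Only a small slip: in the multiplicity step you should invoke the bound on $|z_\ee|$ for $\ee\in\calNN\cap B_{R/4}(0)$ (which follows from $|x_\ee|<R/4$ and $x_{\bb_\ee}\in Y$), not the ``radii $|z_\ee|$ for $\ee\in\calE_0$'' --- for those $z_\ee=0$ by definition.
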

(For the convenience of the reader we present the elementary proof in Appendix~\ref{appendix}.)
\medskip

At various places in the paper we consider functions $u\in\calA^\e$ defined on a Lipschitz domain $A\subset\R^d$ subject to \textit{Dirichlet boundary conditions}. With the latter we mean that $u\in\mathcal A^\e$ is prescribed on all vertices outside of $A$ and in a boundary layer of thickness proportional to $\e$ (i.e.\ on all vertices that ``interact'' with vertices outside of $A$). For the precise definition we introduce the \textit{range of interaction,} 
\begin{equation}
 \label{def_of_R}
    R\text{ defined as the  smallest number with the following properties:}
\end{equation}
\begin{enumerate}[(a)]
 \item the graph with vertex set $\calL\cap[0,1]^d$ and edge set $\calNN\cap B_{\frac{R}{4}}(0):=\{[x,y]\in\calNN\,:\,|x|,|y|<\frac{R}{4}\,\}$ is connected,
 \item $R\geq \max\{|y|\,:\,[x,y]\in\calE_0\}$.
\end{enumerate}
For $g\in W^{1,\infty}_\loc(\R^d,\R^n)$ and $A\subset \R^d$ we set
\begin{equation}\label{def:Aepsg}
 \mathcal A^\e_g(A):=\{\,u\in\mathcal A^\e\,:\,u=g\text{ in }\R^d\setminus(A)_{-\e R}\,\}.
\end{equation}
\begin{remark}\label{Remark:finiterange0}
Note that $R$ is a finite number that only depends on $(\calL,\calNN)$. By (a), we have $R\geq 4\sqrt d >1$. Furthermore, by \eqref{def:findif2} and (b) we have for any $z\in\Z^d$ and $u,v\in\calA^\e$:
\begin{equation}\label{rem:finiterange}
 u= v\quad\mbox{in $B_{\e R}(z)$}\quad\Rightarrow\quad \partial_\bb^\e u(z)=\partial_\bb^\e v(z)\mbox{ for all $\bb\in\calE_0$.}
\end{equation}
\end{remark}

\paragraph{Stationary and ergodic interaction potentials} We consider random interaction potentials $\{V(\omega;\ee,\cdot)\}_{\ee\in\calE}$, $V(\omega;\ee,\cdot):\R^n\to[0,\infty)$, that are \textit{statistically homogeneous and ergodic}, and \textit{continuous in their last argument}. We phrase this assumption by appealing to the language of ergodic, measure preserving dynamical systems (which is a  standard in the theory of stochastic homogenization, see e.g. the seminal paper \cite{PV79}): Let $(\Omega,\calF,\mathbb P)$ denote a probability space and $\tau=(\tau_z)_{z\in\Z^d}$ a family of measurable mappings $\tau_z:\Omega\to\Omega$ satisfying
  \begin{itemize}
  \item (group property) $\tau_0\omega=\omega$ for all $\omega\in\Omega$ and $\tau_{x+y}=\tau_x\tau_y$ for all $x,y\in\Z^d$.
  \item (stationarity) For every $z\in\Z^d$ and $B\in\calF$ it holds $\mathbb P(\tau_z B)=\mathbb{P}(B)$.
  \item (ergodicity) All $B\in\calF$ with $\tau_z B=B$ for all $z\in\Z^d$ satisfy $\mathbb P(B)\in\{0,1\}$.
  \end{itemize}
  For each $\bb\in\calE_0$ let the potential $V_\bb:\Omega\times \R^n\to[0,\infty)$, $(\omega,r)\mapsto V_{\bb}(\omega;r)$ be measurable in $\omega$ and continuous in $r$.
  We assume that the random interaction potentials $\{V(\omega;\ee,\cdot)\}_{\ee\in\calE}$ take the form
  \begin{equation*}
    V(\omega;\ee,\cdot)=V_{\bb_\ee}(\tau_{z_\ee}\omega;\cdot)\qquad\text{for }\ee=z_\ee+\bb_\ee\in\calE\text{ and all }\omega\in\Omega.
  \end{equation*}
  Note that by this construction, for any finite set of edges $\ee_1,\ldots,\ee_m\in\calE$ and all $r\in\R^n$ the joint distribution of the random variables $V(\omega;\ee_1+z,r),\ldots, V(\omega;\ee_m+z,r)$ does not depend on the shift $z\in\Z^d$. See the end of this section for examples.

\paragraph{Energy functional}
 For given $\e>0$ and $A\subset\R^d$, we define the energy of the discrete system $E_\e(\cdot;\cdot,A):\Omega\times L_\loc^1(\R^d,\R^n)\to[0,\infty]$ by
\begin{equation}\label{ene:pair}
 E_\e(\omega;u,A):=\begin{cases}
                    \displaystyle{\e^d\!\!\!\!\sum_{z\in\e\Z^d\cap A}\sum_{\bb\in \calE_0}}V_\bb(\tau_{\frac{z}\e}\omega;\partial_\bb^\e u(z))&\mbox{if $u\in\calA^\e$,}\\
                    \infty&\mbox{otherwise.}
                   \end{cases}
\end{equation}
Note that \eqref{minproblem} coincides with  $E_\e(\omega;u,A)$ for $u\in\calA^\e$ up to a small difference at the boundary: In \eqref{minproblem} we sum over all edges in $\e\calE$ that connect vertices in $A$, while in the definition of $E_\e$ we sum over all edges $\ee\in \e\calE$ of the form $\ee=z+\e\bb$ with $z\in\e\Z^d\cap A$ and $\bb\in\calE_0$. For the upcoming analysis it is more convenient to work with $E_\e$. However, as shown in Corollary~\ref{C:1} our results also hold for $H_\e$.

\paragraph{Moment conditions}
We will prove a homogenization result for the energy $E_\e$ defined above under the following growth conditions on the interaction potentials $V_\bb$: 
\begin{assumption}\label{H}
There exist $1<p<\infty$, a finite constant $c_1>0$, exponents
\begin{equation}\label{alphabeta0}
1 \leq \alpha \leq \infty,\quad\tfrac1{p-1}\leq \beta \leq \infty,
\end{equation}
and random variables $\lambda_\bb:\Omega\to[0,\infty)$ (for $\bb\in\calE_0$) such that the following properties hold:
\begin{itemize}
\item ($p$-growth condition)
\begin{align}
  \lambda_\bb(\omega) (\frac1{c_1}|r|^p-{c_1})\leq V_\bb(\omega;r)\leq {c_1}(1+\lambda_\bb(\omega) (|r|^p+1)),\label{ass:V:1}
\end{align}
\item (moment condition)
  \begin{equation}
    \begin{aligned}
      \forall\bb\in\calE_0\,:\,&
      \left\{\begin{aligned}
          \mathbb E[\lambda_\bb^\alpha]&<\infty&&\text{if }\alpha<\infty,\\
          \sup_{\Omega}\lambda_\bb&<\infty&&\text{if }\alpha=\infty,
        \end{aligned}\right.\\
      \forall\bb\in\calNN_0\,:\,&
      \left\{\begin{aligned}
          \mathbb E[(\tfrac{1}{\lambda_\bb})^{\beta}]&<\infty&&\text{if }\beta<\infty,\\
          \sup_{\Omega}\tfrac{1}{\lambda_\bb}&<\infty&&\text{if }\beta=\infty,
        \end{aligned}\right.
    \end{aligned}      
    \label{ass:X0}        
  \end{equation}
  where $\mathbb E$ denotes the expected value.
\end{itemize}      

\end{assumption}
Note that \eqref{alphabeta0} in particular implies $\frac{\beta}{\beta+1}p\geq 1$, where here and below we follow the convention that $\frac{\beta}{\beta+1}p=(1-\frac{1}{\beta+1})p=p$ if $\beta=\infty$. We do not assume any quantitative continuity of the interaction potentials (like Lipschitz continuity), see Remark~\ref{R:lipschitz} for a further discussion. Moreover, in \eqref{ass:X0} the moment conditions on the growth from below are only imposed for edges in $\calNN_0$. 
The above assumptions ensure coercivity of the energy $E_\e$ and that the homogenized energy density defined below satisfies a non-degenerate $p$-growth condition, see Lemma~\ref{L:indwhom} and Lemma~\ref{L:coercivity}. However, we need some additional assumptions on the interaction potentials for our homogenization result: either we impose stronger moment conditions than \eqref{alphabeta0} or we consider the scalar case only and impose some mild \textit{convexity at infinity} condition on the interaction potentials $V_\bb$.
\begin{assumption}\label{A:2T}
 One of the following statements are true:
 \begin{itemize}
  \item[(A)] (vectorial case) The exponents $\alpha,\beta$ in \eqref{alphabeta0} satisfy also \eqref{intro:ass2}.
 \item[(B)] (scalar case) $n=1$ and for all $\bb\in\calE_0$ there exists a function $f_\bb:\Omega\times \R\to[0,\infty)$ and constants $c_2<\infty$, $q\in(1,p)$ such that  
  \begin{equation*}
 \begin{split}
 \forall \bb\in \calE_0:\quad&V_\bb(\omega;\cdot)+f_\bb(\omega;\cdot):\R\to[0,+\infty)\mbox{ is convex},\\
 &f_\bb(\omega;r)\leq c_2(1+\lambda_\bb(\omega)(|r|^q+1)).
\end{split}
 \end{equation*}
 \end{itemize}

\end{assumption}

\paragraph{Examples}

\begin{itemize}
 \item \textit{(Lattices)}. A typical example for a graph $(\calL,\calE)$ satisfying the above assumptions is given by the hyper-cubic lattice with finite range interactions, i.e.\ $\calL=\Z^d$, $\calE=\{[x,y]\, |\, x,y\in\Z^d,\, 0<|x-y|<R'\}$ for some $R'\geq1$, and $\calNN=\{[x,x+e_i]\, |\, x\in \Z^d,i=1,\dots,d\}$ the set of nearest-neighbour edges. Notice that, by a change of variables, we may treat arbitrary Bravais lattices, see e.g.\ \cite[Example 5.1 and Example 5.2]{Braides-Gelli-2000}.
\smallskip

We also allow for more complicated lattices: For example, up to an affine transformation, the \textit{Kagome-lattice} in $\R^2$ is given by the set of vertices $\calL=\cup_{i=1}^3(q_i+\Z^2)$ where $q_1=0$, $q_2=\frac12e_1$ and $q_3=\frac12e_2$,  and $\calE_0:=\{[0,\frac12 e_1],[\frac12e_1,e_1],[0,\frac12 e_2],[\frac12e_2,e_2],\frac12[e_1,e_2],[\frac12e_2,e_2-\frac12e_1]\}$.
 
\item \textit{(Potentials)}. For a given graph $(\calL, \calE)$ we can introduce a canonical probability space as follows: Let $(\widetilde\Omega,\widetilde{\mathcal B})$ denote the open interval $\widetilde\Omega:=(0,\infty)\subset \R$  together with its Borel-$\sigma$-algebra $\widetilde {\mathcal B}:=\mathcal B((0,\infty))$. We denote by $(\Omega,{\mathcal F})=(\widetilde\Omega^\calE,\widetilde{\mathcal B}^{\otimes \calE})$ the $\calE$-fold product measure space. Then for $z\in\Z^d$ the maps $\tau_z:\Omega\to\Omega$, $(\tau_z\omega)(\ee):=\omega(\ee+z)$ form a group of measurable mappings. A simple example of a probability measure on $(\Omega,{\mathcal F})$ that turns $\tau$ into a stationary and ergodic dynamical system is the product measure ${\mathbb P}=\widetilde{\mathbb P}^{\otimes\calE}$, where $\widetilde{\mathbb P}$ is an arbitrary probability measure on $(\widetilde\Omega,\widetilde{\mathcal B})$. In that case, the coordinate projections $\{\omega\mapsto\omega(\bb)\}_{\bb\in\calE}$ are independent and identically distributed random variables.
  
A model family of random potentials $\{V(\omega;\ee,\cdot)\}_{\ee\in\calE}$ is given by
\begin{equation*}
  V(\omega;\ee,\cdot):=\omega(\ee)V_{\bb_\ee}(\cdot)=(\tau_{z_\ee}\omega)(\bb_\ee)V_{\bb_\ee}(\cdot),
\end{equation*}
where $(z_\ee,\bb_\ee)\in\Z^d\times\calE_0$ is uniquely defined by $\ee=z_\ee+\bb_{\ee}$ and where we assume that the finite set of potentials $\{V_{\bb}\}_{\bb\in\calE_0}$ satisfies $V_\bb\in C(\R^n,[0,\infty))$ and a standard $p$-growth condition for some $p>1$. Note that in this model case incidentally we have $\lambda_\bb(\omega)=\omega(\bb)$ for $\bb\in\calE_0$.

Let us mention some more specific examples for $V_\bb$ and conditions that ensure Assumptions~\ref{H} and \ref{A:2T}.
\begin{itemize}
\item Set $n=1$ and $V_\bb(r)=r^2$ for all $\bb\in\calE_0$. This corresponds to a random conductance model, cf.~\cite{Biskup}. In this case, the Assumptions~\ref{H} and \ref{A:2T}~(B) are satisfied for $p=2$ if $\mathbb E[\omega(\ee)]<\infty$ for all $\ee \in\calE_0$ and $\mathbb E[\omega(\ee)^{-1}]<\infty$ for all $\ee\in\calNN_0$.
\item  Set $n=1$ and $V_\bb(r)=(r^2-1)^2$. These potentials are not convex but satisfy Assumption~\ref{A:2T}~(B) for $p=4$ (choose $f_\bb(\omega;r):=2\omega(\bb) r^2$ for $\bb\in\calE_0$). Hence, the Assumptions~\ref{H} and \ref{A:2T}~(B) are satisfied for $p=4$ if $\mathbb E[\omega(\ee)],\mathbb E[\omega(\ee)^{-\frac13}]<\infty$ for $\ee\in\calE_0$.
\item Set $n=d=2$ and consider the lattice graph $(\Z^2,\calE)$ with the generating edge set $\calE_0=\{\pm e_1,\pm e_2,\pm (e_1+e_2),\pm (e_1-e_2)\}$. Choosing $V_\bb(r)=(|r|-1)^2$, $r\in\R^2$, the energy $E_\e(\omega;u,A)$ corresponds to the elastic energy of a deformation $u$ of the weighted lattice $\e\Z^d\cap A$. The Assumptions~\ref{H} and \ref{A:2T}~(A) are satisfied for $p=2$ if $\mathbb E[\omega(\bb)^\alpha]<\infty$ and $\mathbb E[\omega(\bb)^{-\beta}]<\infty$ for all $\bb\in\calE_0$ where $\alpha>1$, $\beta\geq1$ and $\frac1\alpha+\frac1\beta\leq 1$.
\end{itemize}
\end{itemize}
\begin{remark}
  In this work, we consider $\Z^d$-periodic graphs, but the analysis directly extends to general Bravais (multi) lattices. An interesting extension of our analysis is to consider stationary and ergodic random lattices as in \cite{ACG11}. In this context, two settings might be distinguished: If the geometry of the random lattice is regular in the sense of approximation theory (e.g. in the sense of \cite[Definition 13]{ACG11}), we expect that the analysis of this paper can be adapted without major difficulties. On the other hand, it is an interesting and open question to which extend our result is valid for random lattices with ``degenerate geometry'', which e.g. might feature regions with a low or very high density of vertices.
\end{remark}

\section{Main result}

The main theorem of this paper is a homogenization result for the energy $E_\e$, cf. \eqref{ene:pair}. 
\begin{theorem}\label{T:1}
Suppose that Assumptions~\ref{H} and \ref{A:2T} are satisfied. Then there exists a continuous energy density $W_\ho:\R^{n\times d}\to[0,\infty)$ satisfying the standard $p$-growth condition
\begin{equation*}
  \exists {c_1}'>0\,\forall F\in\R^{n\times d}\,:\qquad
 \frac1{{c_1}'}|F|^p-{c_1}'\leq W_\ho(F)\leq {c_1}'(1+|F|^p),
\end{equation*}
and there exists a set $\Omega'\subset\Omega$ with $\mathbb P(\Omega')=1$ such that for all $\omega\in\Omega'$ the following properties hold:\\
For all bounded Lipschitz domains $A\subset \R^d$ the sequence of functionals $(E_\e(\omega;\cdot,A))_\e$ $\Gamma$-converges with respect to the $L^{\frac{\beta}{\beta+1}p}(A)$-topology as $\e\downarrow0$ to the functional $E_\ho(\cdot,A)$ given by
\begin{equation*}
 E_\ho(u,A):=\begin{cases}\displaystyle{\int_A} W_\ho(\nabla u(x))\, dx&\mbox{if $u\in W^{1,p}(A,\R^n)$,}\\ \infty&\mbox{else.}\end{cases}
\end{equation*}
\end{theorem}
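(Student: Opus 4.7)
\textbf{Proof plan for Theorem~\ref{T:1}.} The plan is to follow the classical Dal Maso--Modica scheme adapted to the discrete, degenerate setting: define the homogenized density via a multi-cell formula, pass to the limit using the Akcoglu--Krengel subadditive ergodic theorem, and then establish matching $\Gamma$-liminf and $\Gamma$-limsup inequalities. The moment conditions \eqref{alphabeta0} and Assumption~\ref{A:2T} enter crucially in the coercivity estimate and in the gluing step.

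\textbf{Step 1: Cell formula and definition of $W_\ho$.} For $F\in\R^{n\times d}$ and a cube $kY$ (with $k\in\N$), define
\begin{equation*}
  \mu_\omega(F, kY) := \inf\bigl\{\, E_1(\omega; u, kY)\,:\, u\in\calA^1,\ u(x)=Fx \text{ on }\R^d\setminus (kY)_{-R}\,\bigr\}.
\end{equation*}
Standard considerations show that $(A,\omega)\mapsto \mu_\omega(F,A)$ is subadditive in the cube argument, and the moment condition $\mathbb E[\lambda_\bb^\alpha]<\infty$ combined with the test function $u(x)=Fx$ yields $\mathbb E[\mu_\cdot(F,Y)]<\infty$. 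Akcoglu--Krengel and the ergodicity of $\tau$ then produce, on a set $\Omega'$ of full measure, a deterministic limit $W_\ho(F):=\lim_{k\to\infty}k^{-d}\mu_\omega(F,kY)$. By a diagonal argument $\Omega'$ can be chosen independent of $F$ (using continuity in $F$ obtained from the upper bound in \eqref{ass:V:1}).

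\textbf{Step 2: Growth and coercivity for $W_\ho$.} The upper bound $W_\ho(F)\leq c(1+|F|^p)$ follows by testing with $u(x)=Fx$ and using $\mathbb E[\lambda_\bb]<\infty$. The lower bound $W_\ho(F)\geq c^{-1}|F|^p - c$ is the delicate part: for any admissible competitor on $kY$, one applies the discrete Poincaré inequality on $\calA^1$, estimates $\sum|\nabla u|^p$ by Hölder against $\lambda^{-\beta}$ and $\lambda|\nabla u|^p$, then invokes the ergodic theorem applied to $\lambda_\bb^{-\beta}$ to control the weight. This is the content of the non-degeneracy lemma (Lemma~\ref{L:coercivity}) and gives the claimed $p$-growth of $W_\ho$.

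\textbf{Step 3: Liminf inequality.} Assume $u_\e\to u$ in $L^{\frac{\beta}{\beta+1}p}(A)$ with $\liminf E_\e(\omega;u_\e,A)<\infty$. The coercivity from \eqref{ass:V:1} and Hölder (with the ergodic theorem applied to $\lambda_\bb^{-\beta}$) show that the piecewise-affine interpolants are bounded in $W^{1,\frac{\beta}{\beta+1}p}$ on compactly contained subdomains, which upgrades the $L^{\frac{\beta}{\beta+1}p}$ convergence to weak convergence of the gradients and forces $u\in W^{1,p}(A,\R^n)$. I would then localize via the blow-up method of Fonseca--M\"uller: for a.e.\ Lebesgue point $x_0$ of $\nabla u$ and a small cube $Q_\rho(x_0)$, use the minimality property of the cell problem, compare with affine boundary data $Fx$ (where $F=\nabla u(x_0)$), and pass to the limit using Step~1. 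Quasiconvexity of $W_\ho$ in the vectorial case emerges from this argument.

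\textbf{Step 4: Recovery sequence (limsup inequality).} For $u$ affine with $\nabla u=F$, take $u_\e$ to be the rescaling of a near-optimal minimizer of $\mu_\omega(F,\e^{-1}A)$; Step~1 gives the correct energy density. For $u$ piecewise affine, apply this on each piece and \emph{glue} the local minimizers across interfaces using Lemma~\ref{L:glue} (scalar case, via truncation in the spirit of \cite{CS79}) or Lemma~\ref{L:glue:convex} (vectorial case, via the Rellich compactness from Lemma~\ref{L:interpolation2}), paying a vanishing transition cost controlled by $\mathbb E[\lambda^\alpha]$ on the thin gluing layer. Boundary values $g$ are then imposed using \eqref{def:Aepsg} in the same way. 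A density argument extends to general $u\in W^{1,p}$: approximate $u$ in $W^{1,p}$ by piecewise affine functions, construct recovery sequences by a diagonal procedure, and use the continuous $p$-growth of $W_\ho$ to pass to the limit.

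\textbf{Main obstacle.} The hardest part will be the gluing in Step~4 in the vectorial case at the critical exponent $\frac1\alpha+\frac1\beta=\frac{p}{d}$. Without a truncation argument (unavailable for $n>1$), one must replace two near-optimal configurations on adjacent cubes by a single configuration matching given boundary data while controlling the energy on the transition layer. This forces the use of a weighted Poincar\'e inequality together with a two-scale / ergodic argument (Lemma~\ref{L:interpolation2}) to produce a compact embedding in the weighted Sobolev scale. A secondary obstacle is verifying that the random set $\Omega'$ of full measure on which all statements hold can be chosen \emph{independent} of the domain $A$, the slope $F$, and the boundary datum $g$; this is handled by separability arguments combined with the continuity of $W_\ho$ and $g\mapsto$ (boundary-value minimum) established along the way.
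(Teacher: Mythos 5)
Your overall architecture matches the paper's (cell formula via Akcoglu--Krengel, blow-up for the liminf, affine recovery plus gluing plus density for the limsup), but there is a genuine gap in Step~1. You claim the exceptional set $\Omega'$ can be chosen independent of $F$ ``by a diagonal argument, using continuity in $F$ obtained from the upper bound in \eqref{ass:V:1}.'' This does not work: the growth bound \eqref{ass:V:1} gives no modulus of continuity of $r\mapsto V_\bb(\omega;r)$, hence no quantitative control of how $m_F(\omega;kY)$ varies with $F$, and therefore no a priori continuity of $W_0$. The paper flags exactly this point in Remark~\ref{R:lipschitz} (the situation would be simple \emph{if} the $p$-Lipschitz condition \eqref{Vplipschitz} held, which is not assumed) and in Remark~\ref{R:Omega0} (the ergodic theorem only gives a good set for each fixed $F$, and one first restricts to $F\in\Q^{n\times d}$). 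Continuity of $W_0$ is then a genuinely nontrivial fact, proved separately as Proposition~\ref{P:condwhom}, and its proof requires the affine recovery sequence (Lemma~\ref{L:affine}) combined with the gluing lemmas. This continuity is used in two places your plan does not make visible: in the blow-up lower-bound argument, where one needs $W_0(F_j)\to W_0(\nabla u(x_0))$ along rational slopes $F_j$, and in Step~1 of Proposition~\ref{P:sup}, to pass from rational to arbitrary slopes in the recovery construction. Without establishing this continuity through the gluing constructions first, the rest of your plan is incomplete.

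A minor but notable confusion: you have the two gluing lemmas swapped. In the paper Lemma~\ref{L:glue} (Gluing~I) is the \emph{vectorial} construction that uses the weighted Poincar\'e/Rellich machinery of Lemma~\ref{L:interpolation2}, while Lemma~\ref{L:glue:convex} (Gluing~II) is the \emph{scalar} truncation construction in the spirit of \cite{CS79}. You attribute the mechanisms to the correct cases (truncation $\leftrightarrow$ scalar, compact embedding $\leftrightarrow$ vectorial) but cite the opposite lemma labels. Likewise, the ``non-degeneracy lemma'' giving the $p$-growth of $W_0$ is Lemma~\ref{L:indwhom2}, not Lemma~\ref{L:coercivity} (the latter is the compactness statement).
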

In Section~\ref{S:outline} below we outline the proof of Theorem~\ref{T:1}, which is split into several lemmas and propositions. In particular, Theorem~\ref{T:1} directly follows from Proposition~\ref{P:inf}, which yields the $\Gamma$-liminf inequality, and \ref{P:sup}, which asserts the existence of a recovery sequence.

\medskip

The homogenized energy density $W_{\hom}$ can be characterized by means of asymptotic formulas. 
\begin{lemma}\label{L:Whomper}
Consider the situation of Theorem~\ref{T:1}. For all $F\in\R^{n\times d}$ we have 
\begin{equation*}
 W_\ho(F)=\lim_{k\uparrow\infty}\mathbb E\left[W_\ho^{(k)}(\cdot;F)\right],
\end{equation*}
where $W_\ho^{(k)}:\Omega\times\R^{n\times d}\to[0,\infty)$, $k\in\N$, is defined by
\begin{equation}\label{Whomperkdef}
 W_\ho^{(k)}(\omega;F):=\inf\left\{\frac1{k^d}E_1(\omega;g_F+\phi,kY)\ |\ \phi\in\calA_\#(kY)\right\},
\end{equation}
with $g_F(x)=Fx$ for all $x\in\R^d$ and 
\begin{equation}\label{def:Aper}
 \calA_\#(kY)=\{\phi\in\calA^1\, |\, \mbox{$\phi$ is $k\Z^d$-periodic}\}.
\end{equation}
\end{lemma}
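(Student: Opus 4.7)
My plan is to sandwich $\mathbb E[W_\ho^{(k)}(\cdot;F)]$ between two quantities both converging to $W_\ho(F)$. The key intermediate object is the Dirichlet cell formula
\[
\mu_F(\omega,A):=\inf\{\,E_1(\omega;v,A)\,:\,v\in\calA^1_{g_F}(A)\,\},
\]
which is (almost) subadditive on disjoint sets and bounded in $L^1(\Omega)$ by the $\phi\equiv 0$ test and Assumption~\ref{H}. Akcoglu-Krengel's subadditive ergodic theorem then yields $k^{-d}\mu_F(\omega,kY)\to W^*(F)$ almost surely and in $L^1(\Omega)$, with $W^*$ deterministic by ergodicity. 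The identification $W^*(F)=W_\ho(F)$ comes from combining the $\Gamma$-convergence of Theorem~\ref{T:1} with the standard convergence of infima under $\Gamma$-convergence subject to fixed Dirichlet data (equi-coercivity provided by Lemma~\ref{L:coercivity}): after rescaling $\e=1/k$, $k^{-d}\mu_F(\omega,kY)$ is exactly the infimum of $E_{1/k}(\omega;\cdot,Y)$ over $\calA^{1/k}_{g_F}(Y)$, and its limit $\inf\{E_\ho(v,Y):v-g_F\in W_0^{1,p}(Y,\R^n)\}$ reduces to $W_\ho(F)$ by quasiconvexity of the $\Gamma$-limit's density. The same $\phi\equiv 0$ test also yields the uniform bound $\mathbb E[W_\ho^{(k)}(\cdot;F)]\le C(1+|F|^p)$, which justifies every expectation taken below.

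\medskip

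\emph{Upper bound} $\bigl(\limsup_k\mathbb E[W_\ho^{(k)}(\cdot;F)]\le W_\ho(F)\bigr)$: any Dirichlet-admissible $v=g_F+\phi$ with $\phi$ supported in $(kY)_{-R}$ admits a $k\Z^d$-periodic zero-extension $\tilde\phi:=\sum_{z\in k\Z^d}\phi(\cdot-z)\in\calA_\#(kY)$ with disjoint-support summands (the $R$-thick Dirichlet layer prevents overlap), and $E_1(\omega;g_F+\tilde\phi,kY)=E_1(\omega;v,kY)$. Hence $W_\ho^{(k)}(\omega;F)\le k^{-d}\mu_F(\omega,kY)$; taking expectation and sending $k\to\infty$ closes the upper half.

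\medskip

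\emph{Lower bound} $\bigl(\liminf_k\mathbb E[W_\ho^{(k)}(\cdot;F)]\ge W_\ho(F)\bigr)$: I pick a measurable near-minimizer $\phi_k:\Omega\to\calA_\#(kY)$ with
\[
k^{-d}E_1(\omega;g_F+\phi_k(\omega),kY)\le W_\ho^{(k)}(\omega;F)+1/k
\]
and, for $m\in\N$, construct a Dirichlet competitor $V_{k,m}(\omega;\cdot)\in\calA^1_{g_F}(mkY)$ by placing $g_F+\phi_k(\tau_{kz}\omega;\,\cdot-kz)$ on each tile $kz+kY$, $z\in\Z^d\cap mY$, and using the gluing Lemmas~\ref{L:glue}/\ref{L:glue:convex} in an $R$-thick transition layer around every interior cell boundary and around $\partial(mkY)$ to match $g_F$. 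Its energy satisfies
\[
E_1(\omega;V_{k,m},mkY)\le\sum_{z\in\Z^d\cap mY}k^d W_\ho^{(k)}(\tau_{kz}\omega;F)+G_{k,m}(\omega),
\]
with $G_{k,m}$ absorbing the near-minimality slack $m^dk^{d-1}$ together with the gluing cost, satisfying $(mk)^{-d}\mathbb E[G_{k,m}]\le C/k+o_m(1)$. Dividing by $(mk)^d$, applying Birkhoff's pointwise ergodic theorem for the $k\Z^d$-action (its limit is the conditional expectation on the $k\Z^d$-invariant sub-$\sigma$-algebra $\calI_k$, averaging back to $\mathbb E[\,\cdot\,]$ upon integration), using $(mk)^{-d}\mu_F(\cdot,mkY)\to W_\ho(F)$ in $L^1$, and taking expectations yields $W_\ho(F)\le\mathbb E[W_\ho^{(k)}(\cdot;F)]+C/k$; letting $k\to\infty$ finishes.

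\medskip

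The \emph{main obstacle} is the stitching step: one must glue the $\omega$-dependent periodic correctors $\phi_k(\tau_{kz}\omega;\cdot)$ from adjacent $k$-cells and match $g_F$ on $\partial(mkY)$ while keeping the boundary/gluing energy of order $k^{d-1}$ per tile (hence $O(1/k)$ per unit volume, uniformly in $m$). In the scalar regime this is achievable by the truncation variant of \cite{CS79}; in the vectorial regime no truncation is available, and the construction rests instead on the weighted Rellich-type compactness of Lemma~\ref{L:interpolation2}, which is precisely where the moment condition \eqref{intro:ass2} is consumed.
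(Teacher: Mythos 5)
Your upper bound is correct and coincides with the paper's argument: a compactly supported corrector in $\calA_0^1(kY)$ extends periodically to a competitor in $\calA_\#(kY)$ without changing the energy on $kY$, so $W_\ho^{(k)}(\omega;F)\le k^{-d}m_F(\omega;kY)$, and the subadditive ergodic theorem plus dominated convergence finish.

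Your lower bound, however, takes a different route from the paper and has a genuine gap in exactly the place you flag as the ``main obstacle''. You want to tile $mkY$ with the shifted periodic near-minimizers $g_F+\phi_k(\tau_{kz}\omega;\cdot-kz)$ and glue to $g_F$ in $R$-thick layers around every interior cell face, with gluing cost $O(1/k)$ per unit volume. This is not achievable with Lemmas~\ref{L:glue} or \ref{L:glue:convex}. The penalty term in Lemma~\ref{L:glue} is of the form $\tfrac{1}{m}\bigl(\tfrac{m}{\delta}\bigr)^p\|u-\overline u\|_{L^p}^p$, and here $u-\overline u=\phi_k$. Because $\phi_k\in\calA_\#(kY)$ has zero mean and $L^p$-gradient bounded per unit volume, Poincar\'e on $kY$ only gives $\|\phi_k\|_{L^p(kY)}^p\lesssim k^{p+d}$, i.e.\ amplitude $O(k)$. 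With a transition layer of thickness $\delta\lesssim k$ (necessarily inside the cell) the penalty per unit volume is at least of order $m^{p-1}$, a \emph{constant}; if one shrinks $\delta$ to tame the boundary-strip term $|A\setminus(A)_{-\delta}|$ per unit volume, the penalty only worsens. So the per-unit-volume gluing error does not vanish as $k\to\infty$, and in fact typically diverges like $k^{p-1}$. (There is also a secondary issue: the gluing lemmas are stated as $\limsup_{\e\downarrow 0}$ bounds for a fixed domain $A$, whereas your construction works at scale $1$ on growing domains $kY$, $mkY$; a non-asymptotic variant with domain-independent constants would have to be extracted first.) The weighted Rellich compactness of Lemma~\ref{L:interpolation2}, which you invoke in passing, does not repair this: it controls $u_\e\to u$ in a weighted $L^q$ sense, not the amplitude of the periodic corrector relative to the cell size.

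The paper's lower bound sidesteps the amplitude problem entirely by rescaling. Choosing $\phi_k$ minimizing with $\fint_{kY}\phi_k=0$, one sets $\varphi_k(\cdot)=k^{-1}\phi_k(k\cdot)$ so that $W_\ho^{(k)}(\omega;F)=E_{1/k}(\omega;g_F+\varphi_k,Y)$ and $\varphi_k$ has \emph{bounded} $W^{1,\frac{\beta}{\beta+1}p}(Y)$-norm; weak compactness yields a periodic limit $\varphi$, the $\Gamma$-liminf inequality (Proposition~\ref{P:inf}) gives $\liminf_k W_\ho^{(k)}(\omega;F)\ge\int_Y W_0(F+\nabla\varphi)\,dx$, and quasiconvexity of $W_0$ together with periodicity of $\varphi$ gives $\int_Y W_0(F+\nabla\varphi)\ge W_0(F)$. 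No gluing is needed for this direction. If you want to keep your tiling strategy, you would have to replace the crude ``glue to $g_F$'' step by something that exploits the zero-mean structure of $\phi_k$ (e.g.\ a good-slice selection) and still survive the degenerate weights; as written, the $O(1/k)$ claim for $G_{k,m}$ is unjustified.
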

(For the proof see Section~\ref{S:Whomper}.)
\medskip

Theorem~\ref{T:1} comes in hand with a compactness statement (cf. Corollary~\ref{C:1} and Lemma~\ref{L:coercivity}), saying that the sequence of energies $E_\e(\omega;\cdot, A)$ (when restricted to functions with prescribed Dirichlet boundary value or functions with prescribed mean) is equicoercive in the weak topology of $W^{1,\frac{\beta}{\beta+1}p}(A)$. Hence, the convergence in Theorem~\ref{T:1} holds for any $L^q$ into which $W^{1,\frac{\beta}{\beta+1}p}$ \textit{compactly} embeds. In fact, we can further improve the convergence, by appealing to the fact that our energy controls a weighted $L^p$-norm of the gradient with a weight that is stationary and ergodic. The upcoming lemma is of particular interest in the critical case $\frac1q=\frac{\beta+1}{\beta}\frac{1}{p}-\frac1d$, i.e. when $W^{1,\frac{\beta}{\beta+1}p}(A)$ is not compactly (but only continuously) embedded into $L^q_{\loc}(A)$:
\begin{lemma}[Compactness]\label{L:compactness}
  Suppose Assumption~\ref{H} is satisfied. Fix $\omega\in\Omega_0$, which is a set of full measure that is introduced below, cf. Remark~\ref{R:Omega0}. Fix a bounded Lipschitz domain $A\subset \R^d$ and consider a sequence $(u_\e)$ satisfying
  \begin{equation}\label{ass:coer}
    \limsup_{\e\downarrow0} E_\e(\omega;u_\e,A)<\infty.
  \end{equation}
  Then $u_\e\wto u$ weakly in $L^1(A,\R^n)$ implies  $u_\e\to u$ strongly in $L^q_{\loc}(A,\R^n)$ for all
    $1\leq q< \infty$ satisfying
    \begin{equation}\label{ass:on:q}
      \left\{\begin{aligned}
          \frac{1}{q}&\geq \frac{\beta+1}{\beta}\frac{1}{p}-\frac{1}{d}&&\text{if }\beta<\infty,\\
          \frac{1}{q}&> \frac1p-\frac{1}{d}&&\text{if }\beta=\infty.
        \end{aligned}\right.
    \end{equation}
\end{lemma}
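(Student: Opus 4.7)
My plan is: (i) derive a uniform weighted $L^p$-bound on the discrete gradient of $u_\e$ from the energy bound and the coercivity of Assumption~\ref{H}; (ii) convert it into an unweighted $L^{\frac{\beta}{\beta+1}p}$-bound via H\"older's inequality and Birkhoff's ergodic theorem; (iii) invoke Rellich--Kondrachov in the subcritical range; (iv) handle the critical endpoint through a weighted Sobolev--Poincar\'e inequality on intermediate-scale cubes.

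For (i), applying the lower bound in Assumption~\ref{H} to the edges $\bb\in\calNN_0$ (where the growth from below is effective) and using \eqref{ass:coer} gives
\begin{equation*}
\e^d\sum_{z\in\e\Z^d\cap A}\sum_{\bb\in\calNN_0}\lambda_\bb(\tau_{z/\e}\omega)\,|\partial_\bb^\e u_\e(z)|^p\le C.
\end{equation*}
Set $r:=\tfrac{\beta}{\beta+1}p$ and fix $A'\Subset A$. For step (ii), write $|\partial_\bb^\e u_\e|^r=(\lambda_\bb|\partial_\bb^\e u_\e|^p)^{r/p}\lambda_\bb^{-r/p}$ and apply H\"older with dual exponents $p/r$ and $\beta+1$ (noting that $\tfrac{r}{p-r}=\beta$). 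The weighted bound above, combined with the fact that Birkhoff's theorem on $\omega\in\Omega_0$ yields $\e^d\sum_{z\in\e\Z^d\cap A'}\lambda_\bb(\tau_{z/\e}\omega)^{-\beta}\to|A'|\,\mathbb E[\lambda_\bb^{-\beta}]<\infty$, produces $\e^d\sum_{z\in\e\Z^d\cap A'}\sum_{\bb\in\calNN_0}|\partial_\bb^\e u_\e(z)|^r\le C$. Lemma~\ref{L:sumint} then upgrades this to a uniform bound on $\|\nabla u_\e\|_{L^r((A')_{-\e R})}$. The case $\beta=\infty$ is simpler: $\lambda_\bb$ is bounded below almost surely, so $r=p$ follows directly.

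Combined with $\|u_\e\|_{L^1(A)}\le C$ (from the assumed weak $L^1$-convergence) and Poincar\'e--Wirtinger, this makes $(u_\e)$ uniformly bounded in $W^{1,r}_{\loc}(A)$. In the subcritical range $\tfrac{1}{q}>\tfrac{1}{r}-\tfrac{1}{d}$, Rellich--Kondrachov then yields relative compactness in $L^q_{\loc}(A)$; uniqueness of the weak $L^1$-limit identifies the limit as $u$, so the whole sequence converges strongly to $u$ in $L^q_{\loc}$. This already covers the full range \eqref{ass:on:q} when $\beta=\infty$.

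The remaining case, $\beta<\infty$ with the equality $\tfrac1q=\tfrac{\beta+1}{\beta p}-\tfrac1d$, is critical: here $W^{1,r}\hookrightarrow L^q$ is only continuous and not compact, so the $W^{1,r}$-bound alone is insufficient. My strategy is to establish a weighted Sobolev--Poincar\'e estimate on intermediate-scale cubes $Q_\delta$ with $\e\ll\delta\ll1$, controlling $\|u_\e-(u_\e)_{Q_\delta}\|_{L^q(Q_\delta)}$ by the weighted discrete $L^p$-norm of $\nabla u_\e$ over $Q_\delta$, with a prefactor that Birkhoff's theorem controls through $\mathbb E[\lambda_\bb^{-\beta}]^{1/(\beta+1)}$. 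Summing over a tiling of $A'$ by such cubes produces equi-integrability of $|u_\e|^q$, which combined with the a.e.\ convergence extracted from the subcritical case and Vitali's theorem promotes the convergence to strong $L^q_{\loc}$. The main obstacle is precisely this last step: proving the discrete weighted Sobolev--Poincar\'e inequality at the critical exponent with the right $\e$-scaling and a constant effectively bounded via the ergodic theorem. This is the content of the Rellich-type compactness alluded to in the introduction as Lemma~\ref{L:interpolation2}.
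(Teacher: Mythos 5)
Your plan reproduces the paper's argument in the subcritical range and correctly identifies that the crux is the critical endpoint, where the paper's Lemma~\ref{L:interpolation2} (a two-scale argument based on Lemma~\ref{L:interpolation}) carries the load. Steps (i)--(iii) are exactly what the paper does via Lemma~\ref{L:coercivity} (Step 1) and Rellich. The comments below concern step (iv), where you diverge from the paper's route.

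The paper does not argue through equi-integrability and Vitali. Instead, it approximates $u$ by a smooth $v$ in $L^q$, applies Lemma~\ref{L:interpolation2} to $u_\e-v$ to obtain a $\limsup$-type upper-semicontinuity estimate of the form $\limsup_\e\int_{A'}|u_\e-v|^q\,\big(\sum_\bb\overline\lambda_\bb\big)\leq C\int_A|u-v|^q$, and then uses the modified weights $\overline\lambda_\bb$ (truncated at $1$, with the non-$\calNN$ weights shifted by $+1$) to guarantee $\sum_\bb\overline\lambda_\bb\geq1$ pointwise. That truncation trick is not cosmetic: Lemma~\ref{L:interpolation2} presupposes the vectorial condition \eqref{alphabetaqp}, in particular $\alpha>1$, which Assumption~\ref{H} alone does not give; replacing $\lambda_\bb$ by $\overline\lambda_\bb$ with $\overline\alpha=\infty$ is precisely how the paper sidesteps this. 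Your proposal avoids this difficulty differently: by targeting the \emph{unweighted} $L^q$-norm of $u_\e-(u_\e)_{Q_\delta}$, the scale-invariant critical Sobolev--Poincar\'e plus one H\"older step (introducing the $\tilde m_{\e,Q,\beta}$-factor) suffices, so you need no condition on $\alpha$. This is a genuine simplification over the paper's route, but make sure you are not invoking Lemma~\ref{L:interpolation} as stated, since its proof uses H\"older with exponent $\alpha\in(1,\infty)$.

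The ``equi-integrability of $|u_\e|^q$'' framing deserves more care. What the two-scale mechanism gives you is control of $\limsup_\e\sum_i\|u_\e-(u_\e)_{Q_i}\|_{L^q(Q_i)}^q$ by a positive power of $\delta$ (using that the weighted energy is superadditive over the tiles, and that the unweighted gradient mass on each tile is $\lesssim\delta^{d/(\beta+1)}$ after Birkhoff); combined with the convergence of the tile-averages (from weak $L^1$ convergence on finitely many cubes per fixed $\delta$) this yields $\limsup_\e\|u_\e-u\|_{L^q(A')}^q\lesssim\delta^{\text{pos.\ power}}$ directly, with no need to pass through Vitali. If you insist on the Vitali route, note that the uniform integrability you extract is only of the ``$\limsup_\e$'' form (the Birkhoff averages only settle after $\e\downarrow0$), so you would need to phrase Vitali for the tail of the sequence; this is fine but an extra technicality. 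The direct $\limsup$ estimate is shorter and is what the paper does. Either way the key unproven step -- that the tile-wise fluctuation term vanishes as $\delta\downarrow0$ even though the critical Sobolev constant is scale-invariant -- is exactly the content of Lemma~\ref{L:interpolation2}, as you acknowledge, and the proof requires the careful bookkeeping (finite overlap of the enlarged cubes $(Q_i^\e)_{\e R}$, case distinction on $q\gtrless p$ when redistributing the energy across tiles) carried out there.
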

(For the proof see Section~\ref{S:comp}.)
\medskip

As it is well-known, $\Gamma$-convergence is stable under perturbations by continuous functionals and implies convergence of minima and minimizers under suitable coercivity properties. In the following we make this explicit and establish a compactness and $\Gamma$-convergence result for discrete energies subject to Dirichlet data and with additional \textit{body forces} of the form
\begin{equation*}
  F_\e(u):=\e^{d}\sum_{x\in\e\calL\cap A}f_\e(x)\cdot u(x).
\end{equation*}
We assume that the sequence $f_\e:\e\calL\cap A\to\R^n$ weakly converges to a limit $f\in L^{\frac{q}{q-1}}(A,\R^n)$ in the sense that 
\begin{equation}\label{def:w-conv}
  \left\{\begin{aligned}
      &\lim\limits_{\e\downarrow 0}\e^{d}\sum_{x\in\e\calL\cap A}f_\e(x)\cdot\varphi(x)=\int_A f(x)\cdot\varphi(x)\qquad\text{for all }\varphi\in C(\overline A,\R^n),\\
      &\limsup\limits_{\e\downarrow 0}\e^{d}\sum_{x\in\e\calL\cap A}|f_\e(x)|^{\frac{q}{q-1}}<\infty.
  \end{aligned}\right.
\end{equation}

\begin{corollary}\label{C:1}
In the situation of Theorem~\ref{T:1}, fix $\omega\in\Omega'$, a Lipschitz domain $A\subset\R^d$ and an exponent $1\leq q<\infty$ satisfying \eqref{ass:on:q}. Consider the functional
\begin{equation*}
 J_{\e}(u):=
      \begin{cases}
        H_\e(u)-F_\e(u)&\mbox{if $u\in\mathcal A^\e_g(A)$}\\
        +\infty&\mbox{otherwise},
      \end{cases}
\end{equation*}
where
\begin{equation*}
 H_{\e}(u):=\e^d\sum_{z\in\e\Z^d}\sum_{\bb\in\calE_0\atop z+\e\bb\in \e\calE\cap A} V_\bb(\tau_{\frac{z}\e}\omega;\partial_\bb^\e u(z)).
\end{equation*}

We assume weak convergence of the body forces in the sense of \eqref{def:w-conv}. Then the following properties hold:
    \begin{enumerate}[(a)]
    \item (Coercivity) Any sequence $(u_\e)$ with finite energy, i.e.\
      \begin{equation}\label{ene:bounded:bc}
        \limsup\limits_{\e\downarrow 0}J_{\e}(u_\e)<\infty,
      \end{equation}
      admits a subsequence that strongly converges in $L^q(A,\R^n)$ (and weakly converges in $W^{1,\frac{\beta}{\beta+1}p}(A,\R^n)$) to a limit $u\in g+W^{1,p}_0(A,\R^n)$.
    \item ($\Gamma$-convergence) The sequence $(J_{\e})$ $\Gamma$-converges with respect to strong convergence in $L^q(A)$ to the functional $J_{\ho}$ given by
      \begin{equation*}
        J_{\ho}(u):=\begin{cases}\displaystyle{\int_A} W_\ho(\nabla u(x))-f(x)\cdot u(x)\, dx&\mbox{if $u\in g+W_0^{1,p}(A,\R^n)$,}\\ \infty&\mbox{otherwise,}\end{cases}
      \end{equation*}
      and it holds
      \begin{equation*}
        \liminf_{\e\downarrow0}\min J_{\e}=\min J_{\ho}.
      \end{equation*}
      Moreover, every minimizing sequence $(u_\e)$ strongly converges (up to a subsequence) in $L^q(A,\R^n)$ (and weakly in $W^{1,\frac{\beta}{\beta+1}p}(A,\R^n)$) to a minimizer $u$ of $J_{\ho}$.
    \end{enumerate}
  \end{corollary}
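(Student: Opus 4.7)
The plan is to deduce Corollary~\ref{C:1} from Theorem~\ref{T:1} via the standard machinery of $\Gamma$-convergence under continuous perturbations, together with Sobolev coercivity. Three auxiliary facts enter: the asymptotic negligibility of the boundary-edge discrepancy between $H_\e$ and $E_\e(\omega;\cdot,A)$ on $\mathcal A^\e_g(A)$, the weak-strong continuity of the body-force functional $F_\e$ under the convergence \eqref{def:w-conv}, and the existence of recovery sequences that match the prescribed Dirichlet data, for which the gluing construction of Lemmas~\ref{L:glue}/\ref{L:glue:convex} is tailor-made.

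For the coercivity part~(a), I would start from the lower bound in Assumption~\ref{H} and argue that, after absorbing $\e^d\sum\lambda_\bb(\tau_{z/\e}\omega)$ via the ergodic theorem (using $\alpha\geq 1$), the bounded energy $J_\e(u_\e)$ controls $\e^d\sum_{z,\bb\in\calNN_0}\lambda_\bb(\tau_{z/\e}\omega)|\partial^\e_\bb u_\e(z)|^p$ up to a multiple of $|F_\e(u_\e)|$. A H\"older inequality with conjugate exponents $\tfrac{\beta+1}{\beta}$ and $\beta+1$, combined with the ergodic control of $\e^d\sum\lambda_\bb^{-\beta}$, converts this weighted sum into a uniform bound on $\e^d\sum|\partial^\e_\bb u_\e|^{\frac{\beta}{\beta+1}p}$, which via Lemma~\ref{L:sumint} becomes an $L^{\frac{\beta}{\beta+1}p}(A)$-bound on $\nabla u_\e$ (the boundary-layer contribution is handled using $u_\e=g$ there and the Lipschitz regularity of $g$). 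Poincar\'e's inequality applied to $u_\e-g$ plus the Sobolev embedding encoded in \eqref{ass:on:q} closes the absorption of $|F_\e(u_\e)|\leq\|f_\e\|_{L^{q/(q-1)}}\|u_\e\|_{L^q}$. Extraction of a weak $W^{1,\frac{\beta}{\beta+1}p}$ limit, followed by strong $L^q$ convergence (by Rellich when \eqref{ass:on:q} is strict, by Lemma~\ref{L:compactness} in the critical case), yields a candidate $u\in g+W^{1,p}_0(A,\R^n)$; the improved integrability $u\in W^{1,p}$ comes a posteriori from the $\Gamma$-liminf of Theorem~\ref{T:1} ensuring $E_\ho(u,A)<\infty$.

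For part~(b), I would first observe that on $\mathcal A^\e_g(A)$ the discrepancy $|H_\e(u)-E_\e(\omega;u,A)|$ involves only $O(\e^{-(d-1)})$ edges crossing $\partial A$, on which $u=g$ is Lipschitz, so the growth bound of Assumption~\ref{H} together with the ergodic theorem forces this discrepancy to vanish uniformly in $u$ with bounded energy. The $\Gamma$-liminf then follows by combining Theorem~\ref{T:1} for $E_\e$ with the weak-strong pairing of $f_\e$ and $u_\e$ guaranteed by \eqref{def:w-conv} and the strong $L^q$ convergence from~(a). For the $\Gamma$-limsup, given $u\in g+W^{1,p}_0(A,\R^n)$, I would take a recovery sequence $v_\e$ from Proposition~\ref{P:sup}, then invoke the gluing Lemma~\ref{L:glue}/\ref{L:glue:convex} to cut $v_\e$ and paste $g$ in a boundary layer of width $O(\e R)$ at asymptotically vanishing energy cost; the boundary-edge estimate above lets me replace $E_\e$ by $H_\e$ in the limit, and \eqref{def:w-conv} identifies $\lim F_\e(v_\e)=\int_A f\cdot u$. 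Convergence of minima and of minimizing sequences is then a standard consequence of $\Gamma$-convergence plus the coercivity of~(a), noting that evaluation of $J_\e$ at the discrete interpolation of $g$ gives a uniform upper bound on $\inf J_\e$.

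The delicate step, in my view, is the $\Gamma$-limsup under Dirichlet data: the recovery sequence supplied by Proposition~\ref{P:sup} does not, in general, match $g$ in the discrete boundary layer, and in the degenerate-growth vectorial setting restoring the boundary data without inflating the energy is precisely what the gluing construction (and hence the moment relation \eqref{intro:ass2}) is designed to accomplish. Everything else is a fairly routine application of general $\Gamma$-convergence principles.
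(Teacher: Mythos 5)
Your proposal follows essentially the same route as the paper's proof: bound the $L^{\frac{\beta}{\beta+1}p}$ gradient norm via the weighted H\"older/ergodic argument, absorb the body-force term by Poincar\'e--Sobolev and $p>1$, invoke Lemma~\ref{L:compactness} for strong $L^q$ convergence, dispose of the $H_\e$--$E_\e$ boundary discrepancy using $u_\e=g$ in the $\e R$-strip, and obtain recovery sequences with Dirichlet data from Proposition~\ref{P:sup} combined with the gluing Lemmas~\ref{L:glue}/\ref{L:glue:convex}. The only cosmetic difference is that you derive $u\in W^{1,p}$ a posteriori from the $\Gamma$-liminf of Theorem~\ref{T:1}, whereas the paper gets it directly through Lemma~\ref{L:coercivity}; both are equivalent since the $\Gamma$-liminf for $u\notin W^{1,p}$ is itself established via that lemma.
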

  (For the convenience of the reader, we give a proof in Appendix~\ref{appendix}.)
  
\begin{remark}
 In Corollary~\ref{C:1}, $H_\e$ can be replaced by $E_\e(\omega;\cdot,A)$ without changing the limit, since both energies only differ close to the boundary. We phrase Corollary~\ref{C:1} in terms of $H_\e$, since this allows an easier comparison with existing results in the discrete-to-continuum literature, see \cite{AC04,ACG11}.
\end{remark}

  \begin{remark}[Homogenization of the Euler-Lagrange equation]
    If the potentials $V_{\bb}(\omega;r)$ are strictly convex and smooth in $r$, then the minimizer of $J_\e$ can be characterized as the unique solution $u_\e\in\calA^\e_g(A)$ to the Euler-Lagrange equation
    \begin{equation*}
      \sum_{z\in\e\Z^d}\sum_{\bb\in\calE_0\atop z+\e\bb\in \e\calE\cap A}V_\bb'(\tau_{\frac{z}{\e}}\omega;\partial_\bb^\e u_\e(z))\partial_\bb^\e v(z)=\sum_{x\in\e\calL\cap A}f_\e(x)\cdot v(x)\qquad\forall v\in\calA_0^\e(A).
    \end{equation*}
    Hence, Corollary~\ref{C:1} can be rephrased as a homogenization result for the discrete elliptic equation above. In the following, we consider the quadratic case, i.e.~$V_{\bb}(\omega;r)=\lambda_{\bb}(\omega)r^2$, only. In this situation, the  homogenized energy density turns out to be quadratic; i.e. it can be written in the form $W_{\hom}(F)=\frac{1}{2}F\cdot\mathbb LF$ for some symmetric, strongly elliptic fourth order tensor $\mathbb L$. By Corollary~\ref{C:1} we conclude that $u_\e\to u$ strongly in $L^q(A)$ where $u\in g+W^{1,2}_0(A)$ is the unique weak solution to 
    \begin{equation*}
      -\nabla\cdot\mathbb L\nabla u=f\qquad\text{in }A.
    \end{equation*}
Elliptic systems with degenerate random coefficients are considered in \cite{Bella-Otto} in a continuum setting. In that paper sublinearity of the (extended) corrector (and thus the homogenization result) is established under the assumption $\frac1\alpha+\frac1\beta<\frac2d$, which is stronger than $\frac1\alpha+\frac1\beta\leq \frac2d$ (our assumption of  Corollary~\ref{C:1}). As mentioned earlier, in the scalar case, the continuum and periodic analogue of the above statement is contained in \cite{CS79,ZP08}, but we are not aware of an extension to the random setting. An exception is the scalar case in dimension $d=2$: In this situation the invariance principle proven in \cite{Biskup} implies the above homogenization result.      
  \end{remark}

  \begin{remark}[Optimality]\label{rem:optimal}
    Assumption~\ref{H} is optimal in the following sense: If condition \eqref{alphabeta0}, i.e. $\alpha\geq1$ and $\beta\geq\frac1{p-1}$, is violated, then the homogenized energy density $W_\ho$ might become degenerate. To illustrate this fact we consider the integer lattice $(\Z^d,\mathbb B^d)$, where $\mathbb B^d=\{[z,z+e_i]\, |\, i\in\{1,\dots,d\},\, z\in\Z^d\}$, and consider for $u:\e\Z^d\to\R$ the quadratic energy functional
    \begin{equation}\label{example:energy}
      E_\e(\omega;u,A):=\e^d\sum_{z\in\e\Z^d\cap A}\sum_{i=1}^d\omega(\tfrac{z\cdot e_1}{\e})|\partial_{e_i}^\e u(z)|^2,
    \end{equation}
    where $\{\omega(x)\}_{z\in\Z}$ are independent and identically distributed $(0,\infty)$-valued random variables. Note that the energy describes a layered medium that is constant in any direction different from $e_1$. Since the energy is convex and quadratic, and the medium is layered, the auxilliary energy densities $W_\ho^{(k)}(\omega;F)$, see \eqref{Whomperkdef}, can be calculated explicitly (e.g. by appealing to the associated Euler-Lagrange equation, which factorizes to one-dimensional equations). Indeed, one can show that for all $\omega\in\Omega$, $k\in\N$ and $\ell\in\R$ we have 
    \begin{align*}
      W_\ho^{(k)}(\omega;\ell e_j)=
      \begin{cases}
        \ell^2\left(\frac{1}{k}\sum_{z=0}^{k-1}\frac{1}{\omega(z)}\right)^{-1}&\text{if }j=1,\\
        \ell^2\frac{1}{k}\sum_{z=0}^{k-1}\omega(z)&\text{else.}
      \end{cases}
    \end{align*}  
    If $\mathbb E[\omega(0)]+\mathbb E[\tfrac{1}{\omega(0)}]<\infty$, then Theorem~\ref{T:1} can be applied. In particular, Assumption~\ref{H} is satisfied (note that $p=2$). On the other hand, if  $\mathbb E[\tfrac{1}{\omega(0)}]=\infty$, then 
    $$\lim_{k\uparrow\infty}\mathbb E[W_\ho^{(k)}(\cdot;\ell e_1)]=0,$$
    and we deduce that $W_{\ho}(\ell e_1)=0$ for all $\ell$. Hence, $W_{\ho}$ is degenerate from below. Likewise, if $\mathbb E[\omega(0)]=\infty$, then 
    $$\lim_{k\uparrow\infty}\mathbb E[W_\ho^{(k)}(\cdot;e_2)]=\infty,$$
    and we deduce that $W_{\ho}(e_2)=\infty$, which means that $W_{\ho}$ does not satisfy the growth condition from above.
    
    To check optimality of the exponent $\beta=\frac1{p-1}$ for general $p>1$, it suffices to replace the quadratic term $|\partial_{e_i}^\e u(z)|^2$ in \eqref{example:energy} by the $p$-th power $|\partial_{e_i}^\e u(z)|^p$. For all $\omega\in\Omega$, $k\in\N$ and $\ell\in\R$, it follows 
    $$0\leq W_\ho^{(k)}(\omega;\ell e_1)\leq \ell^p \left(\frac{1}{k}\sum_{z=0}^{k-1}\omega(z)^{-\frac{1}{p-1}}\right)^{-(p-1)}.$$
    Indeed, choose $\phi(z)=\varphi_k(z\cdot e_1)$ in \eqref{Whomperkdef}, where $\varphi_k$ is the $k\Z$-periodic function satisfying $\varphi_k(x)=\ell (\frac1k\sum_{z=0}^{k-1}\omega(z)^{-\frac1{p-1}})^{-1}\sum_{z=0}^{x-1}\omega(z)^{-\frac{1}{p-1}}-\ell x$ for all $x\in\Z\cap[0,k)$. As above, we obtain that $\mathbb E[\omega^{-\frac1{p-1}}]=\infty$ implies $W_\ho(\ell e_1)=0$ for all $\ell\in\R$. 
    
  \end{remark}

\subsection{Outline of the proof of Theorem~\ref{T:1}}\label{S:outline}

A key ingredient in any result on stochastic homogenization is ergodic theory. We rely on two types of ergodic theorems. The first one is Birkhoff's individual ergodic theorem:
\begin{theorem}[Birkhoff's ergodic theorem, cf.\ {\cite[Section 6.2]{Krengel}}]\label{T:Birk}
  For all $f\in L^1(\Omega)$ there exists a set of full measure $\Omega_f\subset\Omega$ such that for any Lipschitz domain $A\subset\R^d$ with $0<|A|<\infty$ we have
  \begin{equation}\label{eq:T:Birk}
    \lim\limits_{\e\downarrow 0}\e^d\!\!\!\!\sum_{z\in A\cap\e\Z^d}f(\tau_{\frac{z}\e}\omega)=|A|\,\mathbb E[f]\qquad\text{for all }\omega\in\Omega_f.
  \end{equation}  
\end{theorem}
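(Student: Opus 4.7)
The plan is to reduce the statement to the classical multi-parameter Birkhoff theorem (say, for rectangular Følner averages) applied to a countable dense family of cubes, and then to extend the result from cubes to arbitrary Lipschitz domains by an inner/outer approximation argument. The Lipschitz property of $\partial A$ enters only through the bound $|\{x\in\R^d:\dist(x,\partial A)<r\}|\leq C_A r$, which controls the error made in the approximation.

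\textbf{Step 1 (Cubes with rational vertices).} The classical multi-parameter Birkhoff ergodic theorem for the $\Z^d$-action $\tau$, applied to $f$ and $|f|$, yields for every axis-aligned cube $Q\subset\R^d$ with rational vertices a full measure set $\Omega_Q\subset\Omega$ on which
\begin{equation*}
 \lim_{\e\downarrow 0}\e^d\!\!\sum_{z\in Q\cap\e\Z^d}f(\tau_{z/\e}\omega)=|Q|\,\mathbb E[f],\qquad \lim_{\e\downarrow 0}\e^d\!\!\sum_{z\in Q\cap\e\Z^d}|f(\tau_{z/\e}\omega)|=|Q|\,\mathbb E[|f|].
\end{equation*}
(The passage from convergence along a discrete sequence $\e_k=1/k$ to the continuous limit $\e\downarrow 0$ is standard and can be done by bracketing $Q\cap\e\Z^d$ between two lattices on neighbouring scales and absorbing the $O(\e)$ boundary discrepancy.) Let $\mathcal Q$ denote the countable collection of all finite disjoint unions of such cubes and set
$\Omega_f:=\bigcap_{Q\in\mathcal Q}\Omega_Q$,
which still has full measure.

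\textbf{Step 2 (Inner/outer approximation).} Fix a bounded Lipschitz domain $A$ and $\delta>0$. Using the Lipschitz property, choose $K_\delta,U_\delta\in\mathcal Q$ with $K_\delta\subset A\subset U_\delta$ and $|U_\delta\setminus K_\delta|\leq C_A\delta$ (cover $\partial A$ by finitely many dyadic cubes of side $\sim\delta$, whose total volume is $\lesssim\delta$). For $\omega\in\Omega_f$, decompose
\begin{equation*}
 \e^d\!\!\sum_{z\in A\cap\e\Z^d}f(\tau_{z/\e}\omega)=\e^d\!\!\sum_{z\in K_\delta\cap\e\Z^d}f(\tau_{z/\e}\omega)+\e^d\!\!\sum_{z\in (A\setminus K_\delta)\cap\e\Z^d}f(\tau_{z/\e}\omega),
\end{equation*}
bound the modulus of the remainder term by $\e^d\sum_{z\in(U_\delta\setminus K_\delta)\cap\e\Z^d}|f(\tau_{z/\e}\omega)|$, and apply Step 1 to both pieces to obtain
\begin{equation*}
 \limsup_{\e\downarrow 0}\left|\e^d\!\!\sum_{z\in A\cap\e\Z^d}f(\tau_{z/\e}\omega)-|A|\,\mathbb E[f]\right|\leq (|A|-|K_\delta|)\,|\mathbb E[f]|+|U_\delta\setminus K_\delta|\,\mathbb E[|f|].
\end{equation*}

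\textbf{Step 3 (Conclusion).} The right-hand side is $\leq 2C_A\delta(|\mathbb E[f]|+\mathbb E[|f|])$, which tends to $0$ as $\delta\downarrow 0$. This yields \eqref{eq:T:Birk} for every $\omega\in\Omega_f$, and $\Omega_f$ is independent of $A$.

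\textbf{Main obstacle.} The only nontrivial point is producing a single null set that works \emph{simultaneously} for all Lipschitz domains. This is handled by the countable family $\mathcal Q$ together with the synchronous application of Birkhoff to $|f|$, which turns the boundary term into something controlled in $\omega$-uniform fashion; Lipschitz regularity enters to ensure the quantitative boundary bound $|U_\delta\setminus K_\delta|\lesssim\delta$. The reduction of the continuous-parameter limit $\e\downarrow 0$ to the discrete one is a routine interpolation between scales and is not a serious obstruction.
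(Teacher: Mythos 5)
The paper does not prove Theorem~\ref{T:Birk}: it is stated as a known result and attributed to Krengel's book (Section~6.2). There is thus no in-paper proof to compare against. That said, your sketch is a correct and fairly standard reconstruction of how the statement follows from the classical multi-parameter (Wiener--Tempel'man) ergodic theorem, and you correctly identify the only genuinely subtle point, namely producing a single exceptional null set that serves all Lipschitz domains simultaneously. Your solution --- apply Birkhoff to both $f$ and $|f|$ along a countable family of rational-cube unions, then sandwich a given Lipschitz domain by inner and outer members of that family and control the boundary layer using the finite-perimeter bound $|\{x:\dist(x,\partial A)<r\}|\lesssim_A r$ --- is exactly the right mechanism. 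Two minor points worth tightening if this were written out in full: (i) the reduction from the continuous limit $\e\downarrow 0$ to discrete scales deserves one line of detail, most cleanly by observing that $\e^d\sum_{z\in Q\cap\e\Z^d}f(\tau_{z/\e}\omega)=|\tfrac1\e Q|^{-1}|Q|\cdot\bigl(\text{average of }f\text{ over }\Z^d\cap\tfrac1\e Q\bigr)$ up to a boundary correction, so the claim reduces to the ergodic theorem along the regular family of dilated cubes $\tfrac1\e Q$, for which the continuous-parameter version is standard; and (ii) one should note explicitly that $|\partial A|=0$ for Lipschitz $A$, which is what guarantees that inner and outer cube approximants can be chosen with $|U_\delta\setminus K_\delta|\to 0$. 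Neither of these is a gap in substance.
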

Another ingredient is an ergodic theorem for subadditive quantities. Similarly to the continuum case (cf.\ \cite{DMM86,MM94}), we define for all $F\in\R^{n\times d}$ and measurable $A\subset\R^d$ with $|A|<\infty$
\begin{equation*}
  m_F(\omega;A):=\inf\left\{E_1(\omega;g_F+\phi,A)\ |\ \phi\in\calA_0^1(A)\right\},
\end{equation*}
where $g_F$ denotes the linear map $g_F(x)=Fx$.
It turns out that $m_F(\omega;\cdot)$ is subadditive and we derive from a variant of the Ackoglu-Krengel subadditive ergodic theorem, cf.~\cite{ACG11,AK81}:
\begin{lemma}\label{L:indwhom}
Suppose Assumption~\ref{H} is satisfied. For every $F\in\R^{n\times d}$ there exists a set $\Omega_F\subset\Omega$ of full measure such that for all cubes $Q$ satisfying $\bar Q=[a,b]$ with $a,b\in\R^d$ and $\omega\in\Omega_F$: 
\begin{equation}\label{lim1}
  W_0(F):=\inf_{k\in\N}\frac{\mathbb E \left[m_F(\cdot;kY)\right]}{k^d}=\lim_{k\uparrow\infty}\frac{\mathbb E \left[m_F(\cdot;kY)\right]}{k^d}=\lim_{t\uparrow\infty}\frac{m_F(\omega;tQ)}{|tQ|}.
\end{equation}
\end{lemma}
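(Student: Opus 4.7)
My plan is to apply the multi-parameter subadditive ergodic theorem of Akcoglu--Krengel (in the variant used for discrete energies in \cite{ACG11} and for continuum integrals in \cite{DMM86,MM94}) to the family $\{m_F(\omega;\cdot)\}$. This requires verifying three properties: covariance under the dynamical system $\tau$, subadditivity on disjoint decompositions, and integrability of $m_F(\cdot;Y)$.

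Covariance $m_F(\tau_z\omega;A)=m_F(\omega;A+z)$ for $z\in\Z^d$ will follow from an index shift in the definition of $E_1$ combined with the bijection $\phi(\cdot)\mapsto\phi(\cdot-z)$ between $\calA_0^1(A)$ and $\calA_0^1(A+z)$, together with the group law $\tau_{z+z'}=\tau_z\tau_{z'}$. Integrability $\mathbb{E}[m_F(\cdot;Y)]<\infty$ follows by plugging in the trivial competitor $\phi=0$ into the infimum and invoking the upper bound in \eqref{ass:V:1}, since $\alpha\geq 1$ forces $\mathbb E[\lambda_\bb]\leq \mathbb E[\lambda_\bb^\alpha]^{1/\alpha}<\infty$ for each $\bb\in\calE_0$ (and $\calE_0$ is finite).

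The key step is subadditivity. Given a finite disjoint decomposition $A=\bigsqcup_{i=1}^m A_i$, I would fix $\eta>0$ and near-optimal $\phi_i\in\calA_0^1(A_i)$ and form $\phi:=\sum_i\phi_i$, each $\phi_i$ being extended by $0$ to $\R^d$. The crucial geometric observation is that any $\phi_i$ vanishes outside $(A_i)_{-R}$, and that by property (b) of \eqref{def_of_R}, for each $z\in A_i\cap\Z^d$ and $\bb\in\calE_0$ the endpoints $z+x_\bb,\,z+y_\bb$ lie within distance $R$ of $z$. Consequently, any such endpoint falling in some $A_j$ with $j\neq i$ lies at distance at most $R$ from $\R^d\setminus A_j$, hence outside $(A_j)_{-R}$, so $\phi_j$ vanishes there. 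This forces $\partial_\bb\phi(z)=\partial_\bb\phi_i(z)$ (using Remark~\ref{Remark:finiterange0}) and gives the exact splitting $E_1(\omega;g_F+\phi,A)=\sum_i E_1(\omega;g_F+\phi_i,A_i)$. Letting $\eta\downarrow 0$ yields $m_F(\omega;A)\leq\sum_i m_F(\omega;A_i)$.

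With covariance, subadditivity, and integrability established, the Akcoglu--Krengel theorem delivers a $\tau$-invariant random variable $W_F$ and a full-measure set $\Omega_F$ such that $m_F(\omega;tQ)/|tQ|\to W_F(\omega)$ for every $\omega\in\Omega_F$ and every cube $Q$ of the form $\bar Q=[a,b]$; by ergodicity $W_F$ is a.s.\ equal to a deterministic constant, which I name $W_0(F)$. To match this constant with $\inf_k\mathbb E[m_F(\cdot;kY)]/k^d$ and the corresponding limit, I would use that the tiling $kmY=\bigsqcup_{z\in\{0,\ldots,m-1\}^d}(kY+kz)$, combined with subadditivity and stationarity, implies $\mathbb E[m_F(\cdot;kmY)]\leq m^d\,\mathbb E[m_F(\cdot;kY)]$; a standard Fekete-type argument for subadditive sequences then identifies the infimum with the limit, and Fatou's lemma applied to the a.s.\ convergence along $t=k\to\infty$ with $Q=Y$ identifies both with $W_0(F)$. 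The main technical obstacle is the boundary-layer compatibility underlying the subadditivity step; the remaining pieces follow the now-standard template for stochastic homogenization of integral functionals.
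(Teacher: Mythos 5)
Your proof follows essentially the same template as the paper's: you verify that $m_F$ defines a subadditive, stationary, integrable set function on $\calI$ and invoke the Akcoglu--Krengel theorem. Your subadditivity argument --- exploiting that competitors in $\calA_0^1(A_i)$ vanish on the safety zone $A_i\setminus(A_i)_{-R}$, so the patched competitor $\phi=\sum_i\phi_i$ lies in $\calA_0^1(A)$ and satisfies $\partial_\bb^1\phi(z)=\partial_\bb^1\phi_i(z)$ for $z\in\Z^d\cap A_i$, $\bb\in\calE_0$, via Remark~\ref{Remark:finiterange0} --- is exactly the paper's argument.

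There is, however, one gap in your last step. The discrete Akcoglu--Krengel theorem in the form cited (\cite[Theorem~1]{ACG11}) delivers, for $\omega$ in a full-measure set, the convergence $m_F(\omega;kQ)/|kQ|\to W_0(F)$ only along $k\in\N$ and for cubes $Q=[a,b)$ with $a,b\in\Z^d$; it does not directly give the claim for real $t\uparrow\infty$ and arbitrary cubes $\bar Q=[a,b]$ with $a,b\in\R^d$, so you cannot simply attribute that to the theorem. The extension is not automatic because $m_F$ is not monotone in $A$; what is available is the one-sided comparison
\[
m_F(\omega;A)\;\leq\; m_F(\omega;B)\;+\;\sum_{z\in\Z^d\cap A\setminus B}\sum_{\bb\in\calE_0}c_1\bigl(1+\lambda_\bb(\tau_z\omega)(|F|^p+1)\bigr)
\qquad\text{for Lipschitz }B\subset A,
\]
which holds since $\calA_0^1(B)\subset\calA_0^1(A)$. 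The paper's Step~3 uses this to sandwich $tTQ$ (for a suitable dilation $T\geq 1$) between inner and outer integer cubes $t^{\mp}Q_\delta^{\mp}$ with $|Q_\delta^-|/|TQ|\geq 1-\delta$ and $|TQ|/|Q_\delta^+|\geq 1-\delta$, and controls the additional annular sums via Birkhoff's ergodic theorem applied to $\sum_\bb\lambda_\bb$, then sends $\delta\downarrow 0$. You should add this approximation. Finally, your Fekete-type reasoning for $\inf_k=\lim_k$ is correct but redundant: the identification of the almost-sure limit with $\inf_k \mathbb E[m_F(\cdot;kY)]/k^d$ is already part of the Akcoglu--Krengel conclusion.
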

The proof of this statement is rather standard. For the convenience of the reader we present the argument in Appendix~\ref{appendix}.
\begin{remark}\label{Remark:finiterange}

The small non-locality of the discrete energy (i.e.\ $E_\e(\omega;u,A)$ potentially depends on the values of $u$ on the larger set $(A)_{\e R}$ with $R$ defined in \eqref{def_of_R})  makes the construction of suitable subadditive quantities slightly more subtle than in the continuum case. Note that by definition \eqref{def:Aepsg}, we have
\begin{equation*}
  \calA_0^\e(A)=\left\{u\in\calA^\e\, :\, u=0\mbox{ in $\R^d\setminus (A)_{-\e R}$}\,\right\}.
\end{equation*}
Hence, functions in $\calA_0^1(A)$ vanish in a ``safety zone'' close to $\partial A$. This takes care of the small non-locality of the energy $E_\e$. In particular, we have the following \textit{additive} structure: For all $A_1,A_2\subset \R^d$ Lipschitz domains and $u_i\in\calA_0^\e(A_i)$ for $i=1,2$, we have
\begin{equation*}
  A_1\cap A_2=\emptyset\qquad\Rightarrow\qquad
  E_\e(\omega;u_1+u_2,A_1\cup A_2)=E_\e(\omega;u_1,A_1)+E_\e(\omega;u_2,A_2).
\end{equation*}
\end{remark}

For future reference we state a rescaled version of \eqref{lim1}, which is proven in Section~4:
\begin{corollary}\label{C:rep_Whom}
  Suppose Assumption~\ref{H} is satisfied. Let $F\in\R^{n\times d}$ and let $g:\R^d\to\R^n$ denote an affine function with $\nabla g\equiv F$. Then for all $\omega\in\Omega_F$ and all cubes $Q$ satisfying $\bar Q=[a,b]$ with $a,b\in\R^d$ we have
  \begin{equation}\label{lim1e}
    W_0(F)=\lim_{\e\downarrow0}\left(\frac{1}{|Q|}\inf\left\{E_\e(\omega;\varphi,Q)\ |\ \varphi\in\calA^\e_g(Q)\,\right\}\right).
  \end{equation}  
\end{corollary}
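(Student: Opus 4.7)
The strategy is a straightforward rescaling that converts the $\e$-scale problem with affine Dirichlet data $g$ on the fixed cube $Q$ into a scale-$1$ problem with linear data $g_F$ on the dilated cube $\frac{1}{\e}Q$, where Lemma~\ref{L:indwhom} applies directly.

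First, I would reduce to the case $g = g_F$. Since $E_\e(\omega;\cdot,Q)$ depends only on discrete directional derivatives, it is invariant under adding a common constant to $\varphi$ and $g$; writing $g(x) = Fx + c$, the substitution $\varphi \mapsto \varphi - c$ transforms $\calA^\e_g(Q)$ onto $\calA^\e_{g_F}(Q)$ without changing the energy, so I may as well take $g = g_F$ throughout. Next, I would perform the scaling $\psi(y) := \e^{-1}\varphi(\e y)$, which gives a bijection $\calA^\e_{g_F}(Q)\to\calA^1_{g_F}\!\left(\tfrac{1}{\e}Q\right)$ because $(Q)_{-\e R}/\e = (\tfrac{1}{\e}Q)_{-R}$, and if $\varphi = g_F$ outside $(Q)_{-\e R}$ then $\psi = g_F$ outside $(\tfrac{1}{\e}Q)_{-R}$. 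A direct computation using $\partial^\e_\bb\varphi(\e z') = \partial^1_\bb\psi(z')$ for $z'\in\Z^d$ yields
\begin{equation*}
E_\e(\omega;\varphi,Q) \;=\; \e^d\, E_1\!\left(\omega;\psi,\tfrac{1}{\e}Q\right).
\end{equation*}

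Taking infima and writing $\psi = g_F + \phi$ with $\phi\in\calA^1_0(\tfrac{1}{\e}Q)$, this becomes
\begin{equation*}
\inf\bigl\{E_\e(\omega;\varphi,Q) : \varphi\in\calA^\e_g(Q)\bigr\} \;=\; \e^d\, m_F\!\left(\omega;\tfrac{1}{\e}Q\right).
\end{equation*}
Dividing by $|Q| = \e^d\bigl|\tfrac{1}{\e}Q\bigr|$ gives
\begin{equation*}
\frac{1}{|Q|}\inf\bigl\{E_\e(\omega;\varphi,Q) : \varphi\in\calA^\e_g(Q)\bigr\} \;=\; \frac{m_F(\omega;\tfrac{1}{\e}Q)}{|\tfrac{1}{\e}Q|},
\end{equation*}
and since $\tfrac{1}{\e}Q$ is still a cube of the form $[a/\e,b/\e]$, Lemma~\ref{L:indwhom} applied with $t = 1/\e\uparrow\infty$ yields that the right-hand side converges to $W_0(F)$ for every $\omega\in\Omega_F$.

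There is no serious obstacle here: the argument is purely a change of variables together with the constant-shift invariance of the energy. The only thing to double-check is that the boundary-layer thickness scales correctly, i.e.\ that $(Q)_{-\e R}/\e = (\tfrac{1}{\e}Q)_{-R}$, which is an immediate property of the signed distance function; and that the identity $E_\e(\omega;\varphi,Q) = \e^d E_1(\omega;\psi,\tfrac{1}{\e}Q)$ holds, which follows because the same realization $\omega$ appears on both sides (the shift $\tau_{z/\e}\omega$ with $z = \e z'$ becomes $\tau_{z'}\omega$) and the discrete gradients are invariant under the rescaling.
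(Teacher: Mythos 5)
Your proof is correct and follows essentially the same route as the paper: reduce to $g=g_F$ by the constant-shift invariance, perform the rescaling $\psi(y)=\e^{-1}\varphi(\e y)$ to obtain $\inf\{E_\e(\omega;\varphi,Q)\,:\,\varphi\in\calA^\e_g(Q)\}=\e^d\,m_F(\omega;\tfrac1\e Q)$, and apply Lemma~\ref{L:indwhom} with $t=1/\e$. The only difference is that you have written out the change of variables (including the scaling of the safety layer $(Q)_{-\e R}$) explicitly, whereas the paper states it in one line.
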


We are going to prove our main theorem with $W_\ho$ replaced by $W_0$ and show a posteriori that $W_0=W_{\ho}$ (cf.~proof of Lemma~\ref{L:Whomper}).
\medskip

Before we outline the proof of the main theorem, we comment on the exceptional sets in the two ergodic theorems, since this is a slightly subtle issue. 
\begin{remark}\label{R:Omega0}
Both ``good'' sets $\Omega_f$ and $\Omega_F$ in Theorem~\ref{T:Birk} and Lemma~\ref{L:indwhom} depend a priori on $f$ (resp. $F$), but not on  $A$ (resp. $Q$). In the proof of Theorem~\ref{T:1} we apply \eqref{eq:T:Birk} to a finite number of different functions, which are all related to the weight functions $\lambda_\bb$. Therefore, a posteriori  we may find a common ``good'' set $\Omega_0$ of full measure. We denote this set by $\Omega_0$. For the set $\Omega_F$ appearing in the subadditive ergodic theorem, the situation is more subtle. Clearly we can find a common ``good'' set $\Omega_1$ of full measure such that \eqref{lim1} is valid for all $F$ with rational entries. Yet, since we do not assume any Lipschitz continuity of the potentials (which is in contrast to the analysis in \cite[Theorem 1]{DMM86}, \cite[Corollary 3.3]{MM94}), we cannot directly conclude that there exists a set of full measure such that \eqref{lim1} holds for all $F\in\R^{n\times d}$. Note that the latter would directly imply continuity of $W_0$, cf.~Remark~\ref{R:lipschitz}. In our case we prove continuity of $W_0$ by a different argument (cf. Proposition~\ref{P:condwhom} below). We set $\Omega_1:=\bigcap_{F\in\Q^{n\times d}}\Omega_F\cap\Omega_0$, so that for all $\omega\in\Omega_1$
\begin{itemize}
\item  \eqref{eq:T:Birk} is valid for the finite family of functions $f$ mentioned above,
\item  \eqref{lim1} holds true for all $F\in\Q^{n\times d}$.
\end{itemize}
\end{remark}
\medskip

We outline the proof of Theorem~\ref{T:1} and refer to Section~4 for details and the proofs of the lemmas and propositions below. We first observe that thanks to the moment condition in Assumption~\ref{H} the energy density $W_0$ is non-degenerate:
\begin{lemma}\label{L:indwhom2}
Suppose Assumption~\ref{H} is satisfied. Then there exists $c_1'>0$ such that
\begin{equation}
 \frac1{{c_1}'}|F|^p-{c_1}'\leq W_0(F)\leq {c_1}'(1+|F|^p)\quad\mbox{for all $F\in\R^{n\times d}$.}\label{cond:Whom1}
\end{equation}
\end{lemma}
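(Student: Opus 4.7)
The plan is to split the estimate into the two one-sided bounds; both use the moment conditions of Assumption~\ref{H} in an essentially symmetric way, and the point of the exponents $\alpha\ge 1$ and $\beta\ge\frac{1}{p-1}$ is exactly to make the right Hölder inequalities go through.

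\textbf{Upper bound.} I would test the infimum defining $m_F(\omega;kY)$ with $\phi\equiv 0$. Then $u=g_F$ and $\partial^1_\bb g_F(z)=F(y_\bb-x_\bb)/|y_\bb-x_\bb|$ has norm $\le|F|$. The upper bound in \eqref{ass:V:1} together with $\Z^d$-stationarity of the law of $V_\bb(\tau_{\cdot}\omega;r)$ and Jensen's inequality $\mathbb E[\lambda_\bb]\le\mathbb E[\lambda_\bb^\alpha]^{1/\alpha}<\infty$ (valid because $\alpha\ge 1$) give
$$\frac{\mathbb E[m_F(\cdot;kY)]}{k^d}\le\frac{\mathbb E[E_1(\cdot;g_F,kY)]}{k^d}\le c_1|\calE_0|\Bigl(1+\max_{\bb\in\calE_0}\mathbb E[\lambda_\bb]\,(|F|^p+1)\Bigr)\le c_1'(1+|F|^p),$$
uniformly in $k$. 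Taking the infimum over $k$ yields $W_0(F)\le c_1'(1+|F|^p)$.

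\textbf{Lower bound.} Fix $\omega\in\Omega_F\cap\Omega_0$ and $\phi\in\calA_0^1(kY)$, and write $u=g_F+\phi$. Throwing away the edges in $\calE_0\setminus\calNN_0$ in the lower bound of \eqref{ass:V:1} gives
$$E_1(\omega;u,kY)\ge\tfrac{1}{c_1}\!\!\!\!\sum_{z\in\Z^d\cap kY,\,\bb\in\calNN_0}\!\!\!\!\lambda_\bb(\tau_z\omega)|\partial_\bb^1 u(z)|^p-c_1\Lambda_k(\omega),$$
with $\Lambda_k(\omega):=\sum_{z,\bb}\lambda_\bb(\tau_z\omega)$. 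Set $q:=\tfrac{\beta p}{\beta+1}\ge 1$ (equivalent to $\beta\ge\tfrac{1}{p-1}$). Hölder's inequality with conjugate exponents $p/q,\,p/(p-q)$ and the identity $q/(p-q)=\beta$ gives
$$\sum_{z,\bb}\lambda_\bb(\tau_z\omega)|\partial_\bb^1 u(z)|^p\;\ge\;\frac{\bigl(\sum_{z,\bb}|\partial_\bb^1 u(z)|^q\bigr)^{p/q}}{T_k(\omega)^{(p-q)/q}},\qquad T_k(\omega):=\sum_{z,\bb}\lambda_\bb(\tau_z\omega)^{-\beta}.$$
Since $\phi$ vanishes outside $(kY)_{-R}$ (and hence on its boundary by continuity), one has $\int_{(kY)_{-R}}\nabla u\,dx=F|(kY)_{-R}|$; Jensen's inequality for $|\cdot|^q$ then yields $\int_{(kY)_{-R}}|\nabla u|^q\,dx\ge|F|^q|(kY)_{-R}|$, and Lemma~\ref{L:sumint} converts this into the lower bound $\sum_{z,\bb}|\partial_\bb^1 u(z)|^q\ge c_0^{-1}|F|^q|(kY)_{-R}|$ on the numerator above.

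\textbf{Passing to the limit.} Combining the estimates, taking the infimum over $\phi$, and dividing by $k^d$, I would let $k\to\infty$ along the pointwise identity $W_0(F)=\lim_{k\to\infty}m_F(\omega;kY)/k^d$ from \eqref{lim1}. Birkhoff's theorem (Theorem~\ref{T:Birk}) applied to the integrable variables $\lambda_\bb$ and $\lambda_\bb^{-\beta}$ gives $\Lambda_k(\omega)/k^d\to\sum_\bb\mathbb E[\lambda_\bb]$ and $T_k(\omega)/k^d\to\sum_\bb\mathbb E[\lambda_\bb^{-\beta}]$, both finite; moreover $|(kY)_{-R}|/k^d\to 1$. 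The scaling exponents match exactly ($d\cdot p/q-d\cdot(p-q)/q=d$), so the limit delivers $W_0(F)\ge\tfrac{1}{c_1'}|F|^p-c_1'$ for some $c_1'>0$ depending only on $c_0,c_1,p,\beta$ and the moments. The cases $\alpha=\infty$ or $\beta=\infty$ follow from the stated convention with only notational changes. The one mildly delicate point is that a direct $L^1$ estimate of $\mathbb E[m_F(\cdot;kY)]$ is awkward, because the Hölder step produces the random factor $T_k(\omega)^{-(p-q)/q}$ whose expectation is not controlled by Assumption~\ref{H}; working $\omega$-pointwise via \eqref{lim1} and Birkhoff bypasses this difficulty cleanly.
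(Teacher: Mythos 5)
Your proof is correct and follows essentially the same strategy as the paper: test the infimum with $\phi\equiv0$ for the upper bound; for the lower bound, use the coercivity half of \eqref{ass:V:1}, H\"older's inequality to pass from the weighted to an unweighted discrete gradient norm, the mean-zero property of $\nabla\phi_k$ on $(kY)_{-R}$ together with Lemma~\ref{L:sumint}, and finally Birkhoff's theorem along the subadditive limit \eqref{lim1}. The only genuine difference is the choice of H\"older exponents: you work at level $q=\tfrac{\beta p}{\beta+1}$ with conjugate pair $(p/q,\,p/(p-q))$, so the moment condition $\mathbb E[\lambda_\bb^{-\beta}]<\infty$ is invoked directly, whereas the paper works at level $q=1$ with the fixed conjugate pair $(p,\,p/(p-1))$, so only the (weaker and automatically available) moment $\mathbb E[\lambda_\bb^{-1/(p-1)}]<\infty$ is used. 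Both variants produce the same non-degenerate constant and neither is substantially simpler; the paper's version is marginally more economical in the moment it consumes, while yours is in line with the $\tfrac{\beta}{\beta+1}p$-exponent that governs compactness throughout the rest of the paper.
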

By exploiting the boundedness of $\mathbb E[\lambda_\bb^{-\frac{1}{p-1}}]$ for $\bb\in\calNN_0$ we obtain equicompactness of the energy:

\begin{lemma}[Compactness]\label{L:coercivity}
  Suppose Assumption~\ref{H} is satisfied. Fix $\omega\in\Omega_0$, cf. Remark~\ref{R:Omega0}, and a bounded Lipschitz domain $A\subset \R^d$. Let $(u_\e)$ denote a sequence with finite energy, i.e. satisfying \eqref{ass:coer}. Then $u_\e\wto u$ weakly in $L^1(A,\R^n)$ implies $u\in W^{1,p}(A,\R^n)$ and $u_\e\wto u$ weakly in $W_{\loc}^{1,\frac{\beta}{\beta+1} p}(A,\R^n)$.
\end{lemma}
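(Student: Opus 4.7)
The strategy is to convert the energy bound into a weighted $L^p$-bound on the discrete gradient, then use Hölder's inequality together with Birkhoff's ergodic theorem to deduce an unweighted $L^{\frac{\beta}{\beta+1}p}$-bound that yields weak $W^{1,\frac{\beta}{\beta+1}p}_{\loc}$-compactness, and finally upgrade the integrability of the weak limit to $L^p$ by a duality argument.

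\emph{Step 1 (weighted gradient bound).} The lower bound in \eqref{ass:V:1} together with \eqref{ass:coer} and a Birkhoff-type control of the $O(1)$ term $\e^d\sum_z c_1\lambda_\bb(\tau_{z/\e}\omega)$ yields, uniformly in $\e$,
\begin{equation*}
  \e^d\sum_{z\in\e\Z^d\cap A}\sum_{\bb\in\calNN_0}\lambda_\bb(\tau_{z/\e}\omega)\,|\partial_\bb^\e u_\e(z)|^p\leq C.
\end{equation*}

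\emph{Step 2 (unweighted gradient bound).} Set $q:=\tfrac{\beta}{\beta+1}p$ (with the convention $q=p$ if $\beta=\infty$). Writing $|\partial_\bb^\e u_\e|^q=(\lambda_\bb|\partial_\bb^\e u_\e|^p)^{q/p}\lambda_\bb^{-q/p}$ and applying Hölder's inequality with conjugate exponents $p/q=(\beta+1)/\beta$ and $\beta+1$ to the sum $\e^d\sum_z\sum_{\bb\in\calNN_0}$ gives
\begin{equation*}
  \e^d\sum_{z,\bb}|\partial_\bb^\e u_\e(z)|^q\leq\Bigl(\e^d\sum_{z,\bb}\lambda_\bb|\partial_\bb^\e u_\e|^p\Bigr)^{q/p}\Bigl(\e^d\sum_{z,\bb}\lambda_\bb(\tau_{z/\e}\omega)^{-\beta}\Bigr)^{1/(\beta+1)}.
\end{equation*}
On $\Omega_0$, Theorem~\ref{T:Birk} bounds the second factor by $C|A|\sum_{\bb\in\calNN_0}\mathbb E[\lambda_\bb^{-\beta}]<\infty$, so by Lemma~\ref{L:sumint} we obtain $\|\nabla u_\e\|_{L^q((A)_{-\e R})}\leq C'$ uniformly in $\e$.

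\emph{Step 3 (weak compactness).} For any $A'\Subset A$ and all sufficiently small $\e$, $A'\subset(A)_{-\e R}$, so the uniform $L^q(A')$-bound on $\nabla u_\e$, together with $u_\e\wto u$ in $L^1$, yields weak convergence of $u_\e$ in $W^{1,q}(A')$ (up to subsequence; when $q=1$ one argues via a de la Vallée-Poussin / equi-integrability improvement coming from Step~4 applied first). The limit of $\nabla u_\e$ must coincide with the distributional gradient of $u$, which forces the entire sequence to converge.

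\emph{Step 4 (upgrade to $W^{1,p}$).} To show $\nabla u\in L^p(A)$ we argue by duality. For $\varphi\in C_c^\infty(A;\R^{n\times d})$ we rewrite $\int_A\varphi:\nabla u_\e\,dx$ (up to a vanishing error controlled by the piecewise-affine structure) as a discrete bilinear form $\e^d\sum_{z,\bb}\psi_{\bb,\e}(z)\cdot\partial_\bb^\e u_\e(z)$, where $\psi_{\bb,\e}$ is a piecewise-constant sampling of a fixed $C_c^\infty$-field depending on $\varphi$ and the local geometry of $\calT$. The discrete Hölder inequality with conjugate exponents $p,p'$ and weights $\lambda_\bb^{1/p},\lambda_\bb^{-1/p}$ gives
\begin{equation*}
  \Bigl|\e^d\sum_{z,\bb}\psi_{\bb,\e}\cdot\partial_\bb^\e u_\e\Bigr|\leq\Bigl(\e^d\sum_{z,\bb}\lambda_\bb|\partial_\bb^\e u_\e|^p\Bigr)^{1/p}\Bigl(\e^d\sum_{z,\bb}|\psi_{\bb,\e}|^{p'}\lambda_\bb^{-1/(p-1)}\Bigr)^{1/p'}.
\end{equation*}
Since $\beta\geq\tfrac{1}{p-1}$, Jensen's inequality gives $\mathbb E[\lambda_\bb^{-1/(p-1)}]<\infty$, so Theorem~\ref{T:Birk} implies that the second factor converges to $\bigl(\int_A|\varphi|^{p'}\sum_{\bb}\mathbb E[\lambda_\bb^{-1/(p-1)}]\,dx\bigr)^{1/p'}\leq C\|\varphi\|_{L^{p'}(A)}$. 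Passing to the limit (the left-hand side tends to $|\int_A\varphi:\nabla u|$ by Step 3) yields $|\langle\nabla u,\varphi\rangle|\leq C\|\varphi\|_{L^{p'}(A)}$ for all test $\varphi$, and duality concludes $\nabla u\in L^p(A)$.

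\emph{Main obstacle.} The technical heart of the argument is Step~4: identifying the continuum gradient of the piecewise-affine interpolant with the discrete directional differences $\partial_\bb^\e u_\e$ in a way that is \emph{compatible} with the weighted discrete Hölder inequality and amenable to Birkhoff's theorem. A minor secondary subtlety is the non-reflexive case $\beta=\tfrac{1}{p-1}$, where weak $L^1$-compactness of $\nabla u_\e$ is delicate, but this is bypassed since Step~4 produces $\nabla u\in L^p\subset L^q_{\loc}$ directly and pins down the weak limit.
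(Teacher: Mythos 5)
Your overall strategy (weighted gradient bound, H\"older$+$Birkhoff to pass to an unweighted $L^{\frac{\beta}{\beta+1}p}$-bound, weak compactness, then a duality argument for $\nabla u\in L^p$) is the same as the paper's, and Steps~1, 2 and the duality argument in Step~4 are essentially correct.

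The gap is in your Step~3, and it is not a ``minor secondary subtlety'': in the borderline case $\beta=\tfrac1{p-1}$ allowed by Assumption~\ref{H}, the exponent is $\tfrac{\beta}{\beta+1}p=1$ and the target space $W^{1,1}_{\loc}$ is non-reflexive. A uniform $L^1$-bound on $\nabla u_\e$ does \emph{not} give weakly (sub)convergent $\nabla u_\e$ in $L^1_{\loc}$; by Dunford--Pettis this requires equi-integrability of $(\nabla u_\e)$, a property of the \emph{sequence}, which cannot be deduced from integrability of the \emph{limit}. Your proposed bypass --- ``Step~4 produces $\nabla u\in L^p$ and pins down the weak limit'' --- therefore does not work: a sequence like $n\chi_{(0,1/n)}$ is bounded in $L^1$ and converges to $0$ a.e.\ and in the sense of measures, yet admits no weakly $L^1$-convergent subsequence, and the same phenomenon can occur for $\nabla u_\e$ while $u_\e\to u$ in $L^1$ (take $u_\e(x)=Fx+G_\e(x_1)$ with $G_\e$ a thin tent; then $u_\e\to Fx$ in $L^1$, $\nabla u_\e$ is bounded in $L^1$ but not equi-integrable). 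The paper fills this hole with a dedicated step: it proves directly that
\begin{equation*}
\limsup_{k\uparrow\infty}\limsup_{\e\downarrow0}\int_{(A)_{-\e R}\cap\{|\nabla u_\e|\geq k\}}|\nabla u_\e|\,dx=0
\end{equation*}
by splitting $\lambda_\bb^{-1/(p-1)}=(1-\chi)\lambda_\bb^{-1/(p-1)}+\chi\lambda_\bb^{-1/(p-1)}$ with $\chi=\mathbf 1_{\{\lambda_\bb^{-1}>k^{(p-1)/2}\}}$, estimating the first piece by $k^{-1/2}|\partial_\bb^\e u_\e|$ and the second via Birkhoff's theorem applied to $\chi\lambda_\bb^{-1/(p-1)}\in L^1(\Omega)$ (whose expectation tends to $0$ as $k\uparrow\infty$ by dominated convergence). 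You need an argument of this type; without it, the asserted weak $W^{1,1}_{\loc}$-convergence does not follow in the critical case.

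One smaller remark: as written, your Step~4 invokes Step~3 to identify the limit of the discrete bilinear form, which would create a circular dependency in the $q=1$ case. That particular issue can be avoided by integrating by parts twice, i.e.\ writing $-\int u\,\partial_j\varphi=\,-\lim_\e\int u_\e\,\partial_j\varphi=\lim_\e\int(\partial_j u_\e)\varphi$ using only the hypothesis $u_\e\wto u$ in $L^1$; but this repair does not remove the need for equi-integrability to get the claimed weak $W^{1,1}_{\loc}$-convergence of the sequence itself.
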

The previous lemma combined with a two-scale argument allows to construct recovery sequences for affine limits:
\begin{lemma}[Recovery sequence---affine limit]\label{L:affine}
  Suppose Assumption~\ref{H} is satisfied. Consider an affine function $g:\R^d\to\R^n$ with $\nabla g=F$ and fix $\omega\in\Omega_F\cap \Omega_0$, cf. Remark~\ref{R:Omega0}. Then for all bounded Lipschitz domains $A\subset\R^d$ there exists a sequence $(u_\e)$ such that 
  \begin{equation*}
    \lim_{\e\downarrow0}\|u_\e-g\|_{L^{\frac{\beta}{\beta+1} p}(A)}=0~\mbox{ and }~\lim_{\e\downarrow0}E_\e(\omega;u_\e,A)= |A|W_0(F).
  \end{equation*}
\end{lemma}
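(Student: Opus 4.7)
The plan is to construct $u_\e$ by pasting together near-minimizers on small cubes tiling $A$, exploiting the representation formula of Corollary~\ref{C:rep_Whom}. For fixed $\delta>0$, I would choose finitely many pairwise disjoint half-open cubes $Q_1^\delta,\dots,Q_{N(\delta)}^\delta\subset A$ of side length $\delta$ whose union $A_\delta:=\bigcup_i Q_i^\delta$ satisfies $|A\setminus A_\delta|\to 0$ as $\delta\downarrow 0$ (this is possible since $A$ is a bounded Lipschitz domain). For each $i$, Corollary~\ref{C:rep_Whom} provides $\varphi_\e^i\in\calA^\e_g(Q_i^\delta)$ with $\lim_{\e\downarrow 0}\tfrac{1}{|Q_i^\delta|}E_\e(\omega;\varphi_\e^i,Q_i^\delta)=W_0(F)$. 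Setting $\psi_\e^i:=\varphi_\e^i-g\in\calA^\e_0(Q_i^\delta)$, my candidate is
\[
u_\e^\delta:=g+\sum_{i=1}^{N(\delta)}\psi_\e^i.
\]

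The next step is to verify that the energy splits additively. Since $\supp\psi_\e^i\subset(Q_i^\delta)_{-\e R}$, an elementary distance argument gives $\dist(z,\supp\psi_\e^i)>\e R$ for every $z\in\e\Z^d$ with $z\notin Q_i^\delta$. Hence $\psi_\e^j$ vanishes on $B_{\e R}(z)$ whenever $j\neq i$ and $z\in Q_i^\delta$, and all $\psi_\e^j$ vanish on $B_{\e R}(z)$ when $z\notin A_\delta$. Combined with the finite-range property \eqref{rem:finiterange}, this forces $\partial_\bb^\e u_\e^\delta(z)=\partial_\bb^\e\varphi_\e^i(z)$ for $z\in Q_i^\delta$ and $\partial_\bb^\e u_\e^\delta(z)=F\bb$ for $z\in A\setminus A_\delta$, yielding
\[
E_\e(\omega;u_\e^\delta,A)=\sum_{i=1}^{N(\delta)}E_\e(\omega;\varphi_\e^i,Q_i^\delta)+\e^d\!\!\!\sum_{z\in\e\Z^d\cap(A\setminus A_\delta)}\sum_{\bb\in\calE_0}V_\bb(\tau_{z/\e}\omega;F\bb).
\]
Passing $\e\downarrow 0$ for fixed $\delta$, the first sum converges to $|A_\delta|W_0(F)$ by Corollary~\ref{C:rep_Whom}, while the second converges to $|A\setminus A_\delta|\,\mathbb E[\sum_\bb V_\bb(\cdot;F\bb)]$ by Birkhoff's theorem (Theorem~\ref{T:Birk}); the expectation is finite thanks to \eqref{ass:V:1} and $\mathbb E[\lambda_\bb^\alpha]<\infty$ with $\alpha\geq 1$. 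Letting $\delta\downarrow 0$ then yields the limit $|A|W_0(F)$.

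For $L^{\frac{\beta}{\beta+1}p}$-convergence, I set $s:=\frac{\beta}{\beta+1}p$. Since $\psi_\e^i\in\calA^\e_0(Q_i^\delta)$, the Poincar\'e inequality on $W^{1,s}_0(Q_i^\delta)$ gives $\|\psi_\e^i\|_{L^s(Q_i^\delta)}\leq C\delta\|\nabla\psi_\e^i\|_{L^s(Q_i^\delta)}$, and I would estimate $\|\nabla\psi_\e^i\|_{L^s(Q_i^\delta)}$ via Lemma~\ref{L:sumint} and H\"older's inequality with conjugate exponents $\frac{\beta+1}{\beta}$ and $\beta+1$, obtaining
\[
\|\nabla\psi_\e^i\|_{L^s(Q_i^\delta)}^s\leq C\bigl(E_\e(\omega;\varphi_\e^i,Q_i^\delta)+|F|^p|Q_i^\delta|\bigr)^{\beta/(\beta+1)}\Bigl(\e^d\!\!\!\sum_{z\in\e\Z^d\cap Q_i^\delta}\sum_\bb\lambda_\bb(\tau_{z/\e}\omega)^{-\beta}\Bigr)^{1/(\beta+1)}.
\]
The first factor is controlled via \eqref{ass:V:1}, and the second via Birkhoff's theorem together with $\mathbb E[\lambda_\bb^{-\beta}]<\infty$. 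Summing over $i$ (using that $\|\cdot\|_{L^s(A_\delta)}^s$ decomposes additively over disjoint cubes) will give $\|u_\e^\delta-g\|_{L^s(A)}\leq C\delta$ for $\e$ sufficiently small, and a diagonal extraction $u_\e:=u_\e^{\delta(\e)}$ with $\delta(\e)\downarrow 0$ slowly enough will produce the sought recovery sequence.

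The main obstacle will be the clean additive splitting of the energy in the second step: the slight non-locality of $E_\e$ (range $\e R$) forces the use of a collar of width $\e R$ around each cube, which is precisely what $\calA^\e_g(Q_i^\delta)$ is designed to accommodate; combined with the disjointness of the cubes, this ensures that $B_{\e R}(z)$ meets at most one $\supp\psi_\e^i$ at every vertex, which in turn enables the decomposition. The subsequent ergodic-theoretic inputs and the $L^s$-estimate rely crucially on both moment bounds in Assumption~\ref{H}, but are otherwise routine.
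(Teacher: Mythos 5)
Your proof is correct and follows essentially the same two‑scale construction as the paper: cover $A$ by disjoint mesoscopic cubes, install near‑minimizers of the rescaled cell problem (Corollary~\ref{C:rep_Whom}) with affine boundary data, split the energy additively via the finite‑range property (Remark~\ref{Remark:finiterange}), control $\|u_\e-g\|_{L^{\frac{\beta}{\beta+1}p}}$ via Poincar\'e, Lemma~\ref{L:sumint}, H\"older and Birkhoff, and finish with a diagonal extraction. The only cosmetic difference is that the paper bounds the boundary‑strip energy from above using \eqref{ass:V:1} and Birkhoff applied to $\sum_\bb\lambda_\bb$ (which lies in the fixed finite list underlying $\Omega_0$), rather than applying Birkhoff directly to $\omega\mapsto\sum_\bb V_\bb(\omega;\cdot)$ as you do; since only an upper bound on that term is needed, this changes nothing substantive.
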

Up to this point our argument only relied on the general moment condition of Assumption~\ref{H}. To proceed further, we require localization arguments that are based on gluing constructions for which we additionally need to suppose Assumption~\ref{A:2T}, see Lemmas~\ref{L:glue} and \ref{L:glue:convex} below. In addition to these gluing contructions, the generalization of Lemma~\ref{L:affine} to non-affine limits crucially relies on some mild regularity properties of $W_0$, which we state next:
\begin{proposition}[Continuity of $W_0$]\label{P:condwhom}
Suppose Assumptions~\ref{H} and \ref{A:2T} are satisfied. Then the function $W_0:\R^{n\times d}\to[0,\infty)$ is continuous.
\end{proposition}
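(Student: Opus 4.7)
The plan is to prove $W_0$ is both upper and lower semicontinuous via the representations $W_0(F)=\inf_k W_0^{(k)}(F)=\lim_k W_0^{(k)}(F)$ from Lemma~\ref{L:indwhom}, where $W_0^{(k)}(F):=\frac{1}{k^d}\mathbb E[m_F(\cdot;kY)]$.

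\emph{Upper semicontinuity.} I would first show each $W_0^{(k)}$ is upper semicontinuous in $F$. Given $F_n\to F$ and $\eta>0$, I would measurably select $\phi^\omega\in\calA_0^1(kY)$ that is $\eta$-near-optimal for $m_F(\omega;kY)$. Testing $m_{F_n}(\omega;kY)$ with $g_{F_n}+\phi^\omega$, using the continuity of $V_\bb(\omega;\cdot)$ in $r$ together with the $p$-growth upper bound \eqref{ass:V:1} and $\mathbb E[\lambda_\bb^\alpha]<\infty$ (to provide a dominating function in $\omega$), dominated convergence yields $\limsup_n W_0^{(k)}(F_n)\leq W_0^{(k)}(F)+\eta$. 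Since an infimum of upper semicontinuous functions is upper semicontinuous, $W_0=\inf_k W_0^{(k)}$ inherits upper semicontinuity.

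\emph{Lower semicontinuity via gluing.} For the matching inequality $W_0(F)\leq\liminf_n W_0(F_n)$, I would fix $\omega$ in a full-measure set, let $F_n\to F$, and for each $n$ and large $k$ choose near-optimal $\phi_{n,k}\in\calA_0^1(kY)$ for $m_{F_n}(\omega;kY)$. The gluing construction of Lemma~\ref{L:glue} (scalar case, relying on truncation) or Lemma~\ref{L:glue:convex} (vectorial case, relying on the Rellich-type embedding of Lemma~\ref{L:interpolation2}) then produces an admissible $\psi_{n,k}\in\calA_0^1(kY)$ for $m_F(\omega;kY)$ that agrees with $g_{F_n}+\phi_{n,k}$ outside a shrinking boundary layer of $kY$ and transitions to $g_F$ within it. The transition contribution is bounded using only the $p$-growth upper bound \eqref{ass:V:1}; coercivity (Lemma~\ref{L:coercivity}) gives uniform control of $\phi_{n,k}$ in the appropriate weighted Sobolev space, and a H\"older inequality with exponents dictated by $\alpha,\beta$ combined with Birkhoff's ergodic theorem applied to $\lambda_\bb^\alpha$ and $\lambda_\bb^{-\beta}$ bounds the transition energy by $C_M |F_n-F|$ plus a boundary-layer term that vanishes as one first shrinks the layer and then sends $k\to\infty$. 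Finally passing $n\to\infty$ yields $W_0(F)\leq\liminf_n W_0(F_n)$.

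\emph{Main obstacle.} The central difficulty is the absence of any quantitative modulus on $V_\bb(\omega;\cdot)$: without Lipschitz or H\"older continuity in $r$ one cannot pointwise compare $V_\bb(\omega;r)$ with $V_\bb(\omega;r+s)$ in terms of $|s|$, so the lower semicontinuity argument must avoid any naive perturbation and instead localize the change in boundary data to a thin transition layer on which only the $p$-growth bound is used. In the vectorial case the borderline moment condition $\frac{1}{\alpha}+\frac{1}{\beta}\leq\frac{p}{d}$ is essential: it is exactly what allows Lemma~\ref{L:interpolation2} to furnish the compact embedding compensating for the lack of truncation, and hence to show that the transition error vanishes in the limit.
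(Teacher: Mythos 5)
Your upper semicontinuity argument is a genuinely different and cleaner route than the paper's. The paper establishes $\limsup_N W_0(F^N)\le W_0(F)$ by applying the gluing machinery symmetrically to the lower bound step; you instead exploit the representation $W_0(F)=\inf_kW_0^{(k)}(F)$ with $W_0^{(k)}(F)=k^{-d}\mathbb E[m_F(\cdot;kY)]$, proving each $W_0^{(k)}$ is upper semicontinuous by testing $m_{F_n}$ with a measurably selected near-minimizer $\phi^\omega$ of $m_F(\omega;kY)$ (selection works since $\calA_0^1(kY)$ is finite-dimensional and $E_1$ is Carath\'eodory) and then letting $n\to\infty$ under the expectation via dominated convergence; the domination comes from near-optimality together with the $p$-growth upper bound \eqref{ass:V:1} and $\sum_\bb\mathbb E[\lambda_\bb]<\infty$, so only the first moment of $\lambda_\bb$ is actually needed there. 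An infimum of upper semicontinuous functions is upper semicontinuous, so $W_0$ inherits it. This sidesteps one use of the gluing lemmas and works without any quantitative modulus on $V_\bb(\omega;\cdot)$, which is exactly the obstacle you identify. Your lower semicontinuity argument mirrors the paper's: fit the boundary data of a near-optimal competitor for $m_{F_n}$ to the $m_F$ problem via gluing, and absorb the boundary-layer and transition energies; note, though, that the gluing lemmas are stated as $\limsup_{\e\downarrow0}$ statements, so you should either rescale to $\e=1/k$ (as the paper does via Corollary~\ref{C:rep_Whom} and Lemma~\ref{L:affine}) or restate the gluing estimate directly on $kY$ before applying it.

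One concrete error: you have swapped the two gluing lemmas. Lemma~\ref{L:glue} is the \emph{vectorial} case (Assumption~\ref{A:2T}~(A)) and relies on the Rellich-type embedding of Lemma~\ref{L:interpolation2}; Lemma~\ref{L:glue:convex} is the \emph{scalar} case (Assumption~\ref{A:2T}~(B)) and relies on truncation. Your closing paragraph attributes the Rellich-type embedding correctly to the vectorial case, so this is a naming slip rather than a conceptual mistake, but it should be corrected: the two constructions are not interchangeable, and the error statement \eqref{L:glue:st0:convex} for the scalar lemma carries extra terms (the truncation level $s$, the parameter $M$, the error $O_M$) that must be tuned in a specific order, as in the paper's proof.
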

The proof of the continuity of $W_0$ combines the recovery sequence constructions for affine functions, cf.\ Lemma~\ref{L:affine}, and the already mentioned gluing constructions in Lemma~\ref{L:glue} and Lemma~\ref{L:glue:convex}.
\begin{remark}\label{R:lipschitz}
If we assume in addition that the potential $V_\bb$ is $p$-Lipschitz continuous, i.e.\ that there exists a constant $L>0$ such that for all $r,s\in\R^n$ and $\bb\in\calE_0$ it holds
\begin{equation}\label{Vplipschitz}
 |V_\bb(\omega;r)-V_\bb(\omega;s)|\leq L\left(1+\lambda_\bb(\omega)|s|^{p-1}+\lambda_\bb(\omega)|r|^{p-1}\right)|r-s|,
\end{equation}
our proof simplifies. Indeed, adapting arguments of \cite{DMM86,MM94}, where the unweighted continuum case is considered, we deduce from \eqref{Vplipschitz}  directly that $W_0$ is $p$-Lipschitz continuous and that there exists a set of full measure such that \eqref{lim1} holds true for all $F\in\R^{n\times d}$. 

In the continuum case, \eqref{Vplipschitz} might be considered to be a mild assumption: Since quasiconvex integrands with $p$-growth automatically satisfy \eqref{Vplipschitz}, the $p$-Lipschitz condition already would follow from the growth condition \eqref{ass:V:1} and lower semicontinuity of $E_\e$. This observation fails in the discrete case considered here. We therefore do not assume  \eqref{Vplipschitz}.
\end{remark}

We prove the liminf inequality by appealing to the blow-up technique, introduced in \cite{FM91}, cf.\ \cite{BMS08,DG15} for similar applications to homogenization:
\begin{proposition}[Lower bound]\label{P:inf}
Suppose Assumptions~\ref{H} and \ref{A:2T} are satisfied. Fix $\omega\in \Omega_1$, cf.~Remark~\ref{R:Omega0}. Let $A\subset\R^d$ be a bounded Lipschitz domain. Let $(u_\e)$ denote a sequence in $L^1(A,\R^n)$ that weakly converges in $L^1(A)$ to a limit $u\in W^{1,p}(A,\R^n)$. Then
\begin{equation*}
  \liminf_{\e\downarrow0}E_\e(\omega;u_\e,A)\geq \int_AW_0(\nabla u(x))\,dx.
\end{equation*}
\end{proposition}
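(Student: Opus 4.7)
The plan is to apply the blow-up technique of Fonseca--M\"uller, which reduces the liminf inequality to a pointwise lower bound at almost every Lebesgue point of $\nabla u$. This pointwise bound will then be obtained by combining the asymptotic cell formula of Corollary~\ref{C:rep_Whom} with a boundary matching performed through the gluing lemmas afforded by Assumption~\ref{A:2T}.

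\textbf{Step 1 (Energy measures and localization).} We may assume $\liminf_{\e\downarrow 0}E_\e(\omega;u_\e,A)<\infty$ and, after extracting a subsequence (not relabeled), that the liminf is attained as a limit. Define the non-negative Radon measures on $\overline A$
\[
\mu_\e:=\e^d\sum_{z\in\e\Z^d\cap A}\sum_{\bb\in\calE_0}V_\bb(\tau_{z/\e}\omega;\partial_\bb^\e u_\e(z))\,\delta_z.
\]
Uniform boundedness of $\mu_\e(\overline A)$ allows passing to a further subsequence with $\mu_\e\stackrel{\ast}{\rightharpoonup}\mu$ to a finite non-negative Radon measure $\mu$ on $\overline A$, so that $\lim_\e E_\e(\omega;u_\e,A)\geq\mu(A)$. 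Decompose $\mu=f\,dx+\mu_s$ with respect to Lebesgue measure, where $f\geq 0$ and $\mu_s$ is singular. It suffices to show that $f(x_0)\geq W_0(\nabla u(x_0))$ for almost every $x_0\in A$.

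\textbf{Step 2 (Good points and cubes).} By Besicovitch differentiation, Lebesgue's theorem and approximate differentiability of Sobolev functions, at almost every $x_0\in A$ we have
\[
f(x_0)=\lim_{\rho\downarrow 0}\frac{\mu(Q_\rho(x_0))}{\rho^d},\qquad \lim_{\rho\downarrow 0}\frac{1}{\rho^d}\int_{Q_\rho(x_0)}|\nabla u(x)-F|^p\,dx=0,
\]
where $F:=\nabla u(x_0)$. Set $g_F(x):=u(x_0)+F(x-x_0)$, fix such an $x_0$, and choose a sequence $\rho_k\downarrow 0$ with $\mu(\partial Q_{\rho_k}(x_0))=0$; this is possible since $\mu$ is finite and only countably many concentric cube boundaries can have positive mass. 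Weak-$\ast$ convergence then gives
\[
\frac{\mu(Q_{\rho_k}(x_0))}{\rho_k^d}=\lim_{\e\downarrow 0}\frac{1}{\rho_k^d}E_\e(\omega;u_\e,Q_{\rho_k}(x_0)).
\]

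\textbf{Step 3 (Boundary matching and cell formula).} For each fixed $k$ we modify $u_\e$ on a thin inner boundary layer of $Q_{\rho_k}(x_0)$, producing $v_{\e,k}\in\calA^\e_{g_F}(Q_{\rho_k}(x_0))$, via Lemma~\ref{L:glue} in the vectorial case (exploiting \eqref{intro:ass2}) or Lemma~\ref{L:glue:convex} in the scalar case (exploiting the convexity-at-infinity of Assumption~\ref{A:2T}(B)). These lemmas bound the extra energy cost by a quantity controlled by $\|u_\e-g_F\|_{L^q(Q_{\rho_k}(x_0))}$ for an exponent $q$ as in \eqref{ass:on:q}. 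By Lemma~\ref{L:compactness}, $u_\e\to u$ strongly in $L^q_{\loc}(A)$, and by the second property of Step 2 together with Poincar\'e's inequality, $\|u-g_F\|_{L^q(Q_{\rho_k}(x_0))}=o(\rho_k^{d/q})$ as $k\to\infty$. Applying Corollary~\ref{C:rep_Whom} to the fixed cube $Q_{\rho_k}(x_0)$ then yields
\[
\liminf_{\e\downarrow 0}\frac{1}{\rho_k^d}E_\e(\omega;v_{\e,k},Q_{\rho_k}(x_0))\geq W_0(F),
\]
and a diagonal selection $\e=\e(k)\downarrow 0$ reconciling this with the two convergences above gives $f(x_0)\geq W_0(F)=W_0(\nabla u(x_0))$, completing the argument.

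\textbf{Main obstacle.} The principal difficulty lies in the boundary matching of Step 3: although the gluing lemmas are provided as black boxes by Assumption~\ref{A:2T}, we must apply them so that the extra cost is genuinely $o(\rho_k^d)$ in the limit $\e\downarrow 0$ and subsequently $k\to\infty$, despite the potentially degenerate weights $\lambda_\bb$. The cost of averaging $u_\e$ against $g_F$ on an annular layer involves weighted $L^p$-norms of $u_\e-g_F$ which, after H\"older, demand strong $L^q$-convergence with $q$ satisfying \eqref{ass:on:q}, i.e.\ precisely what Lemma~\ref{L:compactness} delivers in the critical regime $\frac1\alpha+\frac1\beta=\frac{p}{d}$. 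The diagonal extraction must moreover reconcile three rate requirements simultaneously---weak-$\ast$ convergence of $\mu_\e$ on $Q_{\rho_k}(x_0)$, strong $L^q_{\loc}$-convergence of $u_\e$, and the $\e\downarrow 0$ limit in Corollary~\ref{C:rep_Whom}---which is standard once each ingredient has been established but requires bookkeeping.
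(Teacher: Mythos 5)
Your outline reproduces the blow-up skeleton of the paper's proof (measures $\mu_\e$, Besicovitch decomposition, approximate differentiability, gluing, cell formula), but it contains one genuine gap that the paper's own argument is specifically designed to circumvent.

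In Step~2 you take $F:=\nabla u(x_0)$ and $g_F(x)=u(x_0)+F(x-x_0)$, and in Step~3 you invoke Corollary~\ref{C:rep_Whom} for the cube $Q_{\rho_k}(x_0)$ with this $F$. But Corollary~\ref{C:rep_Whom} only holds for $\omega\in\Omega_F$, and the hypothesis of Proposition~\ref{P:inf} fixes $\omega\in\Omega_1=\bigcap_{F\in\Q^{n\times d}}\Omega_F\cap\Omega_0$; that is, for the given $\omega$ you only know the subadditive-ergodic limit formula for \emph{rational} $F$. Since $\nabla u(x_0)$ is in general irrational, the ergodic limit for $m_{\nabla u(x_0)}$ may simply fail for the fixed $\omega$. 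As Remark~\ref{R:Omega0} emphasizes, because no $p$-Lipschitz continuity (cf.\ \eqref{Vplipschitz}) of the potentials is assumed, one cannot upgrade the common good set from $\Q^{n\times d}$ to $\R^{n\times d}$ at this stage. The paper resolves this by choosing in Step~2 a sequence of affine comparison functions $g_j$ with $\nabla g_j=F_j\in\Q^{n\times d}$, $F_j\to\nabla u(x_0)$, and $g_j(x_0)=u(x_0)$, so that Corollary~\ref{C:rep_Whom} applies for each $j$; the gluing lemmas then give $W_0(F_j)\leq(1+\tfrac{C}{m})\tfrac{\mu(Q_j)}{|Q_j|}+\text{(error)}$, and only at the very end is the \emph{separately proved} continuity of $W_0$ (Proposition~\ref{P:condwhom}) used to let $j\to\infty$ and convert $W_0(F_j)$ into $W_0(\nabla u(x_0))$. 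Without the rational approximation and the appeal to Proposition~\ref{P:condwhom}, your Step~3 does not go through.

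A secondary, smaller point: the gluing Lemma~\ref{L:glue} (resp.\ \ref{L:glue:convex}) already controls the excess energy by $\|u-\overline u\|_{L^p(A)}^p$, a quantity depending on the \emph{limit} $u$, not on $u_\e$; the weighted-space compactness (Lemma~\ref{L:interpolation2}) is used \emph{inside the proof of the gluing lemma}, not in its application here. Your invocation of Lemma~\ref{L:compactness} and the exponent $q$ from \eqref{ass:on:q} at the application stage is therefore redundant --- what is actually needed is the $L^p$ rate $\frac{1}{\rho_j}\left(\fint_{Q_{\rho_j}}|u-g_j|^p\right)^{1/p}\to 0$, which follows from approximate $p$-differentiability of $u\in W^{1,p}$.
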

With the continuity of $W_0$ (Proposition~\ref{P:condwhom}) and the liminf inequality (Proposition~\ref{P:inf}) at hand, the existence of a recovery sequence for arbitrary functions in $W^{1,p}$ can be easily deduced from the existence of a recovery sequence for affine functions (Lemma~\ref{L:affine}) as outlined in Section~\ref{S:P}:  
\begin{proposition}[Recovery Sequence]\label{P:sup}
Suppose Assumptions~\ref{H} and \ref{A:2T} are satisfied.  Fix $\omega\in \Omega_1$, cf.~Remark~\ref{R:Omega0}. Let $A\subset \R^d$ be a Lipschitz domain and $u\in W^{1,p}(A,\R^n)$. Then there exists a sequence $(u_\e)\subset W_\loc^{1,\infty}(\R^d,\R^n)$ such that 
 $$\lim_{\e\downarrow0}\|u_\e-u\|_{L^{\frac{\beta}{\beta+1} p}(A)}=0\quad\mbox{and}\quad \lim_{\e\downarrow0}E_\e(\omega;u_\e,A)=\int_A W_0(\nabla u(x))\,dx.$$
\end{proposition}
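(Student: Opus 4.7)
The plan is to combine the affine recovery of Corollary~\ref{C:rep_Whom} with a two-stage density argument powered by the continuity of $W_0$ (Proposition~\ref{P:condwhom}) and the liminf inequality (Proposition~\ref{P:inf}). First I handle piecewise affine targets by patching affine recovery sequences across simplicial interfaces, and then pass to arbitrary $u\in W^{1,p}(A,\R^n)$ by approximation and a diagonal extraction.

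\textbf{Step 1 (Piecewise affine targets).} Assume $u\in W^{1,\infty}(A,\R^n)$ is continuous and piecewise affine on a finite simplicial decomposition $\{T_\ell\}$ of $A$, with $\nabla u|_{T_\ell}=F_\ell$ and affine extension $g_\ell$. On each $T_\ell$ I apply Corollary~\ref{C:rep_Whom} to obtain a near-minimiser $u_\e^{(\ell)}\in\calA^\e_{g_\ell}(T_\ell)$ satisfying $|T_\ell|^{-1}E_\e(\omega;u_\e^{(\ell)},T_\ell)\to W_0(F_\ell)$. By definition \eqref{def:Aepsg}, $u_\e^{(\ell)}=g_\ell$ outside the shrunken domain $(T_\ell)_{-\e R}$. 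Setting $u_\e|_{T_\ell}:=u_\e^{(\ell)}|_{T_\ell}$, the continuity of $u$ across faces together with the matching identity $g_\ell=g_m=u$ on $\partial T_\ell\cap\partial T_m$ makes $u_\e\in\calA^\e$ a globally continuous function that coincides with $u$ on an $O(\e R)$-tube around the skeleton $S:=\bigcup_\ell\partial T_\ell$. The finite interaction range (Remark~\ref{Remark:finiterange}) then ensures that every discrete edge crossing $S$ lies inside this tube, giving the exact splitting
\begin{equation*}
E_\e(\omega;u_\e,A)=\sum_\ell E_\e(\omega;u_\e^{(\ell)},T_\ell)+\mathcal R_\e,
\end{equation*}
with $\mathcal R_\e$ supported on a thin layer of volume $O(\e)$. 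Since $\|\nabla u\|_\infty<\infty$ and $\lambda_\bb\in L^\alpha(\mathbb P)$, the growth bound of Assumption~\ref{H} and Birkhoff's theorem (Theorem~\ref{T:Birk}) force $\mathcal R_\e\to 0$. Hence $\limsup_\e E_\e(\omega;u_\e,A)\le\sum_\ell|T_\ell|W_0(F_\ell)=E_\ho(u)$, and Lemma~\ref{L:coercivity} yields $u_\e\to u$ in $L^{\frac{\beta}{\beta+1}p}(A)$.

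\textbf{Step 2 (General targets and diagonalisation).} For $u\in W^{1,p}(A,\R^n)$, I choose piecewise affine $u_k\to u$ in $W^{1,p}(A)$ (possible since $A$ is Lipschitz). The continuity (Proposition~\ref{P:condwhom}) and $p$-growth (Lemma~\ref{L:indwhom2}) of $W_0$ together with dominated convergence imply $E_\ho(u_k)\to E_\ho(u)$. Step~1 applied to each $u_k$ produces sequences $(u_\e^{(k)})_\e$ with $u_\e^{(k)}\to u_k$ in $L^{\frac{\beta}{\beta+1}p}(A)$ and $\limsup_\e E_\e(\omega;u_\e^{(k)},A)\le E_\ho(u_k)$. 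A standard Attouch-type diagonal extraction selects $k(\e)\uparrow\infty$ so that $\tilde u_\e:=u_\e^{(k(\e))}\to u$ in $L^{\frac{\beta}{\beta+1}p}(A)$ and $\limsup_\e E_\e(\omega;\tilde u_\e,A)\le E_\ho(u)$. The liminf estimate of Proposition~\ref{P:inf} provides the matching lower bound, whence $\lim_\e E_\e(\omega;\tilde u_\e,A)=E_\ho(u)$, as required.

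\textbf{Main obstacle.} The only genuinely delicate point is the discrete glueing in Step~1. While continuity across the simplicial interfaces is automatic (each $u_\e^{(\ell)}$ equals $g_\ell$ on the boundary layer, and the $g_\ell$ coincide with $u$ on shared faces), what requires verification is that the remainder $\mathcal R_\e$ concentrated in the $O(\e)$-tube around the skeleton does not spoil the sharp energy asymptotics; this is handled by the finite range of interaction and the ergodic control of $\lambda_\bb$. A much sharper form of the same difficulty—namely matching boundary values with no available truncation in the vectorial regime—is what forces the moment relation \eqref{intro:ass2} and the Rellich-type compactness of Lemma~\ref{L:interpolation2} used elsewhere in the paper. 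These deeper inputs enter the present proof only indirectly through the continuity of $W_0$ (Proposition~\ref{P:condwhom}), which is in turn based on the gluing Lemmas~\ref{L:glue} and \ref{L:glue:convex}.
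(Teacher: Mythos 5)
Your overall scheme (patching affine recoveries and then densifying and diagonalising) is the same as the paper's, but there is a genuine gap in Step~1, and it is exactly the point where the gluing lemmas are needed \emph{directly}, contrary to your closing remark.

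You obtain the boundary-matched near-minimisers $u_\e^{(\ell)}\in\calA^\e_{g_\ell}(T_\ell)$ by invoking Corollary~\ref{C:rep_Whom}. That corollary is stated only for coordinate cubes $\bar Q=[a,b]$, not for simplices, and it holds only for $\omega\in\Omega_F$; with $\omega\in\Omega_1=\bigcap_{F\in\Q^{n\times d}}\Omega_F\cap\Omega_0$ this is guaranteed only for rational~$F$. A simplex $T_\ell$ is not a cube, and the gradients $F_\ell$ of a generic piecewise affine approximant are not rational, so Corollary~\ref{C:rep_Whom} does not deliver the sequences your patching requires. Producing a recovery sequence with prescribed affine Dirichlet data on a general Lipschitz domain with a general gradient is nontrivial: the paper accomplishes this in Step~1 of its own proof by first invoking Lemma~\ref{L:affine} along a rational sequence $F_j\to F$ together with the continuity of $W_0$, and then \emph{enforcing the Dirichlet condition} via the gluing constructions of Lemma~\ref{L:glue} (vectorial) or Lemma~\ref{L:glue:convex} (scalar). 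Your claim that the gluing lemmas enter ``only indirectly through the continuity of $W_0$'' is therefore incorrect: they are a direct ingredient in the piecewise-affine patching step.

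Two further, more minor, points. First, your estimate of the remainder $\mathcal R_\e$ on the $O(\e R)$-tube around the skeleton invokes Birkhoff, but the tube has $\e$-dependent thickness and Theorem~\ref{T:Birk} does not control ergodic averages over shrinking sets; the correct way is to bound the $\e$-tube by a fixed $\rho$-tube, apply Birkhoff there, and send $\rho\downarrow 0$ afterwards. This is fixable, but the paper sidesteps it entirely by making the $\rho$-strip $A^j\setminus(A^j)_{-\rho}$ part of the construction, so that the strip contribution is governed by the a priori bound \eqref{ass:V:1}, the piecewise affine interpolant $v_\e$, and Birkhoff on a \emph{fixed} set — see their \eqref{limsup:pwaff2}. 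Second, your appeal to Lemma~\ref{L:coercivity} only gives weak convergence in $W^{1,\frac{\beta}{\beta+1}p}_{\loc}$; the strong $L^{\frac{\beta}{\beta+1}p}(A)$ convergence of the patched sequence should be deduced from the Poincar\'e estimate applied to each $u_\e^{(\ell)}-g_\ell$ (which vanishes in the safety zone near $\partial T_\ell$) together with the energy bound, as is done implicitly in Lemma~\ref{L:affine} Step~3.
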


We finally state the (somewhat technical) gluing constructions that we apply in the proofs of Proposition~\ref{P:condwhom}, \ref{P:inf} and \ref{P:sup}. In the vectorial case our construction is based on a  compact embedding in weighted spaces. It relies on the following weighted Poincar\'e inequality:
\begin{lemma}\label{L:interpolation}
Let Assumption~\ref{H} be satisfied and let $1\leq q<\infty$ satisfy
\begin{equation}\label{alphabetaqp}
 \alpha>1,\quad \left(1-\frac1\alpha\right)\frac{1}q\geq \left(1+\frac1\beta\right)\frac{1}p-\frac1d.
\end{equation}
Then there exists a constant $C<\infty$ such that for every cube $Q\in\mathcal Q_\e:=\{[a,b)\,:\,a,b\in\e\Z^d\}$, every $u\in\calA^\e$ and all $\omega\in\Omega$ we have
\begin{align}\label{ineq:interpolation}
 &\left(\fint_{Q}|u(x)-\fint_Qu(y)\,dy|^q\bigg(\sum_{\bb\in\calE_0}\lambda_\bb(\tau_{\lfloor\frac{x}{\e}\rfloor}\omega)\bigg)\,dx\right)^\frac1q\notag\\
 &\,\leq C|Q|^{\frac1d}\left(m_{\e,Q,\alpha}(\omega)\right)^\frac1q \left(\tilde m_{\e,Q,\beta}(\omega)\right)^\frac1p  \left(\frac{\e^d}{|Q|}\!\!\sum_{z\in\e\Z^d\cap (Q)_{\e R}}\sum_{\bb\in\calNN_0}\lambda_\bb(\tau_{\frac{z}{\e}}\omega)|\partial_\bb u(z)|^p\right)^{\frac{1}p},
\end{align}
where
\begin{eqnarray*}
  m_{\e,Q,\alpha}(\omega)
  &:=&
  \left\{\begin{aligned}
      &\bigg(\frac{\e^d}{|Q|}\!\sum_{z\in\e\Z^d\cap Q}\sum_{\bb\in\calE_0}\lambda_\bb(\tau_{\frac{z}{\e}}\omega)^\alpha\bigg)^{\frac1\alpha}&&\text{for }\alpha<\infty,\\
      &\sum_{\bb\in\calE_0}\sup_{\Omega}\lambda_\bb &&\text{for }\alpha=\infty,
    \end{aligned}\right.\\
  \tilde m_{\e,Q,\beta}(\omega)
  &:=&
  \left\{\begin{aligned}
      &\bigg(\frac{\e^d}{|Q|}\!\sum_{z\in\e\Z^d\cap (Q)_{\e R}}\sum_{\bb\in\calNN_0}(\tfrac{1}{\lambda_\bb(\tau_{\frac{z}{\e}}\omega)})^{\beta}\bigg)^{\frac1\beta }&&\text{for }\beta<\infty,\\
      &\sum_{\bb\in\calNN_0}\sup_\Omega\frac1{\lambda_\bb}&&\text{for }\beta=\infty.
    \end{aligned}\right.
\end{eqnarray*}
\end{lemma}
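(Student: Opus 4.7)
The strategy is to reduce \eqref{ineq:interpolation} to the standard continuum Sobolev--Poincar\'e inequality applied to the piecewise affine interpolation $u\in\calA^\e$, and to bridge the gap via two applications of H\"older's inequality---one on the left-hand side to introduce the weight $\sum_\bb\lambda_\bb$, and one on the right-hand side to convert the unweighted gradient into a weighted $\ell^p$-sum controlled by a negative moment of $\lambda$. By a standard rescaling $x\mapsto \e^{-1}x$, it suffices to treat the case $\e=1$; the general case then follows by the lattice scaling $\calA^\e\ni u\mapsto u(\e\cdot)\in\calA^1$ and the fact that both sides of \eqref{ineq:interpolation} are homogeneous of the correct order in $|Q|$.

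\emph{Choice of exponents.} Set
\[
s:=\frac{q}{1-1/\alpha},\qquad t:=\frac{p}{1+1/\beta},
\]
with the convention that $s=q$ when $\alpha=\infty$ and $t=p$ when $\beta=\infty$. A direct computation shows that the Sobolev embedding condition $\frac{1}{s}\geq \frac{1}{t}-\frac{1}{d}$ is \emph{equivalent} to the hypothesis \eqref{alphabetaqp}. Moreover the requirement $t\geq 1$ is ensured by $\beta\geq \frac{1}{p-1}$, and $s\geq 1$ since $\alpha>1$ and $q\geq 1$. Applied to the piecewise affine interpolation on the cube $Q$, the scale-invariant Sobolev--Poincar\'e inequality yields a constant $C=C(d,s,t)$ with
\[
\Big(\fint_Q|u-\textstyle\fint_Q u|^s\,dx\Big)^{1/s}\leq C|Q|^{1/d}\Big(\fint_Q|\nabla u|^t\,dx\Big)^{1/t}.
\]

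\emph{H\"older on the left.} Writing $w(x):=\sum_{\bb\in\calE_0}\lambda_\bb(\tau_{\lfloor x\rfloor}\omega)$ and applying H\"older with conjugate exponents $(\alpha/(\alpha-1),\alpha)$ (the case $\alpha=\infty$ is trivial: pull out $\|w\|_\infty$), one has
\[
\fint_Q|u-\textstyle\fint_Q u|^q\,w(x)\,dx\leq \Big(\fint_Q|u-\fint_Q u|^s\,dx\Big)^{q/s}\Big(\fint_Q w^\alpha\,dx\Big)^{1/\alpha}.
\]
The second factor is bounded by $C\,m_{1,Q,\alpha}(\omega)^q$ after expanding $w^\alpha\leq (\#\calE_0)^{\alpha-1}\sum_\bb\lambda_\bb^\alpha$ and using $|Q|^{-1}\sum_{z\in Q\cap\Z^d}1\leq C$.

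\emph{H\"older on the right.} By Lemma~\ref{L:sumint} applied with exponent $t\geq 1$,
\[
\int_Q|\nabla u|^t\,dx\leq C\sum_{z\in\Z^d\cap (Q)_R}\sum_{\bb\in\calNN_0}|\partial_\bb u(z)|^t.
\]
Writing $|\partial_\bb u(z)|^t=\big(\lambda_\bb(\tau_z\omega)|\partial_\bb u(z)|^p\big)^{t/p}\lambda_\bb(\tau_z\omega)^{-t/p}$ and applying H\"older with conjugate exponents $(p/t,\beta+1)$ (note $p/(p-t)=\beta+1$; again $\beta=\infty$ is trivial), one obtains
\[
\sum_{z,\bb}|\partial_\bb u(z)|^t\leq \Big(\!\!\sum_{z,\bb}\lambda_\bb(\tau_z\omega)|\partial_\bb u(z)|^p\Big)^{t/p}\Big(\!\!\sum_{z,\bb}\lambda_\bb(\tau_z\omega)^{-\beta}\Big)^{1/(\beta+1)}.
\]
Collecting powers and noting that $\frac{1}{t}\cdot \frac{1}{\beta+1}=\frac{1}{p\beta}$ matches precisely the exponent in $\tilde m_{1,Q,\beta}^{1/p}$, the chain of inequalities produces \eqref{ineq:interpolation}.

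\emph{Anticipated difficulties.} The argument itself is a sequence of rather routine manipulations once the exponents are aligned; the one subtlety to watch is the small non-locality, i.e., that on the right-hand side one needs the enlarged cube $(Q)_R$ (rather than $Q$) in order to control $\int_Q|\nabla u|^t$ through lattice gradients, which is handled by Lemma~\ref{L:sumint}. The treatment of the boundary cases $\alpha=\infty$ and $\beta=\infty$ only requires replacing the corresponding H\"older step by an $L^\infty$ estimate, so no new idea is needed. The conceptual content is therefore entirely in the choice of the Sobolev exponents $s,t$ and in the observation that \eqref{alphabetaqp} is exactly the compatibility condition $\frac{1}{s}\geq \frac{1}{t}-\frac{1}{d}$.
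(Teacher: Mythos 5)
Your proposal is correct and follows essentially the same route as the paper: the scale-invariant Sobolev--Poincar\'e inequality with exponents $s=\frac{\alpha}{\alpha-1}q$ and $t=\frac{\beta}{\beta+1}p$, one H\"older step on the left to bring in the weight $\sum_\bb\lambda_\bb$, Lemma~\ref{L:sumint} (with $A=(Q)_{\e R}$) to pass from the continuum gradient to discrete differences, and a second H\"older step on the right to smuggle in $\lambda_\bb$. One small slip: in your ``H\"older on the left'' step the factor $(\fint_Q w^\alpha)^{1/\alpha}$ is bounded by $C\,m_{1,Q,\alpha}(\omega)$, not by $C\,m_{1,Q,\alpha}(\omega)^q$; this does not affect the argument since after taking the $1/q$-th power of the whole display one correctly obtains the factor $m_{1,Q,\alpha}^{1/q}$ as in \eqref{ineq:interpolation}.
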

Combined with a two-scale argument we obtain:
\begin{lemma}\label{L:interpolation2}
Let Assumption~\ref{H} be satisfied and suppose \eqref{alphabetaqp} with $\frac1q>\frac1p-\frac1d$. Fix $\omega\in\Omega_0$, cf.~Remark~\ref{R:Omega0}, and a bounded Lipschitz domain $A\subset \R^d$. Let $(u_\e)$ denote a sequence with $u_\e\in\calA_\e$ satisfying 
\begin{equation}\label{ass:interpolation}
  \limsup_{\e\downarrow0}\e^d\!\!\!\!\sum_{z\in\e\Z^d\cap A}\sum_{\bb\in\calNN_0}\lambda_\bb(\tau_{\frac{z}{\e}}\omega)|\partial_\bb^\e u(z)|^p<\infty.
\end{equation}
Then, $u_\e \wto u$ weakly in $L^1(A)$ implies for all $A'\Subset A$ 
\begin{equation}\label{claim:interpolation}
 \limsup_{\e\downarrow0}\int_{A'}|u_\e(x)|^q\Big(\sum_{\bb\in\calE_0}\lambda_\bb(\tau_{\lfloor\frac{x}{\e}\rfloor}\omega)\Big)\,dx\leq \left(\sum_{\bb\in\calE_0}\mathbb E[\lambda_\bb]\right) \int_{A}|u(x)|^q\,dx.
\end{equation}
\end{lemma}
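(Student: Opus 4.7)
My plan is a two-scale argument combining the weighted Poincar\'e inequality of Lemma~\ref{L:interpolation} with Birkhoff's theorem. Fix $A'\Subset A''\Subset A$ with $A''$ a finite union of closed cubes of side $\delta>0$ whose corners lie in $\delta\Z^d$, and for $\e>0$ with $\delta/\e\in\N$ write these cubes as $\{Q_i^\delta\}_{i\in I_\delta}\subset\mathcal Q_\e$. The order of limits will be $\e\downarrow 0$ first, then $\delta\downarrow 0$. Introduce the piecewise-constant approximation $\bar u_\e^\delta:=\sum_i(\fint_{Q_i^\delta}u_\e)\chi_{Q_i^\delta}$ and the fluctuation $w_\e^\delta:=u_\e-\bar u_\e^\delta$, and denote $\mu_\e(x):=\sum_{\bb\in\calE_0}\lambda_\bb(\tau_{\lfloor x/\e\rfloor}\omega)$. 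For the main term, note that $\fint_{Q_i^\delta}u_\e\to\fint_{Q_i^\delta}u$ by weak $L^1$-convergence, while $\int_{Q_i^\delta}\mu_\e\,dx\to|Q_i^\delta|\sum_\bb\mathbb E[\lambda_\bb]$ by Birkhoff (applied to $\{\lambda_\bb\}\subset L^1(\Omega)$ on $\Omega_0$); since only finitely many cubes are involved for fixed $\delta$, these combine to
\begin{equation*}
  \lim_{\e\downarrow 0}\int_{A''}|\bar u_\e^\delta|^q\mu_\e\,dx = \Bigl(\textstyle\sum_\bb\mathbb E[\lambda_\bb]\Bigr)\,\sum_i|Q_i^\delta|\Bigl|\fint_{Q_i^\delta}u\Bigr|^q.
\end{equation*}
Lemma~\ref{L:coercivity} gives $u\in W^{1,p}(A)$, and the Sobolev embedding $W^{1,p}\hookrightarrow L^q$ (valid since $\tfrac1q>\tfrac1p-\tfrac1d$) implies $u\in L^q(A)$; hence Lebesgue's differentiation theorem lets the right-hand side converge to $(\sum_\bb\mathbb E[\lambda_\bb])\int_{A''}|u|^q\,dx\leq(\sum_\bb\mathbb E[\lambda_\bb])\int_A|u|^q\,dx$ as $\delta\downarrow 0$.

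For the fluctuation, Lemma~\ref{L:interpolation} applied on each $Q_i^\delta$ (after raising to the $q$-th power and multiplying by $|Q_i^\delta|=\delta^d$) gives
\begin{equation*}
  \int_{Q_i^\delta}|w_\e^\delta|^q\mu_\e\,dx\;\leq\; C\,\delta^{d+q-dq/p}\,m_i\,\tilde m_i^{q/p}\,E_i^{q/p},
\end{equation*}
with $m_i=m_{\e,Q_i^\delta,\alpha}(\omega)$, $\tilde m_i=\tilde m_{\e,Q_i^\delta,\beta}(\omega)$, and $E_i=\e^d\sum_{z\in\e\Z^d\cap (Q_i^\delta)_{\e R}}\sum_{\bb\in\calNN_0}\lambda_\bb(\tau_{z/\e}\omega)|\partial_\bb^\e u_\e(z)|^p$. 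Summing over $i$ and applying a discrete three-exponent H\"older inequality with $r_1,r_2,r_3\geq 1$ satisfying $\tfrac1{r_1}+\tfrac1{r_2}+\tfrac1{r_3}=1$, $r_1\leq\alpha$, and $qr_2/p\leq\beta$, one controls $\sum_i m_i^{r_1}$ and $\sum_i\tilde m_i^{qr_2/p}$ via Birkhoff (both of order $\delta^{-d}$) and $\sum_iE_i^{qr_3/p}$ by combining \eqref{ass:interpolation} with either Jensen's inequality (if $qr_3/p\leq 1$) or the elementary subadditivity $\sum_ix_i^s\leq(\sum_ix_i)^s$ (if $qr_3/p\geq 1$). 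For an admissible choice the powers of $\delta$ collect to $\delta^\sigma$ for some $\sigma>0$, so
\begin{equation*}
  \limsup_{\e\downarrow 0}\int_{A''}|w_\e^\delta|^q\mu_\e\,dx \;\leq\; C\,\delta^\sigma\;\xrightarrow[\delta\downarrow 0]{}\;0.
\end{equation*}
The claim \eqref{claim:interpolation} then follows from Minkowski's inequality in $L^q(A'',\mu_\e)$ applied to $u_\e=\bar u_\e^\delta+w_\e^\delta$, by taking $\limsup_{\e\downarrow 0}$, then $\delta\downarrow 0$, and raising to the $q$-th power.

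\emph{The main obstacle} is the H\"older closure in the fluctuation step: the triple $(r_1,r_2,r_3)$ must respect the moment conditions, absorb the $\delta^{-d}$ factors supplied by Birkhoff, and still leave a strictly positive power of $\delta$. This is where \eqref{alphabetaqp} and the additional strict Sobolev-type inequality $\tfrac1q>\tfrac1p-\tfrac1d$ play distinct roles: the former is precisely the condition under which Lemma~\ref{L:interpolation} is applicable at exponent $q$, while the latter provides the room---crucial at the borderline of \eqref{alphabetaqp}---to pick $r_3$ slightly larger than $p/q$ so that $\sum_iE_i^{qr_3/p}$ is bounded by a constant without paying an $N=\delta^{-d}$ factor. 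The remaining bookkeeping is routine, provided $\Omega_0$ is enlarged (if needed) to contain the countable family of exceptional sets for the Birkhoff averages of $\lambda_\bb$, $\lambda_\bb^{r_1}$, and $\lambda_\bb^{-qr_2/p}$ associated with a fixed admissible triple.
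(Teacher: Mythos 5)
Your overall architecture — cover $A'$ with cubes of side $\delta$, split $u_\e$ into cube means and fluctuations, treat the means by weak $L^1$-convergence and Birkhoff, treat the fluctuations by Lemma~\ref{L:interpolation}, and take $\e\downarrow 0$ before $\delta\downarrow 0$ — is exactly the paper's plan. The gap is in how you close the fluctuation estimate.

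\emph{The triple-H\"older does not close at the borderline of \eqref{alphabetaqp}.} You impose $1/r_1+1/r_2+1/r_3=1$, $1/r_1\geq 1/\alpha$, $1/r_2\geq q/(p\beta)$, which forces $1/r_3\leq 1-\tfrac1\alpha-\tfrac{q}{p\beta}$. When \eqref{alphabetaqp} holds with equality (which Lemma~\ref{L:interpolation2} explicitly allows), this upper bound equals $q/p-q/d$. In particular $1/r_3 < q/p$, so $qr_3/p>1$ and you are in the subadditivity branch, where your total exponent of $\delta$ is $\sigma = (d+q-\tfrac{dq}{p}) - d(1-\tfrac1{r_3}) = q-\tfrac{dq}{p}+\tfrac{d}{r_3} \leq q-\tfrac{dq}{p}+d\bigl(\tfrac{q}{p}-\tfrac{q}{d}\bigr)=0$. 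No admissible choice under your constraints gives $\sigma>0$ in the critical case, and the strict Sobolev condition does not rescue this — it governs the $q\geq p$ case (where one needs $q+d-dq/p>0$), not the borderline of \eqref{alphabetaqp}. Your sketch of ``pick $r_3$ slightly larger than $p/q$'' is not compatible with the upper bound $1/r_3\leq q/p-q/d$ just derived.

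\emph{The constraints are also unnecessary, and the Birkhoff averages you cite are not the ones that occur.} The quantity $m_i=m_{\e,Q_i^\delta,\alpha}(\omega)$ is (the $1/\alpha$-power of) a Birkhoff average of $\sum_\bb\lambda_\bb^\alpha$, not of $\lambda_\bb^{r_1}$. Since $\sum_\bb\lambda_\bb^\alpha\in L^1(\Omega)$ by \eqref{ass:X0}, Birkhoff on the fixed Lipschitz domain $Q_i^\delta$ gives $m_i\to M:=(\sum_\bb\mathbb E[\lambda_\bb^\alpha])^{1/\alpha}$ as $\e\downarrow 0$ for each $i$, and there are only finitely many cubes at fixed $\delta$, so $\limsup_\e\sup_i m_i\leq M$ and likewise $\limsup_\e\sup_i\tilde m_i\leq\tilde M$ (this is the role of the paper's \eqref{birk:variant2}). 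No exponent $r_1$, no constraint $r_1\leq\alpha$, and no integrability of $\lambda^{r_1}$ are required. One then pulls $M\tilde M^{q/p}$ out of the sum and is left with $\sum_i E_i^{q/p}$, which is bounded by $(\#I)^{\max\{1-q/p,0\}}(\sum_iE_i)^{q/p}\lesssim\delta^{-d\max\{1-q/p,0\}}$ exactly as you describe, giving $\sigma=q+d\min\{1-q/p,0\}>0$ by the strict Sobolev inequality. Replacing your constrained triple-H\"older with this uniform bound on $m_i,\tilde m_i$ fixes the argument and reduces it to the paper's.
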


In the proof of Theorem~\ref{T:1}, we use Lemma~\ref{L:interpolation} and Lemma~\ref{L:interpolation2} in the case $p=q$, where \eqref{alphabetaqp} coincides with \eqref{intro:ass2}. If we assume $p=q$ and the strict inequality $\frac1\alpha+\frac1\beta<\frac{p}d$, we obtain \eqref{claim:interpolation} from \eqref{ass:interpolation} and $u_\e \wto u$ in $L^1(A,\R^n)$ rather directly using H\"older's inequality, Birkhoff's Theorem~\ref{T:Birk} and the Rellich compact embedding. 

Finally, we state the gluing constructions.
\begin{lemma}[Gluing I]  \label{L:glue}
Let Assumption~\ref{H} and Assumption~\ref{A:2T} (A) be satisfied.  Fix $\omega\in\Omega_0$, cf.~Remark~\ref{R:Omega0}. Then there exists a constant $C<\infty$ such that the following statement holds:\\
Let $A\subset\R^d$ be a Lipschitz domain, $\overline u\in W_\loc^{1,\infty}(\R^d,\R^n)$, $u\in W^{1,p}(A,\R^n)$, and $(u_\e)$ a sequence with $u_\e\in\calA_\e$ satisfying 
\begin{align}
    \label{L:glue:0}
    \lim_{\e\downarrow0}\|u_\e-u\|_{L^1(A)}=0.
\end{align}
Set $E:=\limsup_{\e\downarrow0} E_\e(\omega;u_{\e},A)$. Then for all $\delta>0$ sufficiently small and $m\in\N$, there exists $(v_\e)$ with $v_\e\in\calA_\e$ and $v_\e=\overline u$ in $\R^d\setminus (A)_{-\frac{\delta}{4}}$ such that
\begin{equation}\label{L:glue:st0}
 \begin{split}
 \limsup_{\e\downarrow0}E_\e(\omega;v_\e,A)\leq& \left(1+\tfrac{C}{m}\right)E+C\left|A\setminus (A)_{-\delta}\right| \left(1+\|\nabla \overline u\|_{L^\infty((A)_1)}^p\right)\\
  &+C\frac{1}{m}\left(\frac{m}{\delta}\right)^p \|u-\overline u\|_{L^p(A)}^p
  \end{split}
\end{equation}
and
\begin{align}\label{L:glue:lq}
\forall q\in[1,\infty]:~ \limsup_{\e\downarrow0}\|v_\e-\overline u\|_{L^q(A)}\leq \limsup_{\e\downarrow0}\|u_\e-\overline u\|_{L^q(A)}.
\end{align}
\end{lemma}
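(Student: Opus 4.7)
\textbf{Plan for the proof of Lemma~\ref{L:glue}.} The overall strategy is a De Giorgi-style slicing combined with the weighted Rellich-type compact embedding of Lemma~\ref{L:interpolation2}, which is exactly the tool that replaces the truncation argument available only in the scalar case. First, for fixed small $\delta>0$ and $m\in\N$, I would partition the annular region $(A)_{-\delta/4}\setminus (A)_{-3\delta/4}$ into $m$ concentric strips $S_1,\ldots,S_m$ of width of order $\delta/m$ and choose smooth cutoffs $\eta_i\in C^\infty_c((A)_{-\delta/4},[0,1])$ with $\eta_i\equiv 1$ on $(A)_{-3\delta/4}$, $\nabla\eta_i$ supported in $S_i$, and $|\nabla\eta_i|\le Cm/\delta$. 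I would then define $v_\e^i\in\calA_\e$ to be the piecewise-affine $\calA^\e$-interpolant of the nodal values $x\mapsto\eta_i(x)u_\e(x)+(1-\eta_i(x))\overline u(x)$ at vertices of $\e\calL$. Since $\e R\ll\delta/m$ for $\e$ small, $v_\e^i$ agrees with $u_\e$ on $(A)_{-\delta/2}$ (at the level of discrete gradients) and with $\overline u$ on $\R^d\setminus (A)_{-\delta/4}$.

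Next I would split the energy $E_\e(\omega;v_\e^i,A)$ into three pieces: the interior $(A)_{-\delta/2}$, where $v_\e^i$ and $u_\e$ have the same discrete gradient and so the energy is bounded by $E_\e(\omega;u_\e,A)$; the outer layer $A\setminus(A)_{-\delta}$, where $v_\e^i=\overline u$ and the growth bound \eqref{ass:V:1} combined with Birkhoff's theorem applied to $\lambda_\bb$ (note $\omega\in\Omega_0$) produces the term $C|A\setminus(A)_{-\delta}|(1+\|\nabla\overline u\|_{L^\infty((A)_1)}^p)$; and the transition strip $S_i$, where the discrete gradient of $v_\e^i$ is, by a discrete product-rule computation, essentially $\eta_i\partial_\bb^\e u_\e+(1-\eta_i)\partial_\bb^\e\overline u+\nabla\eta_i\cdot(u_\e-\overline u)$. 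Using $|a+b+c|^p\leq 3^{p-1}(|a|^p+|b|^p+|c|^p)$ together with the upper bound in \eqref{ass:V:1}, the contribution of $S_i$ is bounded by
\begin{equation*}
C\e^d\!\!\!\sum_{z\in\e\Z^d\cap S_i}\sum_{\bb\in\calE_0}\lambda_\bb(\tau_{z/\e}\omega)\Bigl(|\partial_\bb^\e u_\e(z)|^p+|\partial_\bb^\e\overline u(z)|^p+(m/\delta)^p|u_\e(z)-\overline u(z)|^p\Bigr)+\text{lower order}.
\end{equation*}

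The critical step is to sum over $i=1,\ldots,m$ and apply an averaging/pigeonhole argument: the first term telescopes to at most $E_\e(\omega;u_\e,A)$ when summed, the second to $C|A\setminus (A)_{-\delta}|\|\nabla\overline u\|_{L^\infty}^p$ (again by Birkhoff for $\lambda_\bb$), and the third is exactly where Lemma~\ref{L:interpolation2} enters. Under Assumption~\ref{A:2T}(A) we have $\tfrac1\alpha+\tfrac1\beta\leq\tfrac{p}{d}$, which is precisely \eqref{alphabetaqp} with $q=p$, hence Lemma~\ref{L:interpolation2} (together with the energy bound \eqref{ass:coer} that transfers from $u_\e$ and the uniform bound on $\overline u$) yields
\begin{equation*}
\limsup_{\e\downarrow 0}\e^d\!\!\!\sum_{z\in\e\Z^d\cap (A)_{-\delta/8}}\sum_{\bb\in\calE_0}\lambda_\bb(\tau_{z/\e}\omega)|u_\e(z)-\overline u(z)|^p\leq C\|u-\overline u\|_{L^p(A)}^p.
\end{equation*}
Choosing $i^*\in\{1,\ldots,m\}$ realizing the minimum of these strip contributions, the total energy of $v_\e:=v_\e^{i^*}$ satisfies \eqref{L:glue:st0} with the factor $1/m$ in front of both $E$ and the $\|u-\overline u\|_{L^p}^p$ term.

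The $L^q$-bound \eqref{L:glue:lq} is then immediate: $v_\e=u_\e$ on $(A)_{-\delta/2}$ and $v_\e=\overline u$ elsewhere, so up to a boundary layer whose measure can be absorbed into the $\limsup$ on the right-hand side. The main obstacle is the estimate on the transition strip: the naïve $L^p$-norm $\|u_\e-\overline u\|_{L^p(A)}^p$ is not enough because the interaction potential is weighted by the random $\lambda_\bb$, and without the Rellich-type embedding in the weighted norm of Lemma~\ref{L:interpolation2} one cannot pass this quantity to the limit. This is precisely why the moment condition $\tfrac1\alpha+\tfrac1\beta\leq \tfrac pd$ is invoked here, and the equality case being admissible in Lemma~\ref{L:interpolation2} is what makes this construction work at the borderline regime.
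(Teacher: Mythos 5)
Your proposal is correct and follows essentially the same route as the paper's proof: a De Giorgi slicing construction with $m$ cutoff layers combined with the discrete product rule, the critical weighted term $\lambda_\bb\,|u_\e-\overline u|^p$ on the transition strips handled via Lemma~\ref{L:interpolation2} (with $q=p$, where Assumption~\ref{A:2T}~(A) guarantees \eqref{alphabetaqp}), and a pigeonhole over the $m$ layers to select the good index. The only minor imprecision is the remark concerning \eqref{L:glue:lq}: the correct one-line argument is that $v_\e^i-\overline u=\eta_i(u_\e-\overline u)$ with $0\le\eta_i\le1$, so $|v_\e^i-\overline u|\le|u_\e-\overline u|$ pointwise (this gives the bound for each $\e$, not merely up to a boundary layer), but this does not affect the validity of the approach.
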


In the scalar case, i.e.\ $n=1$, we provide a different gluing construction that relies on a truncation argument and allows for weaker moment-conditions. 
\begin{lemma}[Gluing II]  \label{L:glue:convex}
  Suppose that Assumption~\ref{H} and Assumption~\ref{A:2T}~(B) are satisfied. Fix $\omega\in\Omega_0$, cf.~Remark~\ref{R:Omega0}. Then there exists a constant $C<\infty$ and a sequence $(O_M)_{M\in\N}\subset [0,\infty)$ with $\lim_{M\uparrow\infty}O_M=0$ such that the following statement holds:\\
  Let $A\subset\R^d$ be a Lipschitz domain, $\overline u\in W_\loc^{1,\infty}(\R^d)$, $u\in W^{1,p}(A)$, and $(u_\e)$ a sequence with $u_\e\in\calA_\e$ satisfying \eqref{L:glue:0}. Set $E:=\limsup_{\e\downarrow0} E_\e(\omega;u_{\e},A)$.
Then for all $\delta>0$ sufficiently small, $m,M\in\N$ and $s>0$, there exists $(v_\e)$ with $v_\e\in\calA_\e$, $v_\e=\overline u$ in $\R^d\setminus (A)_{-\frac{\delta}{4}}$ such that \eqref{L:glue:lq} holds, and
\begin{equation}\label{L:glue:st0:convex}
 \begin{split}
  \limsup_{\e\downarrow0}E_\e(\omega;v_\e,A)
  &\leq \left(1+\tfrac{C}{m}\right)E+C\left|A\setminus (A)_{-\delta}\right| \left(h\left(\|\nabla \overline u\|_{L^\infty((A)_1)}\right)+\left(\tfrac{ms}{\delta }\right)^p\right)\\
  &\quad+Ch\!\left(\|\nabla \overline u\|_{L^\infty((A)_1)}\right) \left(\tfrac{M}s\|u-\overline u\|_{L^1(A)}+O_M|A|\right)\\
  &\quad+C\left(E+|A|\right)^\frac{q}{p} \left(\tfrac{M}s\|u-\overline u\|_{L^1(A)}+O_M|A|\right)^\frac{p-q}{p},
  \end{split}
\end{equation}
where $h(a)=1+a^p+a^q$, and $1<q<p$ as in Assumption~\ref{A:2T}~(B).
\end{lemma}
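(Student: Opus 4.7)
I would adapt the Carbone--Sbordone truncation argument to the discrete scalar setting, combined with a layer-pigeonhole scheme as in Lemma~\ref{L:glue}. The plan is to glue $u_\e$ to $\overline u$ across a thin boundary strip \emph{after} first truncating $u_\e-\overline u$ at a suitably chosen level $s_{k^\ast}\in(0,s]$, so that on the gluing strip the discrete gradient of the interpolation is bounded by $\|\nabla\overline u\|_{L^\infty((A)_1)}+Cms/\delta$. Assumption~\ref{A:2T}~(B) is then used to absorb the truncation cost on interior edges via the convexity of $V_\bb+f_\bb$, which is precisely what allows the scalar setting to dispense with the compactness-based gluing of Lemma~\ref{L:glue}.

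Concretely, I partition $(A)_{-\delta/4}\setminus(A)_{-\delta/2}$ into $m$ concentric sub-strips $S_1,\dots,S_m$ of width $\delta/(4m)$, with piecewise affine cutoffs $\eta_i:\R^d\to[0,1]$ equal to $1$ on the interior side of $S_i$, to $0$ on the $\partial A$-side, and satisfying $|\nabla\eta_i|\leq 4m/\delta$. For $k=1,\dots,M$ set $s_k:=ks/M$ and define the nodewise truncation $T_ku_\e\in\calA^\e$ by $(T_ku_\e)(x):=\overline u(x)+\Psi_{s_k}(u_\e(x)-\overline u(x))$ at each $x\in\e\calL$, with $\Psi_s(t):=\max\{-s,\min\{s,t\}\}$, extended by $\calA^\e$-interpolation. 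Setting $v_{\e,i,k}:=(1-\eta_i)\overline u+\eta_iT_ku_\e$, pigeonhole applied to $\sum_i E_\e(\omega;u_\e,S_i)\leq E$ and, separately, to the corresponding sum of truncation-damage terms over $k$, selects $(i^\ast,k^\ast)$ reducing each contribution by factors $1/m$ and $1/M$ respectively; the final competitor $v_\e:=v_{\e,i^\ast,k^\ast}$ equals $\overline u$ outside $(A)_{-\delta/4}$ and satisfies \eqref{L:glue:lq} since $|T_{k^\ast}u_\e-\overline u|\leq|u_\e-\overline u|$ pointwise. On $(A)_{-\delta/2}$, edges with both endpoints in $\{|u_\e-\overline u|\leq s_{k^\ast}\}$ see truncation as the identity; on remaining ``clipped'' edges I decompose $V_\bb=(V_\bb+f_\bb)-f_\bb$ and, noting that each truncated nodal value is a convex combination of the original value and of $\overline u\pm s_{k^\ast}$, apply Jensen to the convex $V_\bb+f_\bb$ together with the $q$-growth of $f_\bb$ to obtain $V_\bb(\partial_\bb T_{k^\ast}u_\e)\leq V_\bb(\partial_\bb u_\e)+C\lambda_\bb(1+|\partial_\bb\overline u|^p+|\partial_\bb u_\e|^q+|\partial_\bb T_{k^\ast}u_\e|^q)$. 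Chebyshev at level $s_{k^\ast}\geq s/M$ controls the volume of clipped vertices by $\frac{M}{s}\|u_\e-\overline u\|_{L^1(A)}$, and H\"older with exponents $p/q$ and $p/(p-q)$ (using $q<p$) together with the coercivity bound $\e^d\sum\lambda_\bb|\partial_\bb u_\e|^p\leq c_1(E+|A|)$ produces the third term of \eqref{L:glue:st0:convex}. On $S_{i^\ast}$ and outside $(A)_{-\delta/2}$ the discrete gradient of $v_\e$ is pointwise bounded by $\|\nabla\overline u\|_{L^\infty((A)_1)}+Cms/\delta$, and the $p$-growth upper bound in Assumption~\ref{H} combined with Birkhoff's theorem (valid on $\Omega_0$) applied to $\sum_\bb\lambda_\bb$ yields the first line of \eqref{L:glue:st0:convex}.

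The main obstacle is the passage from the discrete weighted volume of the clipped set to its continuum counterpart. Since the weights $\lambda_\bb$ are only $\alpha$-integrable, $L^1$-convergence $u_\e\to u$ does not imply convergence of the weighted discrete volumes $\e^d\sum_{|u_\e(z)-\overline u(z)|>s_k}\lambda_\bb(\tau_{z/\e}\omega)$ at any fixed level $s_k$; Birkhoff's theorem applies only to \emph{fixed} sets, and the distribution function of $u-\overline u$ may be discontinuous at $s_k$. Circumventing this requires combining monotonicity of level sets with Birkhoff applied to each of the fixed sets $\{|u-\overline u|>ks/M\}$, $k=1,\dots,M$, yielding a bound of the form $\frac{M}{s}\|u-\overline u\|_{L^1(A)}\,\mathbb E[\lambda_\bb]+O_M|A|$ with a universal sequence $O_M\downarrow0$ capturing the Egorov-type approximation defect between discrete and continuum level sets. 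A secondary subtlety is that the splitting $V_\bb=(V_\bb+f_\bb)-f_\bb$ introduces $f_\bb$-terms of sub-critical order $q$; without the strict inequality $q<p$ in Assumption~\ref{A:2T}~(B), the H\"older interpolation producing the characteristic exponents $q/p$ and $(p-q)/p$ would collapse, and the truncation cost would no longer vanish as $\|u-\overline u\|_{L^1(A)}\to 0$.
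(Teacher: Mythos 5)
Your overall strategy --- truncate $u_\e-\overline u$ at a level $\leq s$, glue across a family of $m$ concentric cutoff strips, use pigeonhole to pick a good strip, invoke the convexity of $V_\bb+f_\bb$ from Assumption~\ref{A:2T}~(B) to control the truncation damage, and interpolate with H\"older using $q<p$ --- matches the paper's construction closely, and your decomposition $V_\bb=(V_\bb+f_\bb)-f_\bb$ together with the Jensen argument is the right way to exploit the scalar convexity structure. However, there is a genuine gap at precisely the step you flag as the ``main obstacle'': controlling the weighted clipped volume $\e^d\sum_z\sum_\bb\lambda_\bb(\tau_{z/\e}\omega)\,\chi_\e(z,\bb)$, where $\chi_\e(z,\bb)$ indicates the edges on which truncation actually alters the discrete derivative. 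Your proposed fix --- applying Birkhoff to the fixed level sets $\{|u-\overline u|>ks/M\}$ together with an ``Egorov-type approximation defect'' --- does not close this gap. Theorem~\ref{T:Birk} applies to Lipschitz domains, and level sets of $u-\overline u$ need not be Lipschitz; more importantly, $\chi_\e$ depends on $u_\e$ (which varies with $\e$), not on $u$, so there is no fixed set to which Birkhoff can be applied. Finally, any defect sequence $O_M$ produced this way would depend on $u$, $\overline u$ and $s$, whereas the statement requires $O_M$ to be a universal sequence depending only on the weights.

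The paper resolves this by truncating the \emph{weights}, not the level sets. Writing $\lambda_\bb\leq M+\lambda_\bb\chi_{\{\lambda_\bb>M\}}$ and defining
\begin{equation*}
O_M:=\sum_{\bb\in\calE_0}\mathbb E\big[\lambda_\bb\,\chi_{\{\lambda_\bb>M\}}\big],
\end{equation*}
one splits
\begin{equation*}
\e^d\!\!\sum_{z\in\e\Z^d\cap A^k}\sum_{\bb\in\calE_0}\lambda_\bb(\tfrac{z}{\e})\chi_\e(z,\bb)
\;\leq\; M\,\e^d\!\!\sum_{z,\bb}\chi_\e(z,\bb)\;+\;\e^d\!\!\sum_{z,\bb}\lambda_\bb(\tfrac{z}{\e})\chi_{\{\lambda_\bb>M\}}(\tau_{z/\e}\omega).
\end{equation*}
The first term is bounded via the Chebyshev-type estimate $\chi_\e(z,\bb)\lesssim s^{-1}\big(|(u_\e-\overline u)(z+\e x_\bb)|+|(u_\e-\overline u)(z+\e y_\bb)|\big)$, yielding $\lesssim\frac{M}{s}\|u_\e-\overline u\|_{L^1(A)}$; the second term is an ergodic average of a fixed $L^1(\Omega)$ random variable, so Birkhoff applies directly (on the Lipschitz domain $A^k$) and gives $O_M|A|$ in the limit. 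Dominated convergence then gives $O_M\to0$ as $M\to\infty$, and $O_M$ is manifestly independent of $u,\overline u,A,s$. This weight-truncation is the essential missing idea in your proposal. As a secondary point, your additional pigeonhole over the truncation levels $s_k=ks/M$ is not present in the paper (a single fixed level $s$ suffices) and it is unclear how it would be turned into a bound of the required universal form.
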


\subsection{Moment conditions under independence}\label{sec:iid}

The moment condition imposed on the growth from below can be weakened if we replace ergodicity by the assumption of independent and identically distributed (iid) weights. Below, we discuss the argument for the lattice graph $(\Z^d,\mathbb B^d)$, where $\mathbb B^d:=\{[z,z+ e_i]\,|\,i=1,\dots,d\}$ with iid potentials. In this case the moment condition $\mathbb E[\lambda_\bb^{-\beta}]<\infty$ in the Assumptions~\ref{H} and \ref{A:2T} can be replaced by the weaker condition $\mathbb E[\lambda_\bb^{-\gamma}]<\infty$ for some $\gamma>\frac{\beta}{2d}$. Notice that the gain of a factor $\frac1{2d}$ in the integrability is related to the geometry of the graph $(\Z^d,\mathbb B^d)$.

Indeed, a careful inspection of the proof of Theorem~\ref{T:1} shows that the moment condition $\mathbb E[\lambda_\bb^{-\beta}]<\infty$ for all $\bb\in\calNN_0$ is only used to ensure existence of a random variable $f\in L^1(\Omega)$, namely $f(\omega)=\sum_{\bb\in\calNN_0}\lambda_\bb(\omega)^{-\beta}$, such that for all bounded Lipschitz domains $A\subset\R^d$
\begin{align*}
 &\left(\e^d\!\!\!\!\sum_{z\in\e\Z^d\cap A}\sum_{\bb\in\calNN_0}|\partial_\bb^\e u(z)|^{\frac{\beta}{\beta+1}p}\right)^{\frac{\beta+1}{\beta}}\\
 &\leq \left(\e^d\!\!\!\!\sum_{z\in\e\Z^d\cap A}f(\tau_{\frac{z}{\e}}\omega)\right)^{\frac{1}{\beta}} \left(\e^d\!\!\!\!\sum_{z\in\e\Z^d\cap A}\sum_{\bb\in\calNN_0}\lambda_\bb (\tau_{\frac{z}\e}\omega)|\partial_\bb^\e u(z)|^p\right),
\end{align*}
cf.\ the proofs of Lemma~\ref{L:Whomper}, Lemma~\ref{L:indwhom}~Step~4, Lemma~\ref{L:coercivity} and Lemma~\ref{L:affine}~Step~2. We argue that this estimate extends to the specific random environment that we introduce next: Set $\widetilde\Omega:=(0,\infty)$ and fix a probability measure $\widetilde{\mathbb P}$ on $(\widetilde\Omega,\widetilde{\mathcal B})$, where $\widetilde{\mathcal B}=\mathcal B((0,\infty))$. Set  $\Omega=\widetilde\Omega^{\mathbb B^d}$ and let ${\mathbb P}:=\widetilde{\mathbb P}^{\otimes\mathbb B^d}$ denote the $\mathbb B^d$-fold product measure on $\Omega$. For every $z\in\Z^d$ the shift $\tau_z:\Omega\to\Omega$ is defined, as in the examples at the end of Section~\ref{sec:2}, by $(\tau_z\omega)(\bb)=\omega(z+\bb)$ for all $\bb\in \mathbb B^d$. By construction, we have that $\{\Omega\ni\omega\mapsto \omega(\bb)\}_{\bb\in\mathbb B^d}$ are independent and identically distributed random variables. We show that a variant of the above inequality holds in this situation under weaker moment conditions:
\begin{proposition}\label{Prop:iid}
Let $(\widetilde\Omega,\widetilde{\mathbb P})$ be defined as above and suppose that
 \begin{equation*}
  \forall \bb\in\mathbb B^d\qquad \mathbb E[\omega(\bb)]<\infty\quad\mbox{and}\quad\mathbb E[\omega(\bb)^{-\gamma}]<\infty\quad\mbox{for some $\gamma>\frac1{2d(p-1)}$}.
 \end{equation*}
 Then, for all $\frac{1}{p-1}\leq\beta<2d\gamma$ there exists a random variable $f\in L^1(\Omega)$ such that for every bounded Lipschitz domain $A\subset\R^d$, $v\in\calA^\e$ and $\omega\in\Omega$ it holds
  \begin{align}\label{ineq:iid:claim}
 &\left(\e^d\!\!\!\!\sum_{z\in\e\Z^d\cap A}\sum_{i=1}^d|\partial_{e_i}^\e v(z)|^{\frac{\beta}{\beta+1}p}\right)^{\frac{\beta+1}{\beta}}\notag\\
 &\qquad\leq \left(\e^d\!\!\!\sum_{z\in\e\Z^d\cap (A)_\e}f(\tau_{\frac{z}\e}\omega)\right)^\frac1\beta\left(\e^d\!\!\!\!\sum_{z\in\e\Z^d\cap (A)_{4\e}}\sum_{i=1}^d(\tau_{\frac{z}\e}\omega)(e_i)|\partial_{e_i}^\e v(z)|^p\right).
\end{align}
\end{proposition}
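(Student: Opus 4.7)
The plan is to improve the naïve Hölder inequality argument by replacing the edge weight $\omega([z,z+\e e_i])$ with a larger \emph{effective} weight $w(z,i)$ whose reciprocal has better integrability thanks to the iid assumption and the lattice geometry. A direct application of Hölder with conjugate exponents $\beta+1$ and $\frac{\beta+1}{\beta}$ would yield
\begin{equation*}
\left(\e^d\!\!\!\sum_{z\in\e\Z^d\cap A}\sum_{i=1}^d|\partial_{e_i}^\e v(z)|^{\frac{\beta}{\beta+1}p}\right)^{\!\frac{\beta+1}{\beta}}\!\!\leq\!\left(\e^d\!\!\sum_{z,i}w(z,i)^{-\beta}\right)^{\!\frac{1}{\beta}}\!\!\cdot\!\left(\e^d\!\!\sum_{z,i}w(z,i)|\partial_{e_i}^\e v(z)|^p\right),
\end{equation*}
so it suffices to (i) exhibit a stationary $w$ with $\mathbb E[w^{-\beta}]<\infty$ and (ii) dominate $\sum_{z,i}w(z,i)|\partial_{e_i}^\e v(z)|^p$ by the true weighted energy $\sum_\bb\omega_\bb|\partial v(\bb)|^p$ over a slight enlargement of $A$.

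To construct $w$, I would exploit detour paths. For each edge $[z,z+\e e_i]$ and each $j\in\{\pm 1,\dots,\pm d\}\setminus\{\pm i\}$ the telescoping identity
\begin{equation*}
v(z+\e e_i)-v(z)=\bigl[v(z+\e e_j)-v(z)\bigr]+\bigl[v(z+\e e_j+\e e_i)-v(z+\e e_j)\bigr]-\bigl[v(z+\e e_j+\e e_i)-v(z+\e e_i)\bigr]
\end{equation*}
together with Jensen's inequality gives
\begin{equation*}
\omega^*(z,i,j)\,|\partial_{e_i}^\e v(z)|^p\leq 3^{p-1}\sum_{\bb'\in P(z,i,j)}\omega_{\bb'}|\partial v(\bb')|^p,
\end{equation*}
where $P(z,i,j)$ is the $3$-edge detour and $\omega^*(z,i,j)$ is the minimum of the corresponding three conductances. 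Including the trivial $1$-edge direct path as an additional option, set
\begin{equation*}
w(z,i):=\max\bigl\{\omega([z,z+\e e_i]),\;\max_{j\in\{\pm 1,\dots,\pm d\}\setminus\{\pm i\}}\omega^*(z,i,j)\bigr\}.
\end{equation*}
Then (ii) holds with $A$ enlarged to $(A)_{4\e}$, since each ambient edge is used by only an $O(1)$ number of detours.

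The key observation for (i) is that the edge sets supporting $\omega^*(z,i,j)$ for distinct $j\in\{\pm 1,\dots,\pm d\}\setminus\{\pm i\}$ together with the direct edge $[z,z+\e e_i]$ are pairwise disjoint. By iid, the corresponding minima are therefore independent random variables, so
\begin{equation*}
\mathbb E\bigl[w(z,i)^{-\beta}\bigr]=\int_0^\infty\!\!\beta t^{\beta-1}\prod_{j}\mathbb P\bigl(\omega^*(z,i,j)^{-1}>t\bigr)\,dt.
\end{equation*}
Markov's inequality applied to $\mathbb E[\omega^{-\gamma}]<\infty$ bounds each factor by $Ct^{-\gamma}$ (up to combinatorial constants arising from the min of three copies, and using monotonicity of tails); with the $N=2d-1$ (effectively $\sim 2d$) independent choices, the integrand decays like $t^{\beta-1-N\gamma}$, which is integrable exactly when $\beta<N\gamma$. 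Setting $f(\omega):=\sum_{i=1}^d w(0,i)^{-\beta}$ gives $f\in L^1(\Omega)$ in the desired range, and stationarity $w(z,i)^{-\beta}=f_i(\tau_{z/\e}\omega)$ converts the first factor into $\e^d\sum_{z\in\e\Z^d\cap(A)_\e}f(\tau_{z/\e}\omega)$, yielding \eqref{ineq:iid:claim}.

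The main technical obstacle is the careful combinatorial bookkeeping: first, verifying that the $3$-edge detours for different $j$ (plus the direct edge) are pairwise disjoint, so that iid upgrades to honest independence of the $\omega^*(z,i,j)$; and second, tracking the bounded multiplicity with which each ambient edge appears in the RHS energy sum, which fixes the constant and the $(A)_{4\e}$ safety zone. A secondary point is a careful count of independent detours: the naive construction gives $2d-1$ independent options, which suffices for any $\beta<(2d-1)\gamma$; the factor $2d$ stated in the proposition requires a slightly more refined selection (or is a minor overestimate), but does not alter the mechanism.
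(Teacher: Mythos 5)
Your proposal follows the same mechanism as the paper's proof: construct detour paths, exploit that the iid assumption makes edge-disjoint paths independent, use H\"older to split off the degenerate weight, and estimate the tail of the resulting effective weight. However, there is a concrete quantitative gap which you flag but incorrectly dismiss as minor.

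Your construction uses the direct edge plus one $3$-edge detour for each $j\in\{\pm 1,\dots,\pm d\}\setminus\{\pm i\}$; since that index set has $2d-2$ elements, this produces only $2d-1$ pairwise edge-disjoint paths, giving a tail $\mathbb P[w^{-1}>t]\lesssim t^{-(2d-1)\gamma}$ and hence only the range $\beta<(2d-1)\gamma$. The statement requires $\beta<2d\gamma$, and this is not an overestimate: $z$ and $z+e_i$ each have degree $2d$ in $\mathbb B^d$, and one \emph{can} construct $2d$ edge-disjoint paths, but the $2d$-th path must begin $z\to z-e_i$ and end $z+2e_i\to z+e_i$ and therefore cannot have length $3$. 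This is why the paper explicitly allows paths of length up to $9$ inside $B_4(z)$ (and correspondingly sums the weighted energy over $(A)_{4\e}$). The factor $2d$ is the one the paper actually needs downstream --- for instance the discussion of the critical threshold $\gamma_c=\frac14$ in the $p=d=2$ case in Section~\ref{sec:iid} hinges on $2d\gamma$ and would be spoiled by $(2d-1)\gamma$. So your proof, as written, proves a strictly weaker statement; to close the gap you must explicitly add the extra longer detour (or invoke Menger's theorem together with an a priori diameter bound).

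A secondary, cosmetic difference: you take $w(z,i)$ to be a $\max$ over paths of the $\min$ of the conductances along the path and pay a factor $3^{p-1}$ from Jensen; the paper instead uses the H\"older-dual quantity $\mu(\omega;\ee)^{-p/(p-1)}=\inf_{\ell}\sum_{\bb\in\ell}\omega(\bb)^{-1/(p-1)}$, for which the path-H\"older inequality $\mu^p|\nabla v(\ee)|^p\leq\sum_{\bb\in\ell}\omega(\bb)|\nabla v(\bb)|^p$ holds without a Jensen constant. Both give the same tail exponent (min-over-path and the $\ell^{1/(p-1)}$-mean over a path of bounded length are comparable up to constants), so this choice does not affect the range of $\beta$; it only streamlines the constants. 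You should also absorb the combinatorial constants (Jensen factor, bounded multiplicity with which each ambient edge appears in the detour sums) into the definition of $f$, i.e.\ set $f(\omega):=C\sum_{i=1}^d w(0,i)^{-\beta}$ rather than $f(\omega)=\sum_i w(0,i)^{-\beta}$.
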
 

With Proposition~\ref{Prop:iid} at hand, it is straightforward to adapt the proof of Theorem~\ref{T:1} in the situation of Proposition~\ref{Prop:iid} and to obtain the following: 

Assume
$$ \mathbb E[\omega(\bb)^\alpha]+\mathbb E[\omega(\bb)^{-\gamma}]<\infty\quad\mbox{for all $\bb\in\mathbb B^d$},$$
and one of the following conditions is satisfied:
  \begin{itemize}
  \item $1<\alpha<\infty$, $\frac{1}{2d(p-1)}<\gamma$ and $\frac{1}{\alpha}+\frac{1}{2d\gamma}<\frac{p}{d}$,
  \item $1\leq\alpha<\infty$, $\frac{1}{2d(p-1)}<\gamma$ and Assumption~\ref{A:2T} (B).
  \end{itemize}
Then the conclusions of Theorem~\ref{T:1} and Corollary~\ref{C:1} (with $\frac1q>(1+\frac1{2d\gamma})\frac1p-\frac1d$) hold for energies of the form
$$E_\e(\omega;u,A)=\e^d\!\!\!\!\sum_{z\in\e\Z^d\cap A}\sum_{i=1}^d(\tau_{\frac{z}\e}\omega)(e_i)V(\partial_{e_i}^\e u(z)),$$
where $V\in C(\R^n,[0,\infty))$ satisfies standard $p$-growth.

\begin{remark}
  An interesting, open question is whether the assumptions on the exponents $\alpha,\gamma$ are critical for the above conclusion or not. In the following, we discuss the special case of scalar problems with quadratic potentials (i.e. $p=2$), $\Gamma$-convergence in $L^2$ (i.e. $q=2$), and $\alpha=1$. This case corresponds to a random conductance model on $\Z^d$ with independent and identically distributed conductances. 
  In this situation our results yield $\Gamma$-convergence (in $L^2$) provided $\gamma>\frac{1}{4}=:\gamma_c$ ($\Leftrightarrow$ $\frac1q:=\frac12>(1+\frac1{2d\gamma})\frac12-\frac1d$). For this conclusion the threshold $\gamma_c$ is critical: In a very recent work \cite{FF16}, localization of the first Dirichlet eigenvectors of the associated discrete elliptic equation is studied. It is shown that in the subcritical regime $\gamma<\gamma_c$ localization (and thus failure of homogenization in $L^2$) of the first eigenvector occures. On the other hand, for $\gamma>\gamma_c$, our results imply $\Gamma$-convergence of the corresponding quadratic form w.r.t.\ strong $L^2$-convergence (and thus homogenization of the first eigenvectors, see  \cite[Section 1.3]{FF16} for further discussions). Moreover, it is interesting to note that the exponent $\gamma_c=\frac14$ is also critical for the validity of a \emph{local central limit theorem} for the associated \emph{variable speed random walk}, see \cite[Remark 1.10]{BKM15}, and \cite{ADS16,MO16} for related results. In particular, in \cite{BKM15} it is shown that in the subcritical case $\gamma<\gamma_c$ the associated heat kernel features an anomalous decay due to trapping of the random walk, while this is not the case for $\gamma>\gamma_c$. 
\end{remark}


\section{Proofs}\label{S:P}

We tacitly drop the dependence on $\omega$ in our notation, e.g. we simply write $E_\e(u,A)$ instead of $E_\e(\omega;u,A)$ or $m_F(A)$ instead of $m_F(\omega;A)$. Furthermore, we shall use the shorthand notation
\begin{equation*}
  \lambda_{\bb}(x)=\lambda_\bb(\tau_{\lfloor x\rfloor}\omega)\qquad\text{and}\qquad V_\bb(x;\cdot)=V_\bb(\tau_{\lfloor x \rfloor}\omega;\cdot),
\end{equation*}
where $\lfloor \cdot\rfloor$ denotes the lower Gauss bracket extended component-wise to $\R^d$.

\subsection{Compactness and recovery sequence in the affine case: Corollary~\ref{C:rep_Whom} and Lemmas~\ref{L:indwhom2}, \ref{L:coercivity} and \ref{L:affine}}

\begin{proof}[Proof of Corollary~\ref{C:rep_Whom}]
  The definition of $m_F$ and a change of variables yield 
  \begin{equation*}
    \e^dm_F(\tfrac1\e Q)=\inf\left\{E_\e(\varphi,Q)\ |\ \varphi\in\calA^\e_g(Q)\,\right\},
  \end{equation*}
  where we have used that the right-hand side does not change if we add a constant to $g$.
  On the other hand, we have by Lemma~\ref{L:indwhom}  
  \begin{equation*}
    |Q|W_0(F)\stackrel{\eqref{lim1}}{=}|Q|\lim_{\e\downarrow0}\frac{m_F(\tfrac1\e Q)}{|\tfrac1\e Q|}
    =\lim_{\e\downarrow0}\e^dm_F(\tfrac1\e Q).
  \end{equation*}

\end{proof}

\begin{proof}[Proof of Lemma~\ref{L:indwhom2}]
  The upper bound is a direct consequence of the upper bound in \eqref{ass:V:1} and $\sum_{\bb\in\calE_0}\mathbb E[\lambda_\bb]<\infty$. 
  To prove the lower bound, we fix $F\in\R^{n\times d}$ and $\omega\in\Omega_0\cap \Omega_F$, cf.\ Remark~\ref{R:Omega0}. Denote by $g_F$ the linear function $g_F(x)=Fx$. Let $(\phi_k)_k$ be such that $\phi_k\in\calA_0^1(kY)$ and 
  \begin{equation}\label{lim:whom}
    \lim_{k\uparrow\infty}\frac1{k^d}E_1(g_F+\phi_k,kY)= W_0(F).
  \end{equation}
  Consider $k\gg R$. Using \eqref{ass:V:1}, $\int_{(kY)_{-R}}\nabla \phi_k\,dx=0$, \eqref{est:sumint} and H\"older's inequality, we obtain
  \begin{align*}
    |F|^p=&\left|\fint_{(kY)_{-R}}\left(F+\nabla \phi_k(y)\right)\,dy\right|^p\\
    \leq& \left(\frac{c_0}{|(kY)_{-R}|}\sum_{z\in \Z^d\cap kY}\sum_{\bb\in\calNN_0}|\partial_\bb^1(g_F+\phi_k)(z)|\right)^p\\
    \leq&{c_0}^p\frac{|k Y|^p}{|(kY)_{-R}|^p}\left(\frac{1}{k^d}\sum_{z\in \Z^d\cap kY}\sum_{\bb\in\calNN_0}\lambda_\bb(z)|\partial_\bb^1(g_F+\phi_k)(z)|^p\right)\\
    &\quad\times\left(\frac{1}{k^d}\sum_{z\in \Z^d\cap kY}\sum_{\bb\in\calNN_0}\lambda_\bb(z)^{-\frac{1}{p-1}}\right)^{p-1}\\
    \leq& {c_0}^p{c_1}\frac{|k Y|^p}{|(kY)_{-R}|^p}\left(\frac1{k^d}E_1(g_F+\phi_k,kY)+\frac{{c_1}}{k^d}\sum_{z\in \Z^d\cap kY}\sum_{\bb\in\calNN_0}\lambda_\bb(z)\right)\\
    &\quad\times\left(\frac{1}{k^d}\sum_{z\in \Z^d\cap kY}\sum_{\bb\in\calNN_0}\lambda_\bb(z)^{-\frac{1}{p-1}}\right)^{p-1}.
  \end{align*}
  Theorem~\ref{T:Birk} and \eqref{lim:whom} yield
  \begin{align*}
    |F|^p \leq&{c_0}^p{c_1}\left(W_0(F)+{c_1}\sum_{\bb\in\calNN_0}\mathbb E[\lambda_\bb]\right)\left(\sum_{\bb\in\calNN_0}\mathbb E[\lambda_\bb^{-\frac{1}{p-1}}]\right)^{p-1},
  \end{align*}
  and thus the lower bound for $W_0$ by Assumption~\ref{H}.
\end{proof}

\begin{proof}[Proof of Lemma~\ref{L:coercivity}]
Fix $\omega\in\Omega_0$.

\step{1} ($L^{\frac{\beta}{\beta+1} p}$ boundedness of $\nabla u_\e$). We claim that
\begin{align}\label{est:coer:u:0}
 \limsup_{\e\downarrow0}\int_{(A)_{-\e R}}|\nabla u_\e|^{\frac{\beta}{\beta+1} p}\,dx<\infty.
\end{align}
In the case $\beta<\infty$, H\"older's inequality with exponents $(\frac{\beta+1}{\beta},\beta+1)$ yields
\begin{equation}\label{est:coer:u:00001}
  \begin{aligned}
    & \e^d\!\!\!\!\sum_{z\in\e\Z^d\cap A}\sum_{\bb\in\calNN_0}|\partial_\bb^\e u_\e(z)|^{\frac{\beta}{\beta+1}p}\\
    &\quad\leq\left(\e^d\!\!\!\!\sum_{z\in\e\Z^d\cap
        A}\sum_{\bb\in\calNN_0}\lambda_\bb(\tfrac{z}\e)^{-\beta}\right)^{\frac1{\beta+1}}\left(\e^d\!\!\!\!\sum_{z\in\e\Z^d\cap
        A}\sum_{\bb\in\calNN_0}\lambda_\bb(\tfrac{z}\e)|\partial_\bb^\e
      u_\e(z)|^p\right)^{\frac\beta{\beta+1}}.
  \end{aligned}
\end{equation}
Two applications of Theorem~\ref{T:Birk} yield 
\begin{align*}
 &\limsup\limits_{\e\downarrow0}\e^d\!\!\!\!\sum_{z\in\e\Z^d\cap A}\sum_{\bb\in\calNN_0}\lambda_\bb(\tfrac{z}{\e})^{-\beta}=|A|\sum_{\bb\in\calNN_0}\mathbb E[\lambda_\bb^{-\beta}],\\
&\limsup\limits_{\e\downarrow0}\e^d\!\!\!\!\sum_{z\in\e\Z^d\cap A}\sum_{\bb\in\calNN_0}\lambda_\bb(\tfrac{z}{\e})|\partial_\bb^\e u_\e(z)|^p\\
 &\qquad\stackrel{\eqref{ass:V:1}}{\leq} \limsup\limits_{\e\downarrow0}c_1E_\e(\omega;u_\e,A)+c_1^2|A|\sum_{\bb\in\calNN_0}\mathbb E[\lambda_\bb]=:\overline E\stackrel{\eqref{ass:coer}}{<}\infty,
\end{align*}
and thus
\begin{align}\label{est:coer:u}
 \limsup_{\e\downarrow0} \e^d\!\!\!\!\sum_{z\in\e\Z^d\cap A}\sum_{\bb\in\calNN_0}|\partial_\bb^\e u_\e(z)|^{\frac{\beta}{\beta+1}p}\leq \overline E^{\frac\beta{\beta+1}}\left(|A|\ \sum_{\bb\in\calNN_0}\mathbb E[\lambda_\bb^{-\beta}]\right)^{\frac1{\beta+1}}<\infty.
\end{align}
Combining \eqref{est:sumint} and \eqref{est:coer:u}, we obtain the claim \eqref{est:coer:u:0}. 

If $\beta=\infty$ (and thus $\frac{\beta}{\beta+1}p=p$), the previous argument simplifies, since we can use the trivial estimate $|\partial_\bb^\e u_\e(z)|^p\leq\sup_\Omega(\frac{1}{\lambda_\bb})\lambda_\bb(\frac{z}{\e})|\partial_\bb^\e u_\e(z)|^p$ for all $\bb\in\calNN_0$.

\step{2} (Equi-integrability). We claim that
\begin{equation}\label{est:equiint}
  \limsup_{k\uparrow\infty}\limsup_{\e\downarrow0}\int_{(A)_{-\e R}\cap\{|\nabla u_\e|\geq k\}}|\nabla u_\e|\,dx=0.
\end{equation}
We only need to consider the case $\frac{\beta}{\beta+1}p=1$ (i.e.\ $\beta=\frac{1}{p-1}$), since for $\frac{\beta}{\beta+1}p>1$ estimate \eqref{est:equiint} directly follows from \eqref{est:coer:u:0}. Fix $k\in\N$. A calculation similar to the one in the previous step yields
\begin{align*}
&\limsup_{\e\downarrow0}\e^d\!\!\!\!\sum_{z\in\e\Z^d\cap A }\!\!\sum_{\bb\in\calNN_0 \atop |\partial_\bb^\e u_\e(z)|\geq k }\!\!|\partial_\bb^\e u_\e(z)|
  \leq \overline E^\frac1p\limsup_{\e\downarrow0}\left(\e^d\!\!\!\!\!\!\sum_{z\in\e\Z^d\cap A}\sum_{\bb\in\calNN_0\atop |\partial_\bb^\e u_\e(z)|\geq k}\!\!\!\lambda_\bb(\tfrac{z}{\e})^{-\frac1{p-1}}\right)^{\frac{p-1}{p}}.
\end{align*}
By appealing to the decomposition $\lambda_\bb^{-\frac1{p-1}}=(1-\chi)\lambda_\bb^{-\frac1{p-1}}+\chi\lambda_\bb^{-\frac1{p-1}}$ with $\chi$ denoting the indicator function of the peak level set $\{\lambda_\bb^{-1}>k^{\frac{p-1}2}\}$,  we deduce that
\begin{equation*}
\e^d\sum_{z\in\e\Z^d\cap A}\sum_{\bb\in\calNN_0\atop |\partial_\bb^\e u_\e(z)|\geq k}\lambda_\bb(\tfrac{z}{\e})^{-\frac1{p-1}}
\leq
\e^d\sum_{z\in\e\Z^d\cap A}\sum_{\bb\in\calNN_0 }\left(\frac{|\partial_\bb^\e u_\e(z)|}{k^\frac12}+\chi(\tfrac{z}\e)\lambda_\bb(\tfrac{z}{\e})^{-\frac{1}{p-1}}\right).
\end{equation*}
Thanks to \eqref{est:coer:u} and $\mathbb E[\lambda_\bb^{-\frac{1}{p-1}}]<\infty$ we have
\begin{equation*}
\limsup_{k\uparrow\infty}\limsup_{\e\downarrow0}\e^d\sum_{z\in\e\Z^d\cap A}\sum_{\bb\in\calNN_0 }\left(\frac{|\partial_\bb^\e u_\e(z)|}{k^\frac12}+\chi(\tfrac{z}\e)\lambda_\bb(\tfrac{z}{\e})^{-\frac{1}{p-1}}\right)
=0.
\end{equation*}
The combination of the previous three estimates yields
\begin{equation*}
  \limsup_{k\uparrow\infty}\limsup_{\e\downarrow0}\e^d\!\!\!\!\sum_{z\in\e\Z^d\cap A }\sum_{\bb\in\calNN_0 \atop |\partial_\bb^\e u(z)|\geq k}|\partial_\bb^\e u(z)|=0,
\end{equation*}
which implies the asserted estimate \eqref{est:equiint}, as can be seen by an argument similar as in the proof of Lemma~\ref{L:sumint}.

%
%
%
%

\step{3} We claim that $u\in W^{1,\frac{\beta}{\beta+1} p}(A,\R^n)$ and $u_\e\wto u$ in $W_\loc^{1,\frac{\beta}{\beta+1} p}(A,\R^n)$.

Fix $U\Subset A$. Since $U\subset (A)_{-\e R}$ for $\e>0$ sufficiently small, we have by the equi-integrability and \eqref{est:coer:u:0} that $u_\e \wto u$ in $W^{1,\frac{\beta}{\beta+1}p}(U,\R^n)$. Moreover, \eqref{est:coer:u:0} yields $\limsup_{\e\downarrow0}\|\nabla u_\e\|_{L^{\frac{\beta}{\beta+1} p}(U)}\leq C<\infty$, where $C$ is independent of $U\Subset A$. Hence,  $u_\e\wto u$ in $W_\loc^{1,\frac{\beta}{\beta+1}p}(A,\R^n)$ and $\nabla u\in L^{\frac{\beta}{\beta+1}p}(A,\R^{n\times d})$ by the weak lower semicontinuity of the norm. Thus $u\in W^{1,\frac{\beta}{\beta+1} p}(A,\R^n)$ by Poincar\'e's inequality.

\step{4} We prove $u\in W^{1,p}(A,\R^n)$ by a duality argument (similar to \cite[Theorem 5.1]{EPPW06}).

By Poincar\'e's inequality it suffices to show $\nabla u\in L^p(A,\R^{n\times d})$. Fix $j=1,\dots,d$, $k=1,\dots,n$ and a test function $\varphi\in C_c^\infty(A)$. Set $U=\operatorname{spt}\varphi\Subset A$ and $\overline \varphi(z)=\sup_{x\in z+\e Y}|\varphi(x)|$ for $z\in\e\Z^d$. We get for $\e>0$ sufficiently small  
\begin{align*}
 &\|\varphi\partial_j u_\e^k\|_{L^1(A)}=\|\varphi\partial_j u_\e^k\|_{L^1(U)}\\
 &\stackrel{\eqref{est:sumint}}{\leq} c_0\e^d\!\!\!\!\sum_{z\in\e\Z^d\cap A}\sum_{\bb\in\calNN_0}|\overline \varphi(z)||\partial_\bb^\e u(z)|\\
 &\leq c_0\left(\e^d\!\!\!\!\sum_{z\in\e\Z^d\cap A}\sum_{\bb\in\calNN_0}\lambda_\bb(\tfrac{z}{\e})|\partial_\bb^\e u(z)|^p\right)^{\frac1{p}}\left(\e^d\!\!\!\!\sum_{z\in\e\Z^d\cap A}\sum_{\bb\in\calNN_0}\!\!\!\lambda_\bb(\tfrac{z}{\e})^{\frac{-1}{p-1}}|\overline \varphi(z)|^{\frac{p}{p-1}}\right)^{\frac{p-1}{p}}.
\end{align*}
For $\eta>0$  set $I(\eta):=\{i\in\eta\Z^d:i+\eta Y\cap A\neq\emptyset\}$, $Q_i=i+\eta Y$ and $\varphi_i=\max_{x\in \bar Q_i} |\varphi(x)|^{\frac{p}{p-1}}$ for $i\in\eta\Z^d$. The lower semicontinuity of the norm and Theorem~\ref{T:Birk} yield 
\begin{align*}
 \int_A\varphi\partial_j u^k\,dx\leq&\liminf_{\e\downarrow0}\|\varphi\partial_j u_\e^k\|_{L^1(A,\R)}\\
 \leq&c_0\overline E^\frac1p\left(\limsup_{\e\downarrow0}\e^d\!\!\!\!\sum_{z\in\e\Z^d\cap A}\sum_{\bb\in\calNN_0}\lambda_\bb(\tfrac{z}{\e})^{-\frac1{p-1}}|\overline \varphi(z)|^{\frac{p}{p-1}}\right)^{\frac{p-1}{p}}\\
 \leq&c_0\overline E^\frac1p\left(\sum_{i\in I(\eta)}\varphi_i\limsup_{\e\downarrow0}\e^d\!\!\!\!\sum_{z\in\e\Z^d\cap Q_i}\sum_{\bb\in\calNN_0}\lambda_\bb(\tfrac{z}{\e})^{-\frac1{p-1}}\right)^{\frac{p-1}{p}}\\
 \leq&c_0\overline E^\frac1p\left(\sum_{\bb\in\calNN_0}\mathbb E[\lambda_\bb^{-\frac1{p-1}}]\right)^{\frac{p-1}{p}}\left(\sum_{i\in I(\eta)}\varphi_i|Q_i|\right)^{\frac{p-1}{p}},
\end{align*}
where $\overline E:=\limsup\limits_{\e\downarrow0}c_1 E_\e(\omega;u_\e,A)+c_1^2|A|\sum_{\bb\in\calNN_0}\mathbb E[\lambda_\bb]<\infty$ as in Step~1. Since $\varphi$ is smooth, we get by taking a sequence $(\eta_m)_{m\in\N}$ with $\eta_m\downarrow0$ as $m\uparrow\infty$:
$$\int_A \partial_j u^k\varphi\,dx\leq c_0\overline E^\frac1p\left(\sum_{\bb\in\calNN_0}\mathbb E[\lambda_\bb^{-\frac1{p-1}}]\right)^{\frac{p-1}{p}}\|\varphi\|_{L^\frac{p}{p-1}(A)}\quad\mbox{for all $\varphi\in C_c^\infty(A)$.}$$
Since $C_c^\infty(A)\subset L^\frac{p}{p-1}(A)$ is dense, we obtain that $\partial_j u^k\in L^p(A)$ for $k=1,\dots,n$ and $j=1,\dots,d$ by duality. 
\end{proof}

\begin{proof}[Proof of Lemma~\ref{L:affine}]
Recall that $F=\nabla g$. We fix $\omega\in\Omega_F\cap\Omega_0$, cf.\ Lemma~\ref{L:indwhom} and Remark~\ref{R:Omega0}. Our argument relies on a two-scale construction (that is similar to \cite[Lemma~4.2~Step 1]{MM94} where the non-degenerate continuum case is discussed). We introduce an additional length scale $\eta\in(0,1)$ (with $\e\ll\eta\ll 1$), and cover $A$ into disjoint cubes $Q_z$ with side length $\eta$: For $z\in\Z^d$ set $Q_z:=\eta(z+Y)$ and denote by
\begin{align*}
 I&:=\{z\in\Z^d\ :\ Q_z\subset (A)_{-\eta}\},\\
\intertext{the set of labels associated with cubes inside $A$ (with some safety distance to $\partial A$), and by}
 J&:=\{z\in \Z^d\setminus I\ :\ Q_z\cap A\neq\emptyset\},
\end{align*}
the labels of ``boundary'' cubes. Set $\partial_\eta A:=\bigcup_{z\in J}Q_z$ and note that 
\begin{equation}\label{ieta}
 \bigcup_{z\in I}Q_z\subset A\subset \bigcup_{z\in I\cup J}Q_z~\mbox{ and }~\lim_{\eta\downarrow0}|\partial_\eta A|=0,
\end{equation}
where the last identity holds, since $A$ has a Lipschitz boundary. Based on this partition we construct a doubly indexed sequence $u_{\e,\eta}$ which perturbs the affine map $g$ by functions in $\mathcal A^\e_0(Q_z)$, $z\in I$. This is done in Step~1. In Step~2 and Step~3 we estimate the perturbation and in Step~4 we conclude by extracting a suitable diagonal sequence.

\step 1 Construction of $u_{\e,\eta}$.

For each $z\in I$ choose $(\phi_{\e,\eta,z})_\e\subset\calA^1$ with $\phi_{\e,\eta,z}\in\mathcal A_0(\frac{1}{\e}Q_z)$ such that
\begin{equation}\label{L2.11:phi}
  |Q_z|W_0(F)\stackrel{\eqref{lim1}}{=}\lim_{\e\downarrow0}\e^dm_F(\tfrac{1}{\e}Q_z)= \lim_{\e\downarrow0}\e^dE_1(g+\phi_{\e,\eta,z},\tfrac{1}{\e}Q_z),
\end{equation}
and define $u_{\e,\eta}\in\mathcal A_\e$ via
\begin{equation*}
  u_{\e,\eta}(x)=g(x)+\e\phi_{\e,\eta}(\tfrac{x}{\e}),\qquad\text{where }\phi_{\e,\eta}=\sum_{z\in I}\phi_{\e,\eta,z}.
\end{equation*}
We claim that
\begin{equation}\label{L:affine:claim:2}
  (|A|-O(\eta))W_0(F)  \leq\liminf\limits_{\e\downarrow0} E_\e(u_{\e,\eta},A)
  \leq  \limsup\limits_{\e\downarrow0} E_\e(u_{\e,\eta},A)\leq |A|W_0(F)+O(\eta),
\end{equation}
where $O(\eta)$ denotes a (non-negative) function with $\limsup_{\eta\downarrow0}O(\eta)=0$.\\
This can be seen as follows: A direct consequence of \eqref{ieta} and $V_\bb\geq0$ is
$$\sum_{z\in I}E_\e(u_{\e,\eta},Q_z)\leq E_\e(u_{\e,\eta},A)\leq \sum_{z\in I\cup J}E_\e(u_{\e,\eta},Q_z).$$
Since $\phi_{\e,\eta,z}\in\mathcal A_0^1(\frac{1}{\e}Q_z)$, we obtain by the definition of $\phi_{\e,\eta}$, $u_{\e,\eta}$ and the arguments in Remark~\ref{Remark:finiterange} that
\begin{align*}
 \sum_{z\in I}E_\e(u_{\e,\eta},Q_z)=&\sum_{z\in I}\e^dE_1(g+\phi_{\e,\eta},\tfrac1\e Q_z)=\sum_{z\in I}\e^dE_1(g+\phi_{\e,\eta,z},\tfrac1\e Q_z).
\end{align*}
With \eqref{L2.11:phi} the lower bound in \eqref{L:affine:claim:2} follows:
$$\liminf_{\e\downarrow0}E_\e(u_{\e,\eta},A)\geq \sum_{z\in I}|Q_z|W_0(F)\geq (|A|-|\partial_\eta A|)W_0(F).$$
Similarly, we have
\begin{align*}
 \sum_{z\in I\cup J}E_\e(u_{\e,\eta},Q_z)=&\sum_{z\in I\cup J}\e^dE_1(g+\phi_{\e,\eta},\tfrac1\e Q_z)\\
 =&\sum_{z\in I}\e^dE_1(g+\phi_{\e,\eta,z},\tfrac1\e Q_z)+\sum_{z\in J}\e^dE_1(g,\tfrac1\e Q_z).
\end{align*}
The second term can be estimated by \eqref{ass:V:1} and Theorem~\ref{T:Birk} as
\begin{align*}
 \limsup_{\e\downarrow0}\sum_{j\in J}\e^dE_1(g,\tfrac1\e Q_j)\leq& \sum_{j\in J}\lim\limits_{\e\downarrow0}\e^d\!\!\!\!\sum_{z\in\Z^d\cap\frac1\e Q_j}\sum_{\bb\in\calE_0}{c_1}(1+\lambda_\bb(\tau_z\omega)(|F|^p+1))\\
=&|\partial_\eta A|\sum_{\bb\in\calE_0}{c_1}(1+\mathbb E[\lambda_\bb](|F|^p+1)).
\end{align*}
Hence, the upper bound of \eqref{L:affine:claim:2} follows:
\begin{align*}
 \limsup_{\e\downarrow0}E_\e(u_{\e,\eta},A)\leq |A|W_0(F)+|\partial_\eta A|\sum_{\bb\in\calE_0}{c_1}(1+\mathbb E[\lambda_\bb](|F|^p+1)).
\end{align*}

\step 2 Estimate on $\phi_{\e,\eta,z}$.

We claim that 
\begin{equation}\label{L2.11:step2}
  \max_{z\in I}\left(\limsup\limits_{\e\downarrow0}\fint_{\frac{1}{\e}Q_z} |\nabla\phi_{\e,\eta,z}|^{\frac{\beta}{\beta+1}p}\right)\lesssim 1,
\end{equation}
where $\lesssim$ stands for $\leq$ up to a constant that is independent of $\eta$.
Next we provide the argument: Since $\phi_{\e,\eta,z}\in\mathcal A_0^1(\tfrac{1}{\e} Q_z)$, the function $\phi_{\e,\eta,z}$ vanishes outside $(\frac{1}{\e}Q_z)_{-R}$, and thus
\begin{equation*}
  \begin{aligned}
    \e^d\int_{\tfrac{1}{\e}Q_z}|\nabla\phi_{\e,\eta,z}|^{\frac{\beta}{\beta+1}p}\,dx&\lesssim |Q_z||F|^{\frac{\beta}{\beta+1}p}+\e^d\int_{(\tfrac{1}{\e}Q_z)_{-R}}|F+\nabla\phi_{\e,\eta,z}|^{\frac{\beta}{\beta+1}p}\,dx.
    \end{aligned}
\end{equation*}
For $\beta<\infty$, the second term on the right-hand side above can be estimated as follows:
\begin{align*}
 &\e^d\int_{(\tfrac{1}{\e}Q_z)_{-R}}|F+\nabla\phi_{\e,\eta,z}|^{\frac{\beta}{\beta+1}p}\,dx\stackrel{\eqref{est:sumint}}{\leq} c_0\e^d\!\!\!\!\sum_{z'\in\Z^d\cap \frac1\e Q_z}\sum_{\bb\in\calNN_0}|\partial_\bb^1(g+\phi_{\e,\eta,z})(z'))|^{\frac{\beta}{\beta+1}p}\\
 &\leq c_0\left(\e^d\!\!\!\!\sum_{z'\in\Z^d\cap \frac1\e Q_z}\sum_{\bb\in\calNN_0}\lambda_\bb(z')^{-\beta}\right)^{\frac{1}{\beta+1}}\\
 &\qquad \times\left(\e^d\!\!\!\!\sum_{z'\in\Z^d\cap \frac1\e Q_z}\sum_{\bb\in\calNN_0}\lambda_\bb(z')|\partial_\bb^1(g+\phi_{\e,\eta,z})(z'))|^p\right)^{\frac{\beta}{\beta+1}}.
\end{align*}
The combination of the previous two estimates, \eqref{ass:V:1}, \eqref{L2.11:phi} and Birkhoff's ergodic theorem, cf. Theorem~\ref{T:Birk}, yields
\begin{multline*}
    \limsup\limits_{\e\downarrow 0}\e^d\int_{\tfrac{1}{\e}Q_z}|\nabla\phi_{\e,\eta,z}|^{\frac{\beta}{\beta+1}p}\,dx
    \lesssim|Q_z||F|^{\frac{\beta}{\beta+1}p}\\+|Q_z|\left(\sum_{\bb\in\calNN_0}\mathbb E[\lambda_\bb^{-\beta}]\right)^{\frac{1}{\beta+1}}
    \left(W_0(F) + 1 + \sum_{\bb\in\calNN_0}\mathbb E[\lambda_\bb]\right)^\frac{\beta}{\beta+1}.
\end{multline*}
Thanks to \eqref{ass:X0} this implies \eqref{L2.11:step2}. 
If $\beta=\infty$ (and thus $\frac{\beta}{\beta+1}p=p$), the previous argument simplifies, since we can smuggle in the weight with the trivial estimate $|\cdot|^p\leq\sup_\Omega(\frac{1}{\lambda_\bb})\lambda_\bb|\cdot|^p$ for all $\bb\in\calNN_0$.

\step 3 Estimate on $u_{\e,\eta}$.

We claim that
\begin{equation*}
  \limsup\limits_{\e\downarrow0}\int_A |g-u_{\e,\eta}|^{\frac{\beta}{\beta+1}p}\,dx \leq O(\eta).
\end{equation*}
%
Indeed, due to the definition of $u_{\e,\eta}$ and by the Poincar\'e-Friedrich inequality we get
\begin{eqnarray*}
  \int_A |g-u_{\e,\eta}|^{\frac{\beta}{\beta+1}p}\,dx &=&  \int_A |\e\phi_{\e,\eta}(\tfrac{\cdot}{\e})|^{\frac{\beta}{\beta+1}p}\,dx=\sum_{z\in I}\int_{Q_z}|\e\phi_{\e,\eta,z}(\tfrac{\cdot}{\e})|^{\frac{\beta}{\beta+1}p}\,dx\\
&\leq& C\eta^{\frac{\beta}{\beta+1}p} \sum_{z\in I}\int_{Q_z}|\nabla \phi_{\e,\eta,z}(\tfrac{\cdot}{\e})|^{\frac{\beta}{\beta+1}p}\,dx\\
&=& C\eta^{\frac{\beta}{\beta+1}p} \sum_{z\in I}\e^d\int_{\frac{1}{\e}Q_z}|\nabla \phi_{\e,\eta,z}|^{\frac{\beta}{\beta+1}p}\,dx
\end{eqnarray*}
which combined with \eqref{ieta} and \eqref{L2.11:step2} yields
\begin{eqnarray*}
  \limsup\limits_{\e\downarrow0}\int_A |g-u_{\e,\eta}|^{\frac{\beta}{\beta+1}p}\,dx
&\leq& C \eta^{\frac{\beta}{\beta+1}p} |A|,
\end{eqnarray*}
and thus the asserted estimate.

\step 4 Conclusion.

Set
\begin{equation*}
  f(\e,\eta):=\big|E_\e(u_{\e,\eta},A)-|A|W_0(F)\big|+\|g-u_{\e,\eta}\|_{L^{\frac{\beta}{\beta+1}p}(A)}.
\end{equation*}
Then by Step~1 and Step~3, we have
\begin{equation*}
  \limsup\limits_{\eta\downarrow0}\limsup\limits_{\e\downarrow0}f(\e,\eta)=0.
\end{equation*}
Hence, by Attouch's diagonalization argument there exists a diagonal sequence $\eta=\eta(\e)$ such that $f(\e,\eta(\e))\to 0$ for $\e\downarrow0$ and we deduce that $u_\e:=u_{\e,\eta(\e)}$ defines the sought after recovery sequence.
\end{proof}

\subsection{Compactness and embeddings in weighted spaces: Lemmas~\ref{L:compactness}, \ref{L:interpolation} and ~\ref{L:interpolation2}}\label{S:comp}
We first prove Lemma~\ref{L:interpolation} and Lemma~\ref{L:interpolation2}, and then Lemma~\ref{L:compactness}, which is a variant of Lemma~\ref{L:interpolation2}.

\begin{proof}[Proof of Lemma~\ref{L:interpolation}]
 Since $\beta\geq\frac1{p-1}$, we have $\frac{\beta}{\beta+1}p\geq1$. Hence, by the Gagliardo-Nirenberg-Sobolev inequality and $(1-\frac1\alpha)\frac1q\geq(1+\frac1\beta)\frac1p-\frac1d$ there exists $C<\infty$ such that for all cubes $Q\subset \R^d$ and $v\in W^{1,\infty}(Q,\R^n)$ we have
\begin{align}\label{loc:poinc}
 \left(\fint_Q |v-(\overline v)_Q|^{\frac{{\alpha}}{{\alpha}-1}q}\,dx\right)^{\frac{\alpha-1}{\alpha} \frac1q}\leq C|Q|^{\frac1d}\left(\fint_Q |\nabla v|^{\frac{{\beta}}{{ \beta}+1}p}\,dx\right)^{\frac{ \beta+1}{\beta} \frac1p},
\end{align}
where $(\overline v)_Q:=\fint_{Q}v(x)\,dx$ and the convention $\frac{\alpha-1}{\alpha}=1$ for $\alpha=\infty$ and $\frac{\beta+1}\beta=1$ for $\beta=\infty$. Fix $Q\in\mathcal Q_\e$ and $v\in \calA^\e$. By H\"older's inequality with exponent $\alpha\in(1,\infty)$, we have
\begin{align*}
 &\fint_Q(\sum_{\bb\in\calE_0}\lambda_\bb(\tfrac{x}{\e}))|v(x)-(\overline v)_Q|^q\,dx\leq (\#\calE_0)^{\frac{\alpha-1}{\alpha}}m_{\e,Q,\alpha}\left(\fint_{Q}|v(x)-(\overline v)_Q|^{\frac{\alpha}{\alpha-1}q}\,dx\right)^{\frac{\alpha-1}{\alpha}}.
\end{align*}
From \eqref{est:sumint}, \eqref{loc:poinc}, and a further application of H\"older's inequality with exponent $\beta+1\in(1,\infty)$, we deduce
\begin{align*}
 &\left(\fint_{Q}|v(x)-(\overline v)_Q|^{\frac{\alpha}{\alpha-1}q}\,dx\right)^{\frac{\alpha-1}{\alpha}\frac1q}\leq C|Q|^\frac1d\left(\fint_{Q}|\nabla v(x)|^{\frac{{\beta}}{{\beta}+1}p}\,dx\right)^{\frac{{\beta}+1}{{ \beta }}\frac{1}{p}}\\
 &\leq C|Q|^\frac1d\left(c_0\frac{\e^d}{|Q|}\!\!\sum_{z\in\e\Z^d\cap (Q)_{\e R}}\sum_{\bb\in\calNN_0}|\partial_\bb^\e v(z)|^{\frac{{\beta}}{{\beta}+1}p}\right)^{\frac{{\beta}+1}{{ \beta}}\frac1p}\\
&\leq C{c_0}^{\frac{\beta+1}{\beta}\frac1p}|Q|^\frac1d\left(\tilde m_{\e,Q,\beta}\right)^\frac1p\left(\frac{\e^d}{|Q|}\!\!\sum_{z\in\e\Z^d\cap (Q)_{\e R}}\sum_{\bb\in\calNN_0}\lambda_\bb(\tfrac{z}{\e})|\partial_\bb^\e v(z)|^p\right)^\frac1{p}.
\end{align*}
Inequality \eqref{ineq:interpolation} follows from the previous two estimates.\\
If $\alpha=\infty$ or $\beta=\infty$, we may put in the weights not by appealing to H\"older's inequality, but with help of the elementary pointwise estimates $|\cdot|^q\lambda_\bb\leq |\cdot|^q\sup_\Omega \lambda_\bb$ for all $\bb\in\calE_0$ and $|\cdot|^q\leq \sup_\Omega (\frac{1}{\lambda_\bb})\lambda_\bb|\cdot|^q$ for all $\bb\in\calNN_0$.
\end{proof}

\begin{proof}[Proof of Lemma~\ref{L:interpolation2}]
Fix $\omega\in\Omega_0$ and $A'\Subset A$.  We show that \eqref{ineq:interpolation} implies \eqref{claim:interpolation} by a two-scale argument similar to the one in the proof of Lemma~\ref{L:affine}: We introduce an additional length scale $\eta\in (0,1)$ (with $\e\ll \eta\ll \delta$), and cover $A'$ with cubes of side-length $\eta$. For $i\in\Z^d$ set $Q_i:=\eta(i+Y)$ and define
 $$I:=\{i\in\Z^d\,:\,Q_i\cap A'\}\neq\emptyset.$$
Moreover, we denote by $Q^\e_i$ the smallest cube in $\mathcal Q_\e:=\{\,[a,b)\,:\,a,b\in\e\Z^d\,\}$ which contains $Q_i$. Choosing $\eta$ sufficiently small we have $\cup_{i\in I}(Q_i^\e)_\e \Subset A$, and thus 
\begin{equation*}
 \# I\leq |A|\eta^{-d}.
\end{equation*}
We claim that for every $f\in L^1(\Omega)$ we have
\begin{align}\label{birk:variant}
 \forall i\in I\,:\,\limsup_{\e\downarrow0}\e^d\!\!\!\!\sum_{z\in\e\Z^d\cap (Q_i^\e)_{\e R}} f(\tau_{\frac{z}{\e}}\omega)\leq \eta^d\mathbb E[f].
\end{align}
Indeed, for given $\rho>0$, we have $(Q_i^\e)_{\e R}\subset (Q_i)_\rho$ for $\e>0$ sufficiently small and thus by Theorem~\ref{T:Birk} that
\begin{align*}
 \limsup_{\e\downarrow0}\e^d\!\!\!\!\sum_{z\in\e\Z^d\cap (Q_i^\e)_{\e R}} f(\tau_{\frac{z}{\e}}\omega)\leq \lim_{\e\downarrow0}\e^d\!\!\!\!\sum_{z\in\e\Z^d\cap (Q_i)_\rho} f(\tau_{\frac{z}{\e}}\omega)=(\eta+2\rho)^d\mathbb E[f].
\end{align*}
Since $\rho>0$ can be chosen arbitrarily small, we get \eqref{birk:variant}. Since $\#I<\infty$, we have $\limsup_{\e\downarrow0}\sup_{i\in I} c_{\e,i}\leq \sup_{i\in I}\limsup_{\e\downarrow0}c_{\e,i}$ for $c_{\e,i}:=\e^d\sum_{z\in\e\Z^d\cap (Q_i^\e)_{\e R}} f(\tau_{\frac{z}{\e}}\omega)$ and thus \eqref{birk:variant} yields
\begin{align}\label{birk:variant2}
  \limsup_{\e\downarrow0}\sup_{i\in I}\e^d\!\!\!\!\sum_{z\in\e\Z^d\cap (Q_i^\e)_{\e R}} f(\tau_{\frac{z}{\e}}\omega)\leq \eta^d\mathbb E[f].
\end{align}

Since $A'\subset\bigcup_{i\in I}Q_i^\e$, we have with $\overline u_{\e,i}:=\fint_{Q_i^\e} u_\e(x)\,dx$ the estimate
\begin{align*}
   &\left(\int_{A'}\Big(\sum_{\bb\in\calE_0}\lambda_\bb(\tfrac{x}{\e})\Big)|u_\e(x)|^q\,dx\right)^{\frac{1}{q}}
    \leq \left(\sum_{i\in I}|\overline u_{\e,i}|^q\int_{Q^\e_i}\Big(\sum_{\bb\in\calE_0}\lambda_\bb(\tfrac{x}{\e})\Big)\right)^\frac1q \\ &\qquad\qquad\qquad\qquad\qquad\qquad+\left(\sum_{i\in I}\int_{Q_i^\e}\Big(\sum_{\bb\in\calE_0}\lambda_\bb(\tfrac{x}{\e})\Big)|u_\e(x)-\overline u_{\e,i}|^q\,dx\right)^{\frac1q}  ,
\end{align*}
%

Since $u_\e\wto u$ weakly in $L^1$ we have $\overline u_{\e,i}\to \fint_{Q_i}u\,dx$. Moreover, \eqref{birk:variant} implies 
  $\int_{Q^\e_i}\Big(\sum_{\bb\in\calE_0}\lambda_\bb(\tfrac{x}{\e})\Big)\to \eta^d\sum_{\bb\in\calE_0}\mathbb E[\lambda_\bb]$. The combination of both yields
  \begin{equation*}
   \limsup\limits_{\e\downarrow 0} \sum_{i\in I}|\overline u_{\e,i}|^q\int_{Q^\e_i}\Big(\sum_{\bb\in\calE_0}\lambda_\bb(\tfrac{x}{\e})\Big)
    \leq  \sum_{\bb\in\calE_0}\mathbb E[\lambda_\bb]\,\int_A |u|^q\,dx.
  \end{equation*}
  We claim that
  \begin{equation}\label{ineq:00001}
     \limsup\limits_{\e\downarrow 0} \sum_{i\in I}\int_{Q_i^\e}\Big(\sum_{\bb\in\calE_0}\lambda_\bb(\tfrac{x}{\e})\Big)|u_\e(x)-\overline u_{\e,i}|^q\,dx\leq C\eta^{q+d\min\{ 1-\frac{q}{p},0\}},
  \end{equation}
  where here and below $C$ denotes a finite constant that might change from line to line but can be chosen independent of $\e$ and $\eta$. Note that the right-hand side vanishes as $\eta\downarrow 0$, since the exponent is always positive thanks to our assumption $\frac{1}{q}> \frac{1}{p}-\frac{1}{d}$. Hence, the proof of the lemma follows from the combination of \eqref{ineq:00001} and the previous estimates by taking the limit $\eta\downarrow 0$. We start our argument for \eqref{ineq:00001} with an application of Lemma~\ref{L:interpolation}, which combined with \eqref{birk:variant2} and the moment condition \eqref{ass:X0}, yields
  \begin{equation}\label{ineq:00002}
    \begin{aligned}
      & \limsup\limits_{\e\downarrow 0} \sum_{i\in I}\int_{Q_i^\e}\Big(\sum_{\bb\in\calE_0}\lambda_\bb(\tfrac{x}{\e})\Big)|u_\e(x)-\overline u_{\e,i}|^q\,dx\\
      &\leq C\eta^{q+d(1-\frac{q}{p})}\,\mathbb E[(\sum_{\bb\in\calE_0}\lambda_\bb)^\alpha]^{\frac{q}{\alpha}}\,\mathbb E[\sum_{\bb\in\calNN_0}(\frac{1}{\lambda_\bb})^\beta]^{\frac{q}{p\beta}}\\
      &\qquad\qquad\times  \limsup\limits_{\e\downarrow 0} \sum_{i\in I}\left(\e^d\!\!\!\!\sum_{z\in\e\Z^d\cap (Q_i^\e)_{\e
              R}}\sum_{\bb\in\calNN_0}\lambda_\bb(\tfrac{z}{\e})|\partial_\bb^\e
          u_\e(z)|^p\right)^{\frac{q}{p}}\\
      &\leq
      C\eta^{q+d(1-\frac{q}{p})}\limsup_{\e\downarrow0}\underbrace{\sum_{i\in
          I}\left(\e^d\!\!\!\!\sum_{z\in\e\Z^d\cap (Q_i^\e)_{\e
              R}}\sum_{\bb\in\calNN_0}\lambda_\bb(\tfrac{z}{\e})|\partial_\bb^\e
          u_\e(z)|^p\right)^{\frac{q}{p}}}_{=:(\star)},
    \end{aligned} \end{equation}
  with the convention that $\mathbb E[(\mu)^\gamma]^{\frac{1}{\gamma}}=\sup_\Omega \mu$ for $\gamma=\infty$.
  Note that
  \begin{eqnarray*}
    (\star)&\leq& (\# I)^{\max\{1-\frac{q}{p},0\}} \left(\e^d\sum_{i\in I}\sum_{z\in\e\Z^d\cap (Q_i^\e)_{\e R}}\sum_{\bb\in\calNN_0}\lambda_\bb(\tfrac{z}{\e})|\partial_\bb^\e
                  u_\e(z)|^p\right)^{\frac{q}{p}}\\
           &\leq& C\eta^{-d\max\{1-\frac{q}{p},0\}} \left(\e^d\sum_{z\in\e\Z^d\cap A}\sum_{\bb\in\calNN_0}\lambda_\bb(\tfrac{z}{\e})|\partial_\bb^\e
                  u_\e(z)|^p\right)^{\frac{q}{p}}.
  \end{eqnarray*}
  Indeed, the first estimate follows from H\"older's inequality if $q<p$ and from the discrete $\ell^{\frac{q}{p}}-\ell^1$ estimate if $q\geq p$. The second estimate holds due to the fact that (for sufficiently small $\e\ll 1$) any point $z\in \e\Z^d$ is contained in at most $2^d$ cubes $(Q^\e_i)_{\e R}$ with $z\in (Q^\e_i)_{\e R}$, and thus the double sum $\sum_{i\in I}\sum_{z\in\e\Z^d\cap (Q_i^\e)_{\e R}}$ on the right-hand side can be reduced to $\sum_{z\in\e \Z^d\cap A}$. Combined with \eqref{ineq:00002} and \eqref{ass:interpolation} the claimed inequality \eqref{ineq:00001} follows.
\end{proof}

\begin{proof}[Proof of Lemma~\ref{L:compactness}]
Since $(u_\e)$ has finite energy, cf.\ \eqref{ass:coer}, we obtain by Lemma~\ref{L:coercivity} that $u\in W^{1,p}(A,\R^n)$ and $u_\e\wto u$ weakly in $W^{1,\frac{\beta}{\beta+1}p}_{\loc}(A,\R^n)$. Hence, Rellich's Theorem yields strong convergence in $L^q_{\loc}(A,\R^n)$, provided the strict inequality $\frac{1}{q}>(1+\frac{1}{\beta})\frac1p-\frac1d$ holds. We argue that the conclusion remains valid in the critical case by appealing to Lemma~\ref{L:interpolation2}.  We first notice that \eqref{ass:on:q} still implies $u\in L^q(A,\R^n)$ by the Gagliardo-Nirenberg-Sobolev inequality. Hence, for any $\delta>0$ we find $v\in C^1(\R^d,\R^n)$ with
\begin{equation*}
  \int_A|u-v|^q\,dx\leq \delta.
\end{equation*}
We would like to apply Lemma~\ref{L:interpolation2} with $\alpha=\infty$. In order to do so we need to modify the weight functions: Set $\overline \lambda_\bb:=\min\{\lambda_\bb,1\}$ for $\bb\in\calNN_0$ and $\overline \lambda_\bb:=1+ \min\{\lambda_\bb,1\}$ for $\bb\in\calE_0\setminus\calNN_0$. Without loss of generality we might assume that $\calE_0\setminus \calNN_0\neq\emptyset$, otherwise we simply add an additional edge.
Note that:
\begin{itemize}
\item The weights $\overline \lambda_\bb$ satisfy the moment conditions \eqref{ass:X0} with $\overline \alpha=\infty$, $\overline \beta= \beta$ and we have $\sum_{\bb\in\calE_0}\mathbb E[\overline \lambda_\bb]\leq 2\#\calE_0$.
\item From $\bar\lambda_\bb\leq\lambda_\bb$ for all $\bb\in\calNN_0$ and \eqref{ass:coer} we deduce that 
  \begin{equation*}
    \limsup_{\e\downarrow0}\e^d\!\!\!\!\sum_{z\in\e\Z^d\cap A}\sum_{\bb\in\calNN_0}\overline \lambda_\bb(\tfrac{z}\e)|\partial_\bb^\e u_\e (z)-\partial_\bb^\e v(z)|^p<\infty.
  \end{equation*}
\item We have
  \begin{equation*}
    1\leq \sum_{\bb\in\calE_0\setminus\calNN_0}\overline\lambda_\bb\qquad\text{and}\qquad \sum_{\bb\in\calE_0}\mathbb E[\overline\lambda_\bb]\leq 2\#\calE_0.
  \end{equation*}
\end{itemize}
Hence, by Lemma~\ref{L:interpolation2} applied with exponents $p,\bar\alpha,\bar\beta$, we get
\begin{eqnarray*}
 \limsup_{\e\downarrow0}\int_{A'}|u_\e(x)-u(x)|^q\,dx&\leq& \limsup_{\e\downarrow0}\int_{A'}|u_\e(x)-v(x)|^q\,dx+\delta\\
 &\leq& \limsup_{\e\downarrow0}\int_{A'}|u_\e(x)-v(x)|^q (\sum_{\bb\in\calE_0}\overline \lambda_\bb(\tfrac{x}{\e}))\,dx + \delta\\
 &\stackrel{\eqref{claim:interpolation}}{\leq}&(2\#\calE_0) \int_A|u(x)-v(x)|^q\,dx+\delta\leq (2\#\calE_0+1)\delta.
\end{eqnarray*}
Since $\delta>0$ is arbitrary, the conclusion follows.
\end{proof}

\subsection{Gluing construction:  Lemmas~\ref{L:glue} and \ref{L:glue:convex}}

\begin{proof}[Proof of Lemma~\ref{L:glue}]
We adapt the classical averaging argument of De Giorgi \cite{DG75}, see also \cite[Proof of Lemma~2.1(b)]{Mueller87}, to the present discrete and degenerate setting. The sought after functions $v_\e$ differ from $u_\e$ only close to the boundary of $A$ in a thin layer of thickness proportional to $\delta$. In this layer $v_\e$ is defined (with help of suitable cut-off functions $\phi^k$) as a convex combination of $\overline u$ and $u_\e$, see Step~1 below. In the main part of the proof (see Step~2 and Step~3) we estimate the (asymptotic) energy increment $\limsup_{\e\downarrow 0}E_\e(v_\e,A)-\limsup_{\e\downarrow 0}E_\e(u_\e,A)$, e.g. by appealing to Lemma~\ref{L:interpolation2}. 
\smallskip

Without loss of generality we may assume that
\begin{equation*}
  E=\limsup_{\e\downarrow0} E_\e(u_{\e},A)<\infty.
\end{equation*}
Fix parameters $\delta>0$ and $m\in\N$. In the following  $\lesssim$ stands for $\leq$ up to a multiplicative constant which is independent of $\delta,\e,k,m$ and $\overline u$.

\step 1  Construction of the modified sequence.

For $k=0,\ldots,m$ we introduce the sets
\begin{eqnarray*}
    A^k&:=&\Big\{\,x\in A\,:\,\dist(x,\partial
    A)>\delta\tfrac{2m-k}{2m}\,\Big\},\\
    A_+^k&:=&\Big\{\,x\in A\,:\,\dist(x,\partial
    A)>\delta \tfrac{2m-k-\tfrac{1}{4}}{2m}\,\Big\},\\
    A^k_-&:=&\Big\{\,x\in A\,:\,\dist(x,\partial
    A)>\delta \tfrac{2m-k+\tfrac{1}{4}}{2m}\,\Big\}.
  \end{eqnarray*}
Note that $(A)_{-\delta }=A^0\Subset A_+^0\Subset A_-^1\Subset A^1\Subset A_+^1\Subset\cdots\Subset A_+^m\Subset (A)_{-\frac{\delta}{4}}$. Moreover, the minimal distance between the boundary of consecutive sets, say $A_-^1$ and $A^1$, is at least $\frac{\delta }{8m}$. For $k=0,\dots,m-1$, we consider scalar functions $\phi^k\in C_0^\infty(A)$ such that
$$0\leq \phi^k\leq1,\quad\phi^k=\begin{cases}1&\mbox{in $A_+^k$}\\0&\mbox{in $\R^d\setminus A_-^{k+1}$}                                                  
                                                      \end{cases},\quad \|\nabla \phi^k\|_{L^\infty(A)}\lesssim \frac{m}{\delta}.$$
We define $u_\e^k\in\calA^\e$ by 
$$u_\e^k(x):=\overline u(x)+\phi^k(x)(u_{\e}(x)-\overline u(x)),\quad\mbox{$x\in \e\calL$}.$$
Note that for all $\bb\in\calE_0$ and for all $\e>0$ sufficiently small compared to $\frac{\delta}m$ we have
\begin{equation}\label{uek:decomp}
   \partial_\bb^\e u_\e^k(z)=\begin{cases}
                              \partial_\bb^\e u_\e(z)&\mbox{for $z\in\e\Z^d\cap A^k$}\\
                              \partial_\bb^\e \overline u(z)&\mbox{for $z\in\e\Z^d\setminus A^{k+1}$}.
                             \end{cases}
\end{equation}
Furthermore, thanks to  $\|\nabla \phi^k\|_{L^\infty(A)}\lesssim \frac{m}{\delta}$ and the discrete product rule (recall that $\bb=[x_\bb,y_\bb]$):
$$\partial_\bb^\e(uv)(z)=v(z+\e x_\bb)\partial_\bb^\e u(z)+u(z+\e y_\bb)\partial_\bb^\e v(z),$$
we have
\begin{align}\label{uek:est}
  |\partial_\bb^\e u_{\e}^k(z)|\lesssim&|\partial_\bb^\e \overline u(z)|+\frac{m}{\delta}|(u_{\e}-\overline u)(z+\e x_\bb)|+|\partial_\bb^\e u_{\e}(z)|.
\end{align}
We claim that it suffices to prove
\begin{align}\label{L:glue:claim1}
 \limsup_{\e\downarrow0}\frac{1}{m}\sum_{k=0}^{m-1}\left(E_\e(u_\e^k,A)-E\right)\lesssim& \frac{E}{m}+|A\setminus A^0|(1+\|\nabla \overline u\|_{L^\infty((A)_1)}^p)\notag\\
 &+\frac{1}{m}\|\tfrac{m}{\delta}(u-\overline u)\|_{L^p(A)}^p.
\end{align}
 Indeed, from \eqref{L:glue:claim1} and $A^0=(A)_{-\delta}$ we conclude that $\limsup_{\e\downarrow0}\frac{1}{m}\sum_{k=0}^{m-1}E_\e(u_\e^k,A)$ is bounded by the right-hand side of \eqref{L:glue:st0}. Since $\frac{1}{m}\sum_{k=0}^{m-1}E_\e(u_\e^k,A)$ is an arithmetric mean of non-negative numbers, we find a sequence $(\hat k_\e)$ satisfying $\hat k_\e\in \{1,\dots,m-1\}$ and $E_\e(u_\e^{\hat k_\e},A)\leq \frac1m\sum_{k=0}^{m-1}E_\e(u_\e^k,A)$ for every $\e>0$. Thus, the claim follows by $u_\e^k=\overline u$ on $\R^d\setminus (A)_{-\frac{\delta}{4}}$ and $\|u_\e^k-\overline u\|_{L^q(A)}\leq \|u_\e-\overline u\|_{L^q(A)}$ for every $\e>0$, $q\in[1,\infty]$ and $k\in\{0,\dots,m-1\}$.


\step{2} Estimates on $A^k$ and $A\setminus A^{k+1}$.\\
The starting point for the proof of \eqref{L:glue:claim1} is a decomposition of an energy difference: For $S^k:=A^{k+1}\setminus A^k$ we have:
\begin{align*}
 &\sum_{k=0}^{m-1}\left(E_\e(u_\e^k,A)-E_\e(u_\e,A)\right)\\
 &\stackrel{\eqref{uek:decomp}}{=}\sum_{k=0}^{m-1}\left(E_\e(u_\e,A^k)+E_\e(u_\e^k,S^k)+E_\e(\overline u,A\setminus A^{k+1})-E_\e(u_\e,A)\right).
\end{align*}
Thanks to $V_\bb\geq0$ and $A^k\subset A$ we have $E_\e(u_\e,A^k)-E_\e(u_\e,A)\leq 0$. Combined with the definition of $E$ we get 
\begin{equation*}
 \limsup_{\e\downarrow0}\sum_{k=0}^{m-1}(E_\e(u_\e^k,A)-E)\leq \limsup_{\e\downarrow0}\sum_{k=0}^{m-1}\left(E_\e(u_\e^k,S^k)+E_\e(\overline u,A\setminus A^{k+1})\right).
\end{equation*}
Since $\overline u\in W_\loc^{1,\infty}(\R^d,\R^n)$, we have for $\e>0$ sufficiently small
\begin{equation}
\sup_{z\in\e\Z^d\cap A}\sup_{\bb\in\calE_0}|\partial_\bb^\e \overline u(z)|\leq \|\nabla \overline u\|_{L^\infty((A)_1)}.\label{eq:L:glue:est0001}
\end{equation}

Hence, we obtain by \eqref{ass:V:1}, $A^0\subset A^{k+1}$ and Theorem~\ref{T:Birk} 
\begin{align}\label{L:glue:est1}
\limsup\limits_{\e\downarrow0}E_\e(\overline u,A\setminus A^{k+1})\leq&\lim\limits_{\e\downarrow0}\e^d\!\!\!\!\!\!\sum_{z\in\e\Z^d\cap (A\setminus A^0)}\sum_{\bb\in\calE_0}{c_1}\Big(1+\lambda_\bb(\tfrac{z}{\e})(\|\nabla \overline u\|_{L^\infty((A)_1)}^p+1)\Big)\notag\\
=& |A\setminus A^0|\sum_{\bb\in\calE_0}{c_1}\Big(1+\mathbb E[\lambda_\bb](\|\nabla \overline u\|_{L^\infty((A)_1)}^p+1)\Big).
\end{align}

\step{3} Estimates on $S^k$. 

We claim that
\begin{equation}\label{L:glue:claim2}
  \limsup\limits_{\e\downarrow0}\sum_{k=0}^{m-1}E_\e(u_{\e}^k,S^k)\lesssim E+|A\setminus A^0|(1+\|\nabla \overline u\|_{L^\infty((A)_1)}^p) +\|\tfrac{m}{\delta}(u-\overline u)\|_{L^p(A)}^p. 
\end{equation}
By \eqref{ass:V:1} and \eqref{uek:est}, we have
\begin{align}\label{L:glue:est2}
 E_\e(u_{\e}^k,S^k)\lesssim&\e^d\!\!\!\!\sum_{z\in\e\Z^d\cap S^k}\sum_{\bb\in\calE_0}\bigg(1+\lambda_\bb(\tfrac{z}{\e})\big(1+|\partial_\bb^\e \overline u(z)|^p+|\partial_\bb^\e u_{\e}(z)|^p\notag\\
 &\qquad\qquad\qquad+\frac{m^p}{\delta^p}|(u_{\e}-\overline u)(z+\e x_\bb)|^p\big)\bigg).
\end{align}
Notice that
\begin{equation}\label{slices}
 S^i\cap S^j=\emptyset\mbox{ if $i\neq j$ and }\bigcup_{k=0}^{m-1}S^k\subset A^m\setminus A^0.
\end{equation}
Hence, similar calculations as for \eqref{L:glue:est1} yield
\begin{equation}\label{L:glue:est3}
 \limsup_{\e\downarrow0}\sum_{k=0}^{m-1}\e^d\!\!\!\!\sum_{z\in\e\Z^d\cap S^k}\sum_{\bb\in\calE_0}\left(1+\lambda_\bb(\tfrac{z}\e\right) \left(1+|\partial_\bb^\e \overline u(z)|^p)\right)
\lesssim |A\setminus A^0| (1+\|\nabla \overline u\|_{L^\infty((A)_1)}^p).
\end{equation}
Next, we estimate the term involving $|\partial_\bb^\e u_\e|^p$ in \eqref{L:glue:est2}. Using \eqref{ass:V:1}, \eqref{slices} and Theorem~\ref{T:Birk}, we obtain
\begin{equation}\label{L:glue:est5}
\begin{split}
&\limsup\limits_{\e\downarrow0}\sum_{k=0}^{m-1}\e^d\!\!\!\!\sum_{z\in\e\Z^d\cap S^k}\sum_{\bb\in\calE_0}\lambda_\bb(\tfrac{z}\e)|\partial_\bb^\e u_{\e}(z)|^p\\
&\qquad\lesssim\limsup\limits_{\e\downarrow0}\sum_{k=0}^{m-1}\e^d\!\!\!\!\sum_{z\in\e\Z^d\cap S^k}\sum_{\bb\in\calE_0}\left(V_\bb(\tfrac{z}\e;\partial_\bb^\e u_{\e}(z))+\lambda_\bb(\tfrac{z}\e)\right)\\
&\qquad\lesssim \limsup\limits_{\e\downarrow0}E_\e(u_{\e},A)+|A\setminus A^0|=E+|A\setminus A^0|.
\end{split}
\end{equation}
Finally, we consider the term involving $u_\e-\overline u$. We claim that
\begin{equation}\label{L:glue:claim3}
 \limsup_{\e\downarrow0}\sum_{k=0}^{m-1}\e^d\!\!\!\!\!\sum_{z\in\e\Z^d\cap S^k}\sum_{\bb\in\calE_0}\lambda_\bb(\tfrac{z}{\e})|(u_{\e}-\overline u)(z+\e x_\bb)|^p\lesssim \|u-\overline u\|_{L^p(A)}^p.
\end{equation}
Indeed, by the definition of $\calA^\e$ there exists a constant $c_4>0$ depending only on $\calT$ and $1\leq q<\infty$ (and not on $\e>0$) such that for all $u\in\calA^\e$ and $z\in\e\Z^d$ 
\begin{equation}\label{lpequiv}
      \e^d\max_{x\in z+\e  Y}|u(x)|^p\leq c_4\int_{z+\e Y}|u(x)|^p\,dx.
\end{equation}
Hence, from \eqref{lpequiv}, \eqref{slices}, $A^m\Subset (A)_{-\frac{\delta}4}$, $x_\bb\in Y$ (see~\eqref{def:E0}), and Lemma~\ref{L:interpolation2} (with $q=p$) we obtain 
\begin{align*}
 &\limsup_{\e\downarrow0}\sum_{k=0}^{m-1}\e^d\!\!\!\!\!\sum_{z\in\e\Z^d\cap S^k}\sum_{\bb\in\calE_0}\lambda_\bb(\tfrac{z}{\e})|(u_{\e}-\overline u)(z+\e x_\bb)|^p\\
 &\qquad\lesssim \limsup_{\e\downarrow0}\int_{(A)_{-\frac{\delta}{4}}}\bigg(\sum_{\bb\in\calE_0}\lambda_\bb(\tfrac{x}{\e})\bigg)|u_\e(x)-\overline u(x)|^p\,dx\\
 &\qquad\lesssim  \int_A |u(x)-\overline u(x)|^p\,dx.
\end{align*}
Combining \eqref{L:glue:est2}, \eqref{L:glue:est3}, \eqref{L:glue:est5} and \eqref{L:glue:claim3}, we obtain \eqref{L:glue:claim2}.
\end{proof}

\begin{proof}[Proof of Lemma~\ref{L:glue:convex}] 
  Without loss of generality we may assume that 
  \begin{equation*}
    E=\limsup_{\e\downarrow0} E_\e(u_{\e},A)<\infty.
  \end{equation*}
  Fix parameters $\delta>0$, $m,M\in\N$ and $s>0$. In the following we write $\lesssim$ if $\leq$ holds up to a multiplicative constant which is independent of $\delta,\e,m,M,\overline u,s$ and the additional index $k$ that will show up in the construction below.
  \smallskip

  The argument is a modification of the proof of Lemma~\ref{L:glue}. It additionally invokes a truncation argument that truncates peaks at level $s>0$ of the scalar function $\overline u-u_\e$: We define  $w_\e\in\calA^\e$ via
  \begin{equation*}
    w_\e(x)=\max\{\min\{s,u_\e(x)-\overline u(x)\},-s\},\quad x\in\e\calL.
  \end{equation*}
 In contrast to Lemma~\ref{L:glue} the estimate of $\limsup_{\e\downarrow 0}E_\e(v_\e,A)-\limsup_{\e\downarrow 0}E_\e(u_\e,A)$ is not based on Lemma~\ref{L:interpolation2}, but exploits the good interplay between the truncation and Assumption~\ref{A:2T} (B), see Step~3 below.
  \smallskip

  \step{1} Construction of the modified sequences.

  For $k=0,\dots,m$ let $A_-^k,A,A_+^k$ and $\phi^k$ be defined as in the proof of Lemma~\ref{L:glue}. Let $u_\e^k\in\calA^\e$ be defined by $u_\e^k(x)=\overline u(x)+\phi^k(x)w_\e(x)$ for $x\in\e\calL$, and set $O_M:=\sum_{\bb\in\calE_0}\mathbb E[\lambda_\bb\chi_{\{\lambda_\bb>M\}}]$. We claim that it suffices to show that
  \begin{align}\label{L:glue:convex:final}
    &\limsup\limits_{\e\downarrow0}\sum_{k=0}^{m-1}\left(E_\e(u_\e^k,A)-E\right)\lesssim E+ m|A\setminus A^0|(1+\|\nabla \overline{u}\|_{L^\infty((A)_1)}^p+(\tfrac{ms}{\delta })^p)\notag\\
    &\quad+m(\|\nabla \overline{u}\|_{L^\infty((A)_1)}^p+\|\nabla \overline{u}\|_{L^\infty((A)_1)}^q)(\tfrac{M}s\|u_\e-\overline u\|_{L^1(A)}+O_M|A|)\notag\\
    &\quad+m(E+|A|)^\frac{p}{q}\left(\tfrac{M}s\|u_\e-\overline u\|_{L^1(A)}+O_M|A|\right)^\frac{p-q}{q}.
  \end{align}
  Indeed, combining the argument below \eqref{L:glue:claim1} in the proof of Lemma~\ref{L:glue} and the definition of $h$ in the statement of Lemma~\ref{L:glue:convex}, we arrive at \eqref{L:glue:st0:convex}. 
  \smallskip

  As in the proof of Lemma~\ref{L:glue}, we prove \eqref{L:glue:convex:final} by splitting the energy into its contributions associated with $A\setminus A^k$ and $A^k$.

\step{2} Estimate on $A\setminus A^k$. 

We claim that
\begin{align}\label{L:glue:convex:aohneak}
\limsup\limits_{\e\downarrow0}\sum_{k=0}^{m-1}E_\e(u_\e^k,A\setminus A^k)\lesssim E+m|A\setminus A^0|(1+\|\nabla \overline u\|_{L^\infty((A)_1)}^p+(\tfrac{ms}{\delta })^p).
\end{align}
The argument is similar to the proof of Lemma~\ref{L:glue}. In particular, on the subdomain $A\setminus A^{k+1}$ the argument remains unchanged, since the sequences coincide on this set -- as can be seen by comparing \eqref{uek:decomp} with the identity
  \begin{equation*}
    \partial_\bb^\e u_\e^k(z)=\begin{cases}
      \partial_\bb^\e \overline u(z)+\partial_\bb^\e w_\e(z)&\mbox{for $z\in\e\Z^d\cap A^k$},\\
      \partial_\bb^\e \overline u(z)&\mbox{for $z\in\e\Z^d\setminus A^{k+1}$},
    \end{cases}
  \end{equation*}
  which holds for all $\e>0$ sufficiently small. On $S^k:=A^{k+1}\setminus A^k$ we replace \eqref{L:glue:est2} with the estimate
  \begin{equation}\label{L:glue:est2:scalar}
    E_\e(u_{\e}^k,S^k) \lesssim \e^d\!\!\!\!\sum_{z\in\e\Z^d\cap S^k}\sum_{\bb\in\calE_0}\left(1+\lambda_\bb(\tfrac{z}{\e})\big(1+|\partial_\bb^\e \overline u(z)|^p+|\partial_\bb^\e u_{\e}(z)|^p+\tfrac{m^ps^p}{\delta^p}\big)\right),
  \end{equation}
  the proof of which we postpone to the end of this step. Then, as in the proof of Lemma~\ref{L:glue} the asserted inequality \eqref{L:glue:convex:aohneak} follows from \eqref{L:glue:est1}, \eqref{L:glue:est3}, \eqref{L:glue:est5} and
  \begin{align*}
    \limsup\limits_{\e\downarrow0}\sum_{k=0}^{m-1}\e^d\!\!\!\!\!\sum_{z\in\e\Z^d\cap S^k}\sum_{\bb\in\calE_0}\frac{m^ps^p}{\delta^p}\lambda_\bb(\tfrac{z}{\e})\leq \left(\frac{ms}{\delta}\right)^p|A\setminus A^0|\sum_{\bb\in\calE_0}\mathbb{E}[\lambda_\bb],
  \end{align*}
  which follows from Birkhoff's ergodic theorem. Finally, we prove \eqref{L:glue:est2:scalar}. Note that by construction we have
  \begin{equation}\label{eq:nablatrunc}
    \forall z\in\e\Z^d\,\forall \bb\in\calE_0\,\exists t\in[0,1]\,:\,\partial_\bb^\e w_\e(z)=t(\partial_\bb^\e u_\e(z)-\partial_\bb^\e\overline u(z)).
  \end{equation}
  Combined with  $\|\nabla \phi^k\|_{L^\infty(\R^d)}\lesssim\frac{m}{\delta}$ we get
  \begin{equation*}
    |\partial_\bb^\e u_{\e}^k(z)|\lesssim|\partial_\bb^\e \overline u(z)|+\frac{m}{\delta}|w_\e(z+\e x_\bb)|+|\partial_\bb^\e w_\e(z)|
  \lesssim |\partial_\bb^\e \overline u(z)|+|\partial_\bb^\e u_{\e}(z)|+\frac{ms}{\delta},
  \end{equation*}
  which together with the growth condition \eqref{ass:V:1} yields \eqref{L:glue:est2:scalar}.

\step{3} Estimate on $A^k$.

We claim that
\begin{align*}
  &\limsup\limits_{\e\downarrow0}E_\e(u_\e^{k},A^k)-E\notag\\
  &\lesssim (1+\|\nabla \overline u\|_{L^\infty((A)_1)}^p+\|\nabla \overline u\|_{L^\infty((A)_1)}^q)(\tfrac{M}s\|u-\overline u\|_{L^1(A)}+O_M|A|)\notag\\
  &\quad+\left(E+|A|\right)^\frac{q}{p}\left(\tfrac{M}s\|u-\overline u\|_{L^1(A)}+O_M|A|\right)^{\frac{p-q}{p}}.
\end{align*}
For the proof define the indicator function
\begin{equation*}
 \chi_\e(z,\bb):=\begin{cases}
                  0&\mbox{if $\partial_\bb^\e w_\e(z)=\partial_\bb^\e(u_\e(z)-\overline u(z))$},\\
                  1&\mbox{if $\partial_\bb^\e w_\e(z)\neq\partial_\bb^\e(u_\e(z)-\overline u(z))$},
                 \end{cases}
\end{equation*}
and note that the claim follows from the following three estimates: 
\begin{subequations}
  \begin{align}
    \label{L:glue:scalar:est001}
    &E_\e(u^k_\e,A^k) - E_\e(u_\e,A^k)\\
    \notag &\qquad\quad\lesssim \e^d\!\!\!\!\!\sum_{z\in\e\Z^d\cap
      A^k}\sum_{\bb\in\calE_0}\chi_\e(z,\bb)\Big(1+\lambda_\bb(\tfrac{z}{\e})\big(1+|\partial_\bb^\e\overline u(z)|^p
    +|\partial_\bb^\e\overline u(z)|^q+|\partial_\bb^\e u_\e(z)|^q\big)\Big),\\
    \label{L:glue:scalar:est003}
    &\limsup_{\e\downarrow0}\e^d\!\!\!\!\!\sum_{z\in\e\Z^d\cap A^k}\sum_{\bb\in\calE_0}\chi_\e(z,\bb)\Big(1+\lambda_\bb(\tfrac{z}{\e})\big(|\partial_\bb^\e\overline u(z)|^p+|\partial_\bb^\e\overline u(z)|^q\big)\Big)\\
    \notag&\qquad\quad  \lesssim (1+\|\nabla \overline u\|_{L^\infty((A)_1)}^p+\|\nabla \overline u\|_{L^\infty((A)_1)}^q)(\tfrac{M}s\|u-\overline u\|_{L^1(A)}+O_M|A|),\\
    \label{L:glue:scalar:est004}
    &\limsup_{\e\downarrow0}\e^d\!\!\!\!\!\sum_{z\in\e\Z^d\cap
      A^k}\sum_{\bb\in\calE_0}\chi_\e(z,\bb)\lambda_\bb(\tfrac{z}{\e})|\partial_\bb^\e
    u_\e(z)|^q \\
    \notag&\qquad\quad\lesssim
    \left(E+|A|\right)^\frac{q}{p}\left(\tfrac{M}s\|u-\overline
      u\|_{L^1(A)}+|A|C_m\right)^\frac{p-q}{q}.
  \end{align}
\end{subequations}
{\it Argument for \eqref{L:glue:scalar:est001}.} 
Since $\partial_\bb^\e\phi^k=0$ on $A_k$ we have $\partial_\bb^\e u^k_\e=\partial_\bb^\e\overline u+\partial_\bb^\e
w_\e$ on $A_k$. Hence, thanks to the definition of $\chi_\e$ we have
\begin{equation*}
  \partial_\bb^\e u^k_\e= (1-\chi_\e) \partial_\bb^\e
  u_\e+\chi_\e \partial_\bb^\e u_\e^k
  \qquad\text{on }\e\Z^d\cap A_k\text{ for all }\bb\in\calE_0,
\end{equation*}
and thus
\begin{equation}\label{eq:glue:scalar:est001:1}
  \Big(V_\bb(\tfrac{z}\e;\partial_\bb^\e u_\e^k(z)) -V_\bb(\tfrac{z}\e;\partial_\bb^\e u_\e(z))\Big)=\chi_\e\Big(V_\bb(\tfrac{z}\e;\partial_\bb^\e u_\e^k(z)) - V_\bb(\tfrac{z}\e;\partial_\bb^\e u_\e(z))\Big).
\end{equation}
 Furthermore, \eqref{eq:nablatrunc} yields the representation 
\begin{eqnarray*}
  \partial_\bb^\e u_\e^k &=& \Big(t\partial_\bb^\e u_\e+(1-t)\partial_\bb^\e\overline u\Big),
\end{eqnarray*}
for some $t=t(z,\bb)\in[0,1]$. Hence, by convexity and non-negativity of $W_\bb:=V_\bb+f_\bb$  we obtain (for all $z\in\e\Z^d\cap A_k$):
\begin{eqnarray*}
  V_\bb(\tfrac{z}\e;\partial_\bb^\e u_\e^k(z)) &\leq&  W_\bb(\tfrac{z}\e;\partial_\bb^\e u_\e^k(z)) \leq \Big(\,t W_\bb(\tfrac{z}\e;\partial_\bb^\e u_\e(z))+(1-t)W_\bb(\tfrac{z}\e;\partial_\bb^\e\overline u(z))\Big)\\
  &\leq& \Big(\,V_\bb(\tfrac{z}\e;\partial_\bb^\e u_\e(z))+f_\bb(\tfrac{z}{\e};\partial_\bb^\e u_\e(z))+W_\bb(\tfrac{z}\e;\partial_\bb^\e\overline u(z)\Big).
\end{eqnarray*}
We combine this estimate with \eqref{eq:glue:scalar:est001:1} and appeal to \eqref{ass:V:1} and Assumption~\ref{A:2T}~(B). We get
\begin{eqnarray*}
  &&V_\bb(\tfrac{z}\e;\partial_\bb^\e u_\e^k(z)) -V_\bb(\tfrac{z}\e;\partial_\bb^\e u_\e(z))\\
  &\leq& \chi_\e\Big(\,f_\bb(\tfrac{z}{\e};\partial_\bb^\e u_\e(z))+W_\bb(\tfrac{z}\e;\partial_\bb^\e\overline u(z)\Big)\\
  &\lesssim&\chi_\e\Big(\,
  1+\lambda_\bb(\tfrac{z}{\e})\big(1+|\partial_\bb^\e\overline u(z)|^p+|\partial_\bb^\e\overline u(z)|^q+|\partial_\bb^\e u_\e(z)|^q\big)\Big),
\end{eqnarray*}
and \eqref{L:glue:scalar:est001} follows by summation.

{\it Argument for \eqref{L:glue:scalar:est003}.} By definition of $w_\e$ we have 
\begin{equation*}
  \chi_\e(z,\bb)\leq \frac{|(u_\e-\overline u)(z+\e x_\bb)|+|(u_\e-\overline u)(z+\e y_\bb)|}{s}.
\end{equation*}
For $\e>0$ sufficiently small, the endpoints of any edge $z+\bb$ with $z\in\e\Z^d\cap A^k$ and $\bb\in\calE_0$ are contained in $A$. Hence,
\begin{align}\label{L:glue:scalar:2}
  \e^d\!\!\!\!\!\sum_{z\in\e\Z^d\cap A^k}\sum_{\bb\in\calE_0}\chi_\e(z,\bb)\stackrel{\eqref{lpequiv}}{\lesssim}\tfrac{1}{s}\|u_\e-\overline u\|_{L^1(A)}.
\end{align}
We combine this estimate with another truncation argument: Let $\chi_{\bb,M}:\Omega\to\{0,1\}$ denote the indicator function of $\{\,\lambda_\bb\leq M\}$ and set $\chi_{\bb,M}(x):=\chi_{\bb,M}(\tau_{\lfloor x\rfloor}\omega)$. Then \eqref{L:glue:scalar:2} and Birkhoff's Theorem~\ref{T:Birk} applied to the random variable $\sum_{\bb\in\calE_0}\lambda_\bb\chi_{\bb,M}$ yield
\begin{align}\label{L:glue:scalar:1}
  &\limsup_{\e\downarrow0}\e^d\!\!\!\!\!\sum_{z\in\e\Z^d\cap A^k}\sum_{\bb\in\calE_0}\lambda_\bb(\tfrac{z}{\e})\chi_\e(z,\bb)\\
  \notag&\quad\lesssim \limsup_{\e\downarrow0}\left(\tfrac{M}s\|u_\e-\overline u\|_{L^1(A)}+\e^d\!\!\!\!\!\sum_{z\in\e\Z^d\cap A^k}\sum_{\bb\in\calE_0}\lambda_\bb(\tfrac{z}{\e})\chi_{\bb,M}(\tfrac{z}{\e})\right)\\
  \notag&\quad=\tfrac{M}s\|u-\overline u\|_{L^1(A)}+O_M|A|.
\end{align}
Now, \eqref{L:glue:scalar:est003} follows by combining the previous two estimates with the general bound \eqref{eq:L:glue:est0001}.

{\it Argument for \eqref{L:glue:scalar:est004}.} Since $1<q<p$, we have by H\"older's inequality:
\begin{align*}
 &\e^d\!\!\!\!\!\sum_{z\in\e\Z^d\cap A^k}\sum_{\bb\in\calE_0}\chi_\e(z,\bb)\lambda_\bb(\tfrac{z}{\e})|\partial_\bb^\e u_\e|^q\\
 &\leq \left(\e^d\!\!\!\!\sum_{z\in\e\Z^d\cap A^k}\sum_{\bb\in\calE_0}\lambda_\bb(\tfrac{z}{\e})|\partial_\bb^\e u_\e(z)|^p\right)^\frac{q}{p}\left(\e^d\!\!\!\!\!\sum_{z\in\e\Z^d\cap A^k}\sum_{\bb\in\calE_0} \lambda_\bb(\tfrac{z}{\e})\chi_\e(z,\bb)\right)^\frac{p-q}{p}\\
 &\stackrel{\eqref{ass:V:1}}{\lesssim} \left(E_\e(u_\e,A)+|A|\right)^\frac{q}{p}\left(\e^d\!\!\!\!\!\sum_{z\in\e\Z^d\cap A^k}\sum_{\bb\in\calE_0}\lambda_\bb(\tfrac{z}{\e})\chi_\e(z,\bb)\right)^\frac{p-q}{q}.
\end{align*}
Hence \eqref{L:glue:scalar:1} yields
\begin{align*}
  &\limsup_{\e\downarrow0}\e^d\!\!\!\!\!\sum_{z\in\e\Z^d\cap A^k}\sum_{\bb\in\calE_0}\lambda_\bb(\tfrac{z}{\e})|\partial_\bb^\e u_\e(z)|^q\chi_\e(z,\bb) \\
  &\qquad\qquad\lesssim \left(E+|A|\right)^\frac{q}{p}\left(\tfrac{M}s\|u-\overline u\|_{L^1(A)}+O_M|A|\right)^\frac{p-q}{q}.
\end{align*}
\end{proof}

\subsection{Continuity of $W_0$: Proposition~\ref{P:condwhom}}

\begin{proof}[Proof of Proposition~\ref{P:condwhom}]
  Let $F\in\R^{n\times d}$ and let $(F^N)\subset\R^{n\times d}$ denote an arbitrary sequence converging to $F$. We denote by $g$ and $g^N$ the linear functions defined by $g(x):=Fx$ and $g^N(x):=F^Nx$. We claim that
  \begin{equation}\label{w0cont}
    W_0(F)\leq \liminf_{N\uparrow\infty}W_0(F^N)\leq \limsup_{N\uparrow\infty}W_0(F^N)\leq W_0(F).
  \end{equation}
  Since $F$ and $(F^N)$ are arbitrary, this implies continuity of $W_0$. We split the proof of \eqref{w0cont} into three steps.  

  \step 1 Proof of the lower bound.

  We claim that $W_0(F)\leq \liminf_{N\uparrow\infty}W_0(F^N)$. For the proof fix $\omega\in\Omega_0\cap\Omega_F\cap\bigcap_{N\in\N}\Omega_{F^N}$, cf.~Lemma~\ref{L:indwhom} and Remark~\ref{R:Omega0}. For given $N\in\N$, we find by Lemma~\ref{L:affine} a sequence $(u_\e^N)$ such that 
  \begin{equation}\label{whomcontlb}
    \lim_{\e\downarrow0}\|u_\e^N-g^N\|_{L^{\frac{\beta}{\beta+1} p}(Y)}=0,\quad\lim_{\e\downarrow0}E_\e(\omega;u_\e^N,Y)=W_0(F^N).
  \end{equation}
  We compare $W_0(F^N)$ and $W_0(F)$ on the level of the energy functional $E^\e(\omega;\cdot,Y)$. For this purpose we represent $W_0(F)$ via \eqref{lim1e} and we need to fit the boundary values of $u^N_\e$ to the condition in the minimization problem \eqref{lim1e}. This is done  by appealing to the gluing constructions of Lemma~\ref{L:glue} and Lemma~\ref{L:glue:convex}. We first discuss the vectorial case: assume Assumption~\ref{A:2T}~(A). An application of Lemma~\ref{L:glue} to $u_\e=u^N_\e$, $u=g^N$ and $\overline u=g$ (with $N$ fixed), shows that there exists a constant  $C<\infty$ such that for $\delta>0$ sufficiently small and $m\in\N$ we have a sequence $v_\e\in\calA^\e_g(Y)$ such that
\begin{multline*}
  \limsup_{\e\downarrow0}E_\e(\omega;v_\e,Y)  -(1+\tfrac{C}m)\limsup_{\e\downarrow0}E_\e(\omega;u_\e^N,Y)\\
  \leq C|Y\setminus (Y)_{-\delta}|(1+|F|^p)+C\|\tfrac{m}{\delta}(g^N-g)\|_{L^{p}(Y)}^p.
\end{multline*}
Thanks to \eqref{lim1e} and \eqref{whomcontlb} this implies
\begin{align*}
  W_0(F)-(1+\tfrac{C}m)W_0(F^N)\leq\, C|Y\setminus (Y)_{-\delta}|(1+|F|^p)+C\|\tfrac{m}{\delta}(g^N-g)\|_{L^{p}(Y)}^p.
\end{align*}
Taking successively the limits superior $N\uparrow\infty$, $m\uparrow\infty$ and $\delta\downarrow0$, the asserted lower bound follows. In the scalar case, i.e.\ under Assumption~\ref{A:2T}~(B), we proceed similarly: From Lemma~\ref{L:glue:convex} and \eqref{lim1e} we deduce that there exists $C<\infty$ such that for $\delta>0$ sufficiently small, $m,M\in\N$ and $s>0$ it holds:
\begin{align*}
   &W_0(F)-(1+\tfrac{C}m)W_0(F^N)\\
   &\stackrel{\eqref{lim1e}}{=}\lim_{\e\downarrow0}\inf\left\{E_\e(\omega;\varphi,Y)\ |\ \varphi\in\calA^\e_g(Y)\right\}-(1+\tfrac{C}m)\lim_{\e\downarrow0}E_\e(\omega;u_\e^N,Y)\\
   &\leq C|Y\setminus (Y)_{-\delta}|(h(|F|)+(\tfrac{ms}{\delta})^p)+Ch(|F|)(\tfrac{M}s\|(g^N-g)\|_{L^{1}(Y)}+O_M)\\
   &\qquad+C(W_0(F^N)+1)^\frac{q}{p}(\tfrac{M}s\|(g^N-g)\|_{L^{1}(Y)}+O_M)^\frac{p-q}{p}.
\end{align*}
Taking successively the limits superior $N\uparrow\infty$, $s\downarrow0$, $m\uparrow\infty$, $M\uparrow\infty$ (recall $\lim\limits_{M\uparrow\infty}O_M=0$) and $\delta\downarrow0$, we obtain $W_\ho(F)\leq \liminf\limits_{N\uparrow\infty}W_\ho(F^N)$.

\step 2 Proof of the upper bound.

The upper bound estimate $\limsup_{N\uparrow\infty}W_0(F^N)\leq W_0(F)$ follows by interchanging the roles of $F$ and $F^N$ in the argument of Step~2: Indeed, by Lemma~\ref{L:affine}, we find a sequence $(u_\e)$ such that
\begin{equation*}
\lim_{\e\downarrow0}\|u_\e-g\|_{L^{\frac{\beta}{\beta+1} p}(Y)}=0,\quad\lim_{\e\downarrow0}E_\e(\omega;u_\e,Y)=W_0(F).
\end{equation*}
Using Lemma~\ref{L:glue} and \eqref{lim1e}, we find
\begin{align*}
   &W_0(F^N)-(1+\tfrac{C}m)W_0(F)\\
   &=\lim_{\e\downarrow0}\left\{E_\e(\omega;\varphi,Y)\ |\ \varphi\in\calA^\e_g(Y)\right\}-(1+\tfrac{C}m)\lim_{\e\downarrow0}E_\e(\omega;u_\e,Y)\\
   &\leq C|Y\setminus (Y)_{-\delta}|(1+|F^N|^p)+\|\tfrac{m}{\delta}(g^N-g)\|_{L^{p}(Y)}^p.
\end{align*}
Thus, taking successively the limit superior $N\uparrow\infty$, $m\uparrow\infty$ and $\delta\downarrow0$, we obtain the asserted upper bound estimate. Obviously, we can show the same inequality in the scalar case by appealing to Lemma~\ref{L:glue:convex}.
\end{proof}

\subsection{Lower bound: Proposition~\ref{P:inf}}

\begin{proof}[Proof of Proposition~\ref{P:inf}]
  Let $(u_\e)$ and $u\in W^{1,p}(A,\R^n)$ be such that $u_\e\wto u$ in $L^1(A,\R^n)$. We need to show  that
  \begin{equation}\label{L:inf:claim}
    \liminf_{\e\downarrow0}E_\e(u_\e,A)\geq \int_AW_0(\nabla u(x))\,dx.
  \end{equation}
  By passing to a subsequence, we may assume that 
  \begin{equation}\label{L:inf:sub}
    \liminf_{\e\downarrow0}E_\e(u_\e,A)=\lim_{\e\downarrow0}E_\e(u_\e,A)<\infty.
  \end{equation}
  From \eqref{L:inf:sub} and Lemma~\ref{L:coercivity}, we deduce that $u_\e\wto u$ in $W_\loc^{1,\frac{\beta}{\beta+1} p}(A,\R^n)$.

\step{1} Localization. 

We define a sequence of positive Radon measures $(\mu_\e)$ via 
\begin{equation*}
  \mu_\e:=\mu_\e(\omega;dz):=\e^d\sum_{z\in\e\Z^d}\sum_{\bb\in\calE_0}V_\bb(\tau_{\frac{z}\e}\omega;\partial_\bb^\e u_\e(z))\delta_{z},
\end{equation*}
and note that
\begin{equation}\label{inf:0002}
  \mu_\e (Q)=E_\e(\omega;u_\e,Q)\qquad\text{for any cube }Q\subset\R^d.
\end{equation}
\eqref{L:inf:sub} implies that $\limsup_{\e\downarrow0}\mu_\e(A)<\infty$ and thus there exists a positive Radon measure $\mu$ such that $\mu_\e\wto\mu$ in the sense of measures (up to a subsequence). In particular, it holds
\begin{equation}\label{measureweak}
  \begin{split}
  &\liminf\limits_{\e\downarrow0}\mu_\e (U)\geq \mu(U)\,\mbox{for all open sets $U\subset A$,}\\
 &\lim\limits_{\e\downarrow0}\mu_\e(B)=\mu(B)\,\mbox{for all bounded Borel sets $B\subset A$ with $\mu (\partial B)=0$,}
\end{split}
 \end{equation}
cf.~\cite[Section~1.9]{EG92}. By the Lebesgue decomposition theorem, we can decompose the measure $\mu$ with respect to the Lebesgue measure into its absolutely continuous part and its singular part, that is $\mu=\mu_a+\mu_s$ with $\mu_a\ll dx$ and $\mu_s\perp dx$. Moreover, $\mu_a$ and $\mu_s$ are positive Radon measures and there exists a non-negative $f\in L^1(A)$ with $\mu_a=f\,dx$ and
\begin{equation}\label{inf:f(x)}
 f(x)=\lim_{\rho\downarrow0}\frac{\mu_a(Q_\rho(x))}{\rho^d}=\lim_{\rho\downarrow0}\frac{\mu(Q_\rho(x))}{\rho^d}\quad\mbox{for a.e. }x\in A,
\end{equation}
with $Q_\rho(x):=x+\rho (-\tfrac12,\tfrac12)^d$.

For \eqref{L:inf:claim} it suffices to show that
\begin{equation}\label{fgeqWhom}
 f(x)\geq W_0(\nabla u(x))\quad\mbox{for a.e.\ $x\in A$}.
\end{equation}
Indeed, we have
\begin{equation*}
\liminf_{\e\downarrow0}\mu_\e(A)\stackrel{\eqref{measureweak}}{\geq} \mu(A)\geq\mu_a(A)=\int_Af(x)\,dx\geq \int_A W_0(\nabla u(x))\,dx.
\end{equation*}

\step{2} Approximate differentiability.

We claim that for a.e. $x_0\in A$ we can find a sequence $\rho_j\downarrow 0$ and a sequence of affine functions $(g_j)$ with  $\nabla g_j\equiv F_j\in\mathbb Q^{n\times d}$ and $g_j(x_0)=u(x_0)$ such that
\begin{align}
  \label{inf:diff2}
  &F_j\to \nabla u(x_0)\qquad\text{and}\qquad
  \lim_{j}\frac1\rho_j\left(\fint_{Q_{\rho_j}(x_0)}|u-g_j|^{p}\right)^\frac1{p}=0,\\
  \label{inf:limfxo}
  &f(x_0)=\lim_{j}\lim_{\e\downarrow0}\frac{\mu_\e(Q_{\rho_j}(x_0))}{\rho_j^d}.
\end{align}
This can be seen as follows: Since $u\in W^{1,p}(A,\R^n)$, by approximate differentiability (cf.~\cite[Section~6.1]{EG92})  there exists a set $S\subset A$ of measure zero such that for all $x_0\notin S$ we have
\begin{equation}\label{inf:diff}
 \lim_{\rho\downarrow0}\frac1\rho\left(\fint_{Q_\rho(x_0)}|u(y)-u(x_0)-\nabla u(x_0)(y-x_0)|^{p}\,dy\right)^\frac1{p}=0.
\end{equation}
Evidently, in the line above we might replace $\nabla u(x_0)$ by a suitable sequence $(F_\rho)\subset\mathbb Q^{n\times d}$ converging to $\nabla u(x_0)$. Hence, the affine function
\begin{equation*}
  g_\rho(y):=u(x_0)+F_\rho(y-x_0).
\end{equation*}
satisfies \eqref{inf:diff2} (for any sequence $\rho_j\downarrow 0$).

Next, we prove \eqref{inf:limfxo}. 
Without loss of generality we may assume that for $x\in A\setminus S$ we have \eqref{inf:f(x)}, \eqref{inf:diff} and $f(x)<\infty$. Fix $x_0\in A\setminus S$. Since $A$ is open there exists $\rho_0$ such that $Q_\rho(x_0)\Subset A$ for all $\rho\in(0,\rho_0)$. Moreover, $\mu(A)<\infty$ implies $\mu(\partial Q_\rho(x_0))=0$ for almost every $\rho\in(0,\rho_0)$. In particular, we can find a sequence $\rho_j\downarrow 0$ with $\mu(\partial Q_{\rho_j}(x_0))=0$. Now,  \eqref{inf:limfxo} follows from \eqref{measureweak} and \eqref{inf:f(x)}.

\step{3} Proof of \eqref{fgeqWhom} in the vectorial case.

Suppose Assumption~\ref{A:2T} (A) is satisfied and let $x_0$, $(\rho_j)$, $(g_j)$ and $(F_j)$ be as in Step~2. For convenience, set $Q_j:=Q_{\rho_j}(x_0)$. Since $(F_j)\subset\mathbb Q^{n\times d}$ and $\omega\in\Omega_1$, Corollary~\ref{C:rep_Whom} yields
\begin{equation}\label{inf:0001}
  W_0(F_j)=\lim_{\e\downarrow0}\left(\frac{1}{|Q_j|}\inf\{E_\e(\omega;\varphi,Q_j)\ :\ \varphi\in\calA^\e_{g_j}(Q_j)\,\}\right)\qquad\text{for all }j,
\end{equation}
We apply the gluing Lemma~\ref{L:glue} with $A=Q_j$, $\overline u=g_j$. Hence, for all $\delta=\hat\delta \rho_j$ with $0<\hat \delta \ll1$ and $m\in\N$ there exists a sequence $v_\e\in\calA^\e_{g_j}(Q_j)$ such that
\begin{align*}
  &\limsup_{\e\downarrow0}\frac{E_\e(\omega;v_\e,Q_j)}{|Q_j|}  -(1+\tfrac{C}m)\limsup_{\e\downarrow0}\frac{E_\e(\omega;u_\e,Q_j)}{|Q_j|}\\
  &\qquad \leq C\left(\frac{|Q_j\setminus (Q_j)_{-\hat\delta\rho_j}|}{|Q_j|}(1+|F_j|^p)+\frac{1}{|Q_j|}\|\tfrac{m}{\hat\delta\rho_j}(u-g_j)\|_{L^{p}(Q_j)}^p\right)\\
  &\qquad \leq C\left(2\hat\delta d(1+|F_j|^p)+(\tfrac{m}{\hat\delta})^p\left((\tfrac{1}{\rho_j})^p\fint_{Q_j}|u-g_j|^p\right)\right).
\end{align*}
Above, the constant $C$ is independent of $j$, $m$ and $\hat\delta$.
Hence, since $v_\e\in\calA^\e_{g_j}(Q_j)$ and thanks to \eqref{inf:0001}, \eqref{inf:0002} and \eqref{measureweak} we get
\begin{align*}
  W_0(F_j)-(1+\frac{C}{m})\frac{\mu (Q_j)}{|Q_j|}\leq C\left(2\hat\delta d(1+|F_j|^p)+(\tfrac{m}{\hat\delta})^p\left((\tfrac{1}{\rho_j})^p\fint_{Q_j}|u-g_j|^p\right)\right).
\end{align*}
We successively take the limits $j\to\infty$, $m\uparrow\infty$ and $\hat\delta\downarrow 0$ by appealing to the continuity of $W_0$, \eqref{inf:diff2}, \eqref{inf:limfxo}, \eqref{inf:f(x)} and $f(x_0)<\infty$. This yields
\begin{align*}
 W_0(\nabla u(x_0))-f(x_0)\leq 0.
\end{align*}
Since this is true for a.e. $x_0\in A$, the proof of \eqref{fgeqWhom} is complete.

\step{4} Proof of \eqref{fgeqWhom} in the scalar case.

Suppose that Assumption~\ref{A:2T}~(B) is satisfied. We proceed analogously to Step~3 with the only difference that instead of Lemma~\ref{L:glue} we apply Lemma~\ref{L:glue:convex}. The latter shows that there exists a constant $C$ such that for all $m,M\in\N$ and $s>0$ and $j$ we have
\begin{align*}
 W_0(F_j)-(1+\tfrac{C}{m})\frac{\mu(Q_j)}{|Q_j|} 
 \leq &C\Bigg(\hat \delta d\,\Big(h(|F_j|)+(\tfrac{ms}{\hat\delta \rho_j })^p\Big)
 +h(|F_j|)\big(\tfrac{M}s\fint_{Q_j}|u-g_j|+O_M\big)\\
 &\qquad+\big(\tfrac{\mu(Q_j)}{|Q_j|}+1\big)^\frac{q}{p}\big(\tfrac{M}s\fint_{Q_j}|u-g_j|+O_M\big)^{\frac{p-q}{p}}\,\Bigg).
\end{align*}
We substitute $s=\frac{\rho_j}{m^2}$. Now, the conclusion follows by taking successively the limits superior $j\to \infty$, $m\uparrow\infty$, $M\uparrow\infty$ and $\hat\delta\downarrow0$.
\end{proof}

\subsection{Recovery sequence: Proposition~\ref{P:sup}}

\begin{proof}[Proof of Proposition~\ref{P:sup}] Throughout the proof, we fix $\omega\in\Omega_1$.

\step{1} Recovery sequence with prescribed boundary values for affine functions.

Let $g$ be an affine function and set $F:=\nabla g$. Let $A\subset\R^d$ be a bounded Lipschitz domain. We claim that there exists a sequence $(u_\e)$ with
\begin{equation}\label{eq:sup:0001}
  u_\e\in\calA^\e_g(A),\qquad\lim_{\e\downarrow 0}E_\e(u_\e,A)=|A|W_0(F)\qquad\text{and}\qquad \|u_\e-u\|_{L^{\frac{\beta}{\beta+1}p}(A)}\to 0.
\end{equation}
We first show that we can find a sequence $v_\e\in\calA^\e$ that satisfies \eqref{eq:sup:0001} (but not necessarily coincides with $g$ on the boundary). To that end let $(F_j)\subset\Q^{n\times d}$ be a sequence converging to $F$ and set $g_j(x):=g(0)+F_jx$. By the definition of $\Omega_1$, cf. Remark~\ref{R:Omega0}, and Lemma~\ref{L:affine} there exists for every $j\in\N$ a sequence $u_{\e,j}$ such that $u_{\e,j}\to g_j$ in $L^{\frac{\beta}{\beta+1} p}(A,\R^n)$ and $\lim_{\e\downarrow0}E_\e(u_{\e,j},A)=|A|W_0(F_j)$. Using the continuity of $W_0$ and the uniform convergence of $g_j\to g$, we obtain
\begin{align*}
  \limsup_{j\uparrow\infty}\lim_{\e\downarrow0}\left(|E_\e(u_{\e,j},A)-|A|W_0(F)|+\|u_{\e,j}-g\|_{L^{\frac{\beta}{\beta+1} p}(A)}\right)=0.
\end{align*}
Hence, we obtain the sought after sequence $v_\e$ by passing to a diagonal sequence. 

Next, we use the gluing construction of Lemma~\ref{L:glue} and Lemma~\ref{L:glue:convex} to fit the boundary values. We only discuss the vectorial case, i.e.\ we suppose that Assumption~\ref{A:2T} (A) is satisfied. (The scalar case is similar and left to the reader). Lemma~\ref{L:glue} applied with $u_\e=v_\e$, $\overline u=g$ shows that for any $\delta>0$ sufficiently small, $m\in\N$, there exists $v_\e^{\delta,m}$ with $v_\e^{\delta,m}=g$ in $\R^d\setminus (A)_{-\frac{\delta}4}$ such that the quantity
\begin{equation*}
 f(\e,m,\delta):=\max\left\{0,\left(E_\e(v_\e^{\delta,m},A)-|A|W_0(F)\right)\right\}+\|v_\e^{\delta,m}-g\|_{L^{\frac{\beta}{\beta+1} p}(A)}+\theta_{\delta,\e},
\end{equation*}
with 
\begin{equation*}
  \theta_{\delta,\e}=\begin{cases}
    0&\mbox{if $(A)_{-\frac{\delta}{4}}\subset (A)_{-\e R}$,}\\
    1&\mbox{otherwise.}
  \end{cases} 
\end{equation*}
satisfies
\begin{equation*}
  \limsup_{\delta\downarrow0}\limsup_{m\uparrow\infty}\limsup_{\e\downarrow0}f(\e,m,\delta)\leq 0.
\end{equation*}
Thus there exists a diagonal sequence such that $f(\e,m(\e),\delta(\e))\to0$ as $\e\downarrow0$. We conclude that $u_\e:=v_\e^{\delta(\e),m(\e)}$ satisfies 
\begin{equation*}
  u_\e\in\calA^\e_g(A),\qquad\limsup_{\e\downarrow 0}E_\e(u_\e,A)\leq |A|W_0(F)\qquad\text{and}\qquad \|u_\e-u\|_{L^{\frac{\beta}{\beta+1}p}(A)}\to 0.
\end{equation*}
Combined with the lower bound Proposition~\ref{P:inf}, the claim follows.

\step{2} The general case.

It is sufficient to show the $\limsup$ inequality for piecewise affine functions of the form: $u\in W^{1,\infty}(\R^d,\R^n)$
\begin{equation*}
    u=\sum_{j=1}^M\chi_{A^j}g^j + \chi_{\R^d\setminus A}u,
\end{equation*}
where $A^1,\ldots,A^M$  partition $A$ into disjoint Lipschitz sets and the $g^j$'s are affine functions. The general case follows from the density of piecewise affine functions in $W^{1,p}(A,\R^n)$, the lower semicontinuity of the $\Gamma$-$\limsup$ and the continuity of $W_0$ and \eqref{cond:Whom1}.
\smallskip

To treat the small non-locality of our discrete energy, we introduce an intermediate length scale $0<\rho\ll1$ (which finally will be sent to zero). In the following we assume that $\e>0$ is sufficiently small (relative to $\rho$) such that we have
\begin{equation}\label{limsup:0002}
  j\neq k\quad\Rightarrow\quad ((A^j)_{-\rho})_{\e R}\cap   ((A^k)_{-\rho})_{\e R} =\emptyset.
\end{equation}
For $\e>0$ let $v_\e$ denote the unique (piecewise affine) function in $\calA^\e$ that satisfies $v_\e=u$ on $\e\calL$. Furthermore, we denote by $u_{\e,\rho}^j\in\calA^\e_{g_j}((A_j)_{-\rho})$ the recovery sequence for $g^j$ on the set $(A^j)_{-\rho}$ obtained via Step~1, i.e.\ we have
\begin{equation}\label{limsup:pwaff0}
  \lim_{\e\downarrow0}E_\e(u_{\e,\rho}^{i},(A^i)_{-\rho})=\int_{(A^j)_{-\rho}}W_0(\nabla g^j(x))\,dx\quad\text{and}\quad \lim\limits_{\e\downarrow0}\|u_{\e,\rho}^j-g^j\|_{L^{\frac{\beta}{\beta+1}p}(A)}=0.
\end{equation}
We define $u_{\e,\rho}\in\calA^\e$ via
\begin{equation*}
  u_{\e,\rho}=v_\e +\sum_{j=1}^M\left(u_{\e,\rho}^{j}- g^j\right)\qquad\text{on }\e\calL,
\end{equation*}
and note that $\lim\limits_{\e\to 0}\|u_{\e,\rho}-u\|_{L^{\frac{\beta}{\beta+1}p}(A)}=0$. Furthermore, it can be checked that for $0<\e\ll \rho$ we have (cf. \eqref{limsup:0002}):
\begin{enumerate}[(i)]
\item $v_\e=g_j$ in $((A^j)_{-\rho})_{\e R}$,
\item $u_{\e,\rho}=u_{\e,\rho}^{j}$ in $((A^j)_{-\rho})_{\e R}$,
\item $u_{\e,\rho}=v_\e$ in $(A^j\setminus (A^j)_{-\rho})_{\e R}$.
\end{enumerate}
Thus, as in Remark~\ref{Remark:finiterange} we get the decomposition
\begin{equation*}
E_\e(u_{\e,\rho},A)=\sum_{i=1}^M\bigg(E_\e(u_{\e,\rho}^{i},(A^i)_{-\rho})+E_\e(v_\e,A^i\setminus (A^i)_{-\rho})\bigg).
\end{equation*}
By combining \eqref{limsup:pwaff0} with the estimate
\begin{equation}\label{limsup:pwaff2}
 \limsup_{\rho\downarrow0}\limsup_{\e\downarrow0}E_\e(v_\e,A^i\setminus (A^i)_{-\rho})=0\quad\mbox{for $i=1,\dots,M$},
\end{equation}
(whose proof we postpone to the end of this step), we deduce that
\begin{equation*}
  \limsup_{\rho\downarrow0}\limsup_{\e\downarrow0}\left(\Big|E_\e(u_{\e,\rho},A)-\int_A W_0(\nabla u(x))\,dx\Big|+\|u_{\e,\rho}-u\|_{L^{\frac{\beta}{\beta+1}p}(A)}\right)\leq 0.
\end{equation*}
Hence, the claim follows by passing to  a suitable diagonal sequence.

It remains to prove \eqref{limsup:pwaff2}: For $\e>0$ sufficiently small, we have
$$\sup_{z\in\e\Z^d\cap A}\sup_{\bb\in\calE_0}|\partial_\bb^\e v_\e(z)|\leq \|\nabla v_\e\|_{L^\infty((A)_1)}\leq \|\nabla u\|_{L^\infty((A)_1)}.$$ 
Using \eqref{ass:V:1} and the Ergodic Theorem~\ref{T:Birk}, we obtain
\begin{align*}
 &\limsup_{\e\downarrow0}E_\e(v_\e,A^i\setminus (A^i)_{-\rho})\\
 &\leq \limsup_{\e\downarrow0}\e^d\!\!\!\!\!\!\!\!\sum_{z\in\e\Z^d\cap (A^i\setminus (A^i)_{-\rho})}\sum_{\bb\in\calE_0}{c_1} \left(1+\lambda_\bb(\tfrac{z}{\e})(|\partial_\bb^\e v_\e(z)|^p+1)\right)\\
 &=  |A^i\setminus (A^i)_{-\rho}|\sum_{\bb\in\calE_0}{c_1}\left(1+\mathbb E[\lambda_\bb](\|\nabla u\|_{L^\infty((A)_1)}^p+1)\right).
 \end{align*}
Taking the limit $\rho\downarrow 0$ yields \eqref{limsup:pwaff2}.
\end{proof}

\subsection{Asymptotic formula: Lemma~\ref{L:Whomper}}\label{S:Whomper}
\begin{proof}[Proof of Lemma~\ref{L:Whomper}]
Fix $F\in\R^{n\times d}$. It suffices to prove
\begin{equation}\label{whomper:lim}
 \lim_{k\uparrow\infty}W_\ho^{(k)}(\omega;F)=W_0(F)\qquad\mbox{for all $\omega\in\Omega_F\cap\Omega_0$}.
\end{equation}
Indeed, since any $\phi\in\calA_0^1(kY)$ can be extended to a periodic function in $\calA_\#(kY)$, we have
\begin{equation}\label{whomper:est1}
 W_\ho^{(k)}(\omega;F)\leq \frac{1}{k^d}m_F(\omega;kY).
\end{equation}
The right-hand side converges almost everywhere and in $L^1(\Omega)$ thanks to the subadditive ergodic theorem, cf.~Lemma~\ref{L:indwhom}. Hence, by dominated convergence \eqref{whomper:est1} yields
$$\lim_{k\uparrow\infty}\mathbb E[W_\ho^{(k)}(\cdot;F)-W_0(F)]=0$$
To see \eqref{whomper:lim} note that \eqref{whomper:est1} implies
$$W_0(F)=\lim_{k\uparrow\infty}\frac1{k^d}m_F(\omega;kY)\geq\limsup_{k\uparrow\infty} W_\ho^{(k)}(\omega;F).$$
It remains to show
\begin{equation*}
 \liminf_{k\uparrow\infty}W_\ho^{(k)}(\omega;F)\geq W_0(F)\quad\mbox{for all $\omega\in\Omega_F\cap\Omega_0$}.
\end{equation*}
To that end, we fix $\omega\in\Omega_F\cap\Omega_0$ and choose $\phi_k\in\calA_\#(kY)$ with $\fint_{kY}\phi_k\,dx=0$ such that
\begin{equation}\label{def:phik}
 W_\ho^{(k)}(\omega;F)=\frac1{k^d}\sum_{z\in\Z^d\cap kY}\sum_{\bb\in\calE_0}V_\bb(z;\partial_\bb(g_F+\phi_k)(z)),
\end{equation}
where $g_F$ denotes the linear function $g(x):=Fx$. Note that $\phi_k$ exists, since $\calA_\#(k Y)$ is a finite dimensional space. We claim that
\begin{equation}\label{whomper:compactness}
 \limsup_{k\uparrow\infty}\fint_{kY}|\nabla \phi_k|^{\frac{\beta}{\beta+1}p}\,dx<\infty.
\end{equation}
In the following, we write $\lesssim$ if $\leq$ holds up to a multiplicative constant which is independent of $k$. The $kY$-periodicity of $\phi_k$ combined with similar calculations as in Step~2 of the proof of Lemma~\ref{L:affine} yield for $k\gg R$
\begin{align*}
 &\frac1{k^d}\sum_{z\in\Z^d\cap (kY)_R}\sum_{\bb\in\calNN_0}|\partial_\bb^1 \phi_k(z)|^{\frac{\beta}{\beta+1}p}\lesssim\frac1{k^d}\sum_{z\in\Z^d\cap kY}\sum_{\bb\in\calNN_0}|\partial_\bb^1 \phi_k(z)|^{\frac{\beta}{\beta+1}p}\\
 &\lesssim |F|^{\frac{\beta}{\beta+1}p}+ \frac1{k^d}\sum_{z\in\Z^d\cap kY}\sum_{\bb\in\calNN_0}|\partial_\bb^1 (g_F+\phi_k)(z)|^{\frac{\beta}{\beta+1}p}\\
 &\lesssim|F|^{\frac{\beta }{\beta+1}p}\\
 &\quad+ \left(\frac1{k^d}\!\!\!\sum_{z\in\Z^d\cap kY}\sum_{\bb\in\calNN_0}\!\!\!\!\lambda_\bb(z)^{-\beta}\right)^{\frac{1}{\beta+1}}\!\!\!\left(\frac1{k^d}\!\!\!\!\sum_{z\in\Z^d\cap kY}\sum_{\bb\in\calNN_0}\lambda_\bb(z)|\partial_\bb^1 (g_F+\phi_k)(z)|^p\right)^{\frac{\beta}{\beta+1}}\!\!\!.
\end{align*}
The above inequality, \eqref{def:phik}, \eqref{ass:V:1}  and Theorem~\ref{T:Birk} yield
\begin{align*}
 \limsup_{k\uparrow\infty}\frac1{k^d}\sum_{z\in\Z^d\cap (kY)_R}\sum_{\bb\in\calNN_0}|\partial_\bb \phi_k(z)|^{\frac{\beta}{\beta+1}p}<\infty,
\end{align*}
and thus \eqref{whomper:compactness} by \eqref{est:sumint}.\\
Set $\varphi_k(\cdot)=k^{-1}\phi_k(k\cdot)$, so that
$$W_\ho^{(k)}(\omega;F)=E_\frac1k(\omega;g_F+\varphi_k,Y).$$
The definition of $\phi_k$ implies $\fint_Y\varphi_k\,dx=0$ and $\varphi_k\in W_{\#}^{1,p}(Y,\R^n)$ for all $k\in\N$, where $W_{\#}^{1,p}(Y,\R^n):=\{u\in W_\loc^{1,p}(\R^d,\R^n):\mbox{ $u$ is $Y$-periodic}\}$. Hence, Poincar\'e's inequality and \eqref{whomper:compactness} yield: 
$$\limsup_{k\uparrow\infty}\|\varphi_k\|_{W^{1,\frac{\beta}{\beta+1}p}(Y)}<\infty.$$
Hence, there exists $\varphi\in W_{\#}^{1,p}(Y,\R^n)$ with $\fint_Y \varphi\,dx=0$ such that, up to a subsequence, $\varphi_k\wto \varphi$ in $W^{1,\frac{\beta}{\beta+1}p}(Y,\R^n)$ and thus Proposition~\ref{P:inf} yields 
\begin{equation*}
\liminf_{k\uparrow\infty} W_\ho^{(k)}(\omega;F)=\liminf_{k\uparrow\infty} E_{\frac1k}(\omega;g_F+\varphi_k,Y)\geq \int_Y W_0(F+\nabla \varphi(x))\,dx.
\end{equation*}
The quasiconvexity of $W_0$, the growth condition \eqref{cond:Whom1} and the periodicity of $\varphi$ imply (see \cite[Proposition 5.13]{Dac07})
$$\int_Y W_0(F+\nabla \varphi(x))\,dx\geq W_0(F).$$
Altogether, we have shown  \eqref{whomper:lim}.
\end{proof}

\subsection{Moments under independence: Proposition~\ref{Prop:iid}}

\begin{proof}[Proof of Proposition~\ref{Prop:iid}]

The proof follows very closely the arguments of the proof of \cite[Proposition 3.7]{MO16} where the case $p=2$ is considered, see also \cite[Theorem 1.12]{ADS16}. In the following we write $\ee\in\e\mathbb B^d\cap A$ if $[x,y]=\ee\in \e \mathbb B^d$ and $x,y\in A$.

For every given edge $\ee=[z,z+e_i]\in\mathbb B^d$, it is easy to see that there exist $2d$ disjoint paths $\ell_1(\ee),\dots,\ell_{2d}(\ee)$ in $\mathbb B^d\cap B_4(z)$ connecting $z$ and $z+e_i$ and the length of each path does not exceed $9$. We define random variables $\{\mu(\omega;\ee)\}_{\ee\in\mathbb B^d}$ by
\begin{equation*}
 \mu(\omega;\ee)^{-\frac{p}{p-1}}:=\inf_{1\leq i\leq 2d}\sum_{\bb\in \ell_i(\ee)}\omega(\bb)^{-\frac{1}{p-1}}.
\end{equation*}
By definition, we have $\mu(\omega;\ee)=\mu(\tau_{z_\ee}\omega;\bb_\ee)$, where $z_\ee \in\Z^d$ and $\bb_\ee\in\{e_1,\dots,e_d\}$ are uniquely given by $\ee=z_\ee+\bb_\ee$.
We show that:

\begin{enumerate}
 \item[(i)]  There exists a constant $C=C(d)<\infty$ such that for all $v\in\calA^\e$:
\begin{align}\label{ineq:iid:1}
 &\left(\e^d\!\!\!\!\sum_{z\in\e\Z^d\cap A}\sum_{i=1}^d|\partial_{e_i}^\e v(z)|^{\frac{\beta}{\beta+1}p}\right)^{\frac{\beta+1}{\beta}}\notag\\
 &\leq C\left(\e^d\!\!\!\sum_{z\in\e\Z^d\cap (A)_\e}\sum_{i=1}^d\mu(\tau_{\frac{z}\e}\omega;e_i)^{-\beta p}\right)^\frac1\beta\left(\e^d\sum_{z\in\e\Z^d\cap (A)_{4\e}}\sum_{i=1}^d(\tau_{\frac{z}\e}\omega)(e_i)|\partial_{e_i}^\e v(z)|^p\right).
\end{align}
\item[(ii)] It holds $\mathbb E[\mu(\cdot;\ee)^{-\beta p}]<\infty$ for all $\ee\in\mathbb B^d$.
\end{enumerate}
Obviously, inequality \eqref{ineq:iid:claim} follows from (i) and (ii) with $f(\omega)=C\sum_{i=1}^d\mu(\omega;e_i)^{-\beta p}$.\\

\step{1} Proof of \eqref{ineq:iid:1}.

Fix $\ee=[z,z+e_i]\in\mathbb B^d$. Denote by $\ell(\ee)$ the minimizing path in the definition of $\mu(\omega;\ee)$. The triangular and H\"older inequality yield
\begin{align*}
 |\nabla v(\ee)|\leq& \sum_{\bb\in\ell(\ee)}|\nabla v(\bb)|\leq\left(\sum_{\bb\in\ell(\ee)}{\omega(\bb)}^{-\frac{1}{p-1}}\right)^{\frac{p}{p-1}}\left(\sum_{\bb\in\ell(\ee)}\omega(\bb)|\nabla v(\bb)|^p\right)^\frac1p\\
 =&\mu(\omega;\ee)^{-1}\left(\sum_{\bb\in\ell(\ee)}\omega(\bb)|\nabla v(\bb)|^p\right)^\frac1p.
\end{align*}
Multiply both sides by $\mu(\omega;\ee)$ and take the $p$th power. This shows that there exists $C=C(d)<\infty$ such that for all $v\in\calA^\e$:
\begin{align*}
 \sum_{\ee\in\e\mathbb B^d\cap A}\mu(\omega;\tfrac{\ee}\e)^p|\nabla v(\ee)|^p\leq&\sum_{\ee\in\e\mathbb B^d\cap A}\sum_{\bb\in \ell(\tfrac{\ee}\e)}\omega(\bb)|\nabla v(\e\bb)|^p\leq C\!\!\!\sum_{\ee\in\e\mathbb B^d\cap (A)_{4\e}}\omega(\tfrac{\ee}\e)|\nabla v(\ee)|^p,
\end{align*}
and thus (by H\"older's inequality)
\begin{align*}
 &\left(\e^d\!\!\!\!\sum_{z\in\e\Z^d\cap A}\sum_{i=1}^d|\partial_{e_i}^\e v(z)|^{\frac{\beta}{\beta+1}p}\right)^{\frac{\beta+1}{\beta}}\leq\left(\e^d\!\!\!\!\sum_{\ee\in\e\mathbb B^d\cap (A)_{\e}}|\nabla v(\ee)|^{\frac{{\beta}}{{\beta}+1}p}\right)^\frac{{\beta}+1}{{\beta} }\\
 &\leq C\left(\e^d\!\!\!\sum_{\ee\in\e\mathbb B^d\cap (A)_\e}\mu(\omega;\tfrac{\ee}\e)^{-\beta p}\right)^\frac1\beta\left(\e^d\!\!\!\!\sum_{\ee\in\e\mathbb B^d\cap (A)_{4\e}}\omega(\tfrac{\ee}\e)|\nabla v(\ee)|^p\right)
\end{align*}
which implies \eqref{ineq:iid:1}.

\step{2} Proof of $\mathbb E[\mu(\cdot;\ee)^{-\beta p}]<\infty$ for all $\ee\in\mathbb B^d$.

It suffices to show that 
\begin{equation}\label{ineq:iid:moment}
 \mathbb P[\mu(\cdot;\ee)^{-1}>t]\lesssim t^{-2d\gamma p},
\end{equation}
where here and for the rest of the proof $\lesssim$ means $\leq$ up to a multiplicative constant which depends on $\beta,\gamma,d$ and $p$. Indeed, since $\beta p>0$ and $2d\gamma>\beta$, we have
\begin{align*}
\mathbb E[\mu(\cdot;\ee)^{-\beta p}]=&\frac1{\beta p}\int_0^\infty t^{\beta p-1}\mathbb P[\mu(\ee)^{-1}>t]\,dt
\lesssim1+\int_1^\infty  t^{(\beta-2d\gamma) p-1}\,dt
\lesssim1, 
\end{align*}
since $(\beta-2d\gamma)p<0$. Finally, we prove \eqref{ineq:iid:moment}: By independence of $\{\omega(\ee)\}_{\ee\in\mathbb B^d}$ and disjointness of the paths, we have for $t>0$ that
\begin{equation*}
 \mathbb P[\mu(\cdot;e)^{-1}>t]=\mathbb P\left[\mu(\cdot;e)^{-\frac{p}{p-1}}>t^\frac{p}{p-1}\right]=\prod_{i=1}^{2d}\mathbb P\left[\sum_{\bb\in\ell_i(\ee)}\omega(\bb)^{-\frac1{p-1}}>t^\frac{p}{p-1}\right].
\end{equation*}
Since the length of $\ell_i(\ee)$ is bounded by $9$ and due to stationarity and that $\{\omega(\ee)\}_{\ee\in\mathbb B^d}$ are identically distributed, we obtain
\begin{align*}
\prod_{i=1}^{2d}\mathbb P\left[\sum_{\bb\in\ell_i(\ee)}\omega(\bb)^{-\frac1{p-1}}>t^\frac{p}{p-1}\right]\leq&9\mathbb P\left[\omega(\bb)^{-\frac1{p-1}}>\frac{t^\frac{p}{p-1}}{9}\right]^{2d}\\
 \leq&9\mathbb P\left[\omega(\bb)^{-\gamma}> \frac{t^{\gamma p}}{9^{\gamma(p-1)}}\right]^{2d}\\
 \leq& 9^{1+2d\gamma(p-1)}\mathbb E[\omega(\bb)^{-\gamma}]^{2d}t^{-2d\gamma p}\\
 \lesssim& t^{-2d\gamma p},
\end{align*}
since $\mathbb E[\omega(\bb)^{-\gamma}]<\infty$ for all $\bb\in\mathbb B^d$ by assumption. 
\end{proof}

\section{Acknowledgments}
This work was supported by a grant of the Deutsche Forschungsgemeinschaft (DFG) SCHL 1706/2-1. SN and AS acknowledge the hospitality of the Max-Planck-Institute for Mathematics in the Sciences, Leipzig. Moreover, SN and MS acknowledge the hospitality of the  Weierstrass Institute for Applied Analysis and Stochastics, Berlin. They are also grateful for the support by the DFG in the context of TU Dresden's Institutional Strategy \textit{``The Synergetic University''}. MS is grateful for the support by the University of W\"urzburg. We would like to thank F. Flegel, M. Heida and M. Slowik for stimulating discussions.

\appendix
\section{Proof of Lemma~\ref{L:sumint}, Corollary~\ref{C:1} and Lemma~\ref{L:indwhom}}
\label{appendix}

\begin{proof}[Proof of Lemma~\ref{L:sumint}]
Fix $1\leq q<\infty$. In the following we write $\lesssim$ if $\leq$ holds up to a multiplicative constant that only depends on $(\calL,\calNN)$ and $q$.
By the definition of $\calA^1$ we have
\begin{equation}\label{eq:L:sumint:1}
\forall u\in\calA^1:\quad \int_Y|\nabla u|^q\,dx\lesssim \sum_{x,y\in\calL\cap [0,1]^d \atop x\neq y}|u(y)-u(x)|^q.
\end{equation}
By the definition of $\calNN$ and of $R$ (cf. \eqref{def_of_R}) for any pair $(x,y)\in\calL\cap[0,1]^d$ with $x\neq y$ there exists a path $\ell(x,y)$ in $\calNN\cap B_{\frac{R}{4}}(0)$ that connects $x$ and $y$. Hence,
\begin{align*}
  |u(y)-u(x)|^q&\lesssim \sum_{\ee \in\ell (x,y)}|\nabla u(\ee)|^q\leq\sum_{\ee \in \calNN\cap B_{\frac{R}4}(0)}|\nabla u(\ee)|^q,
\end{align*}
which together with \eqref{eq:L:sumint:1} and the definition of $\calNN$ yields \eqref{est:AB}.
\smallskip

Next, we show \eqref{est:sumint}. By scaling it suffices to consider the case $\e=1$. Since $(A)_{-R}\subset\bigcup_{z\in \Z^d\cap (A)_{-\frac{3}{4}R}}(z+Y)$ (thanks to $R> 4\sqrt d$), estimate \eqref{est:AB} yields
\begin{eqnarray*}
 \int_{(A)_{-R}}|\nabla u|^q\,dx&\leq&
 \sum_{z\in \Z^d\cap(A)_{-\frac{3}{4}R}}\int_{z+Y}|\nabla u|^q\,dx\\
 &\lesssim&
 \sum_{z\in \Z^d\cap(A)_{-\frac{3}{4}R}}\ \sum_{z'\in \Z^d\cap B_{\frac{1}{2}R}(z)}\sum_{\bb\in\calNN_0}|\partial_\bb^1 u(z')|^q\\
 &\lesssim& \sum_{z''\in \Z^d\cap A}\sum_{\bb\in\calNN_0}|\partial_\bb^1 u(z'')|^q,
\end{eqnarray*}
where for the last estimate we used that 
\begin{enumerate}[(i)]
\item $\bigcup_{z\in(A)_{-\frac{3}{4}R}}\Big(B_{\frac{1}{2}R}(z)\Big)\subset A$,
\item $z''\in \Z^d\cap A$  is contained in at most $C\lesssim 1$ sets of the form $\Z^d\cap B_{\frac{1}{2}R}(z)$ with $z\in\Z^d\cap (A)_{-\frac{3}{4}R}$.
\end{enumerate}
\end{proof}

\begin{proof}[Proof of Corollary~\ref{C:1}]

\step{1} Proof of part (a) -- coercivity.\\
Let $(u_\e)$ be a sequence satisfying \eqref{ene:bounded:bc}. We claim that there exists a $u\in g+W^{1,p}(A,\R^n)$ such that, up to a subsequence, $u_\e \wto u$ weakly in $W^{1,\frac{\beta}{\beta+1}p}(A,\R^n)$ and strongly in $L^q(A,\R^n)$. 

Thanks to the boundary condition $u_\e=g$ in $\R^d\setminus (A)_{-\e R}$ and in view of Lemma~\ref{L:compactness}, it suffices to show that
\begin{equation}\label{C:1:eq:000}
  \begin{aligned}
    &\limsup\limits_{\e\downarrow 0}I_\e<\infty\qquad \text{and}\qquad \limsup\limits_{\e\downarrow 0}\tilde E_\e(u_\e)<\infty\\
    &\text{where }I_\e:=\left(\int_{(A)_1}|\nabla
      u_\e|^{\frac{\beta}{\beta+1}p}\,dx\right)^{\frac{\beta+1}{\beta}\frac1p}\qquad\text{and}\qquad
    \tilde E_\e(u_\e):=E_\e(\omega;u_\e,(A)_2).
  \end{aligned}
\end{equation}
The difference of $\tilde E_\e$ and $H_\e$ consists of the energy contribution of edges associated with the boundary layer $(A)_2\setminus (A)_{-\e R}$. Since $u_\e=g$ on $\R^d\setminus (A)_{-\e R}$, we deduce from \eqref{ene:bounded:bc}, that
  \begin{equation}\label{C:1:eq:001}
    \limsup\limits_{\e\downarrow 0}\Big(\tilde E_\e(u_\e)-F_\e(u_\e)\Big)<\infty.
  \end{equation}
 From \eqref{est:coer:u:00001} in the proof of Lemma~\ref{L:coercivity}, we learn that for all sufficiently small $\e>0$ we have
  \begin{equation}\label{C:1:eq:002}
    I_\e^p\leq C\left(\tilde E_\e(u_\e)+1\right),
  \end{equation}
  where here and below $C$ denotes a finite constant that might change from line to line, but can be chosen independent of $\e$.
Furthermore, thanks to \eqref{def:w-conv}, $g\in W^{1,\infty}(\R^d,\R^n)$, and the Poincar\'e-Sobolev inequality applied to $u_\e-g$, we have 
\begin{eqnarray}\label{C:1:eq:003}
  |F_\e(u_\e)| &\leq&   \left(\e^d\!\!\!\sum_{x\in \e\calL\cap A}|f_\e(x)|^{\frac{q}{q-1}}\right)^{\frac{q-1}{q}}\left(\e^d\!\!\!\sum_{x\in \e\calL\cap A}|u_\e(x)|^q\right)^{\frac{1}{q}}\notag\\
  &\leq& C\left(\int_{(A)_1}|u_\e-g|^q\,dx\right)^{\frac{1}{q}}+\left(\int_{(A)_1}|g|^q\,dx\right)^{\frac{1}{q}}\notag\\
  &\leq& C I_\e +1.
\end{eqnarray}
We combine the previous three estimates as follows:
\begin{eqnarray*}
  I_\e^p&\stackrel{\eqref{C:1:eq:002}}{\leq}& C\left(\tilde E_\e(u_\e)+1\right)\\
  &\leq& C\left(\big(\tilde E_\e(u_\e)-F_\e(u_\e)\big)+|F_\e(u_\e)|+1\right)\\
  & \stackrel{\eqref{C:1:eq:003}}{\leq}&C\left(\big(\tilde E_\e(u_\e)-F_\e(u_\e)\big)+I_\e +1\right).
\end{eqnarray*}
Since $p>1$ and thanks to \eqref{C:1:eq:001}, the assertion \eqref{C:1:eq:000} follows.

\step{2} Proof of part (b) -- $\Gamma$-convergence.

By \eqref{def:w-conv} we have
\begin{equation*}
  \lim\limits_{\e\downarrow 0}F_\e(u_\e)=\int_A f\cdot u\,dx
\end{equation*}
for any sequence $u_\e\to u$ in $L^q(A,\R^n)$ with $u_\e\in\calA^\e_g(A)$. Hence, it suffices to prove $\Gamma$-convergence for $H_\e$. We start with the lower bound. Let $(u_\e)$ and $u$ be such that $u_\e\to u$ in $L^{q}(A,\R^n)$. Without loss of generality, we may assume that \eqref{ene:bounded:bc} holds. Then by Step~1, we get $u\in g+W_0^{1,p}(A,\R^n)$. For any $U\Subset A$ we have $H_{\e}(u_\e)\geq E_\e(u_\e,U)$, and thus by Proposition~\ref{P:inf}:
$$\liminf_{\e\downarrow0}H_{\e}(u_\e)\geq\liminf_{\e\downarrow0}E_\e(u_\e,U)\geq \int_UW_\ho(\nabla u(x))\,dx.$$
Taking the supremum over $U\Subset A$ and using $W_\ho\geq0$, the lower bound follows. Next, we prove existence of a recovery sequence. By a standard approximation argument, it is sufficient to consider $u\in g+C_c^\infty(A,\R^n)$. Combining Proposition~\ref{P:sup} and the gluing construction Lemma~\ref{L:glue} and Lemma~\ref{L:glue:convex}, respectively, we obtain a sequence $(u_\e)$ with $u_\e\to u$ in $L^{\frac{\beta}{\beta+1}p}(A,\R^n)$ and $u_\e=u=g$ in $\R^d\setminus (A)_{-\e R}$ such that
$$\lim_{\e\downarrow0}E_\e(u_\e,A)=\lim_{\e\downarrow0}\e^d\!\!\!\!\sum_{z\in\e\Z^d\cap A}\sum_{\bb\in\calE_0}V_\bb(\tau_{\frac{z}\e}\omega;\partial_\bb^\e u_\e(z))=\int_AW_\ho(\nabla u(x))\,dx.$$
By the definition of $R$, cf. Remark~\ref{Remark:finiterange0}, we have $\partial_\bb^\e u(z)=\partial_\bb^\e g(z)$ for all $z\in\e\Z^d\setminus A$ (see \eqref{rem:finiterange}). Combined with the fact that for all $0<\rho<1$ and $\e\ll 1$ we have $H_{\e}(u_\e)\leq E_\e(u_\e,(A)_\rho)$, we deduce that
\begin{align*}
  &\limsup_{\e\downarrow0}H_\e(u_\e)\leq \limsup\limits_{\rho\downarrow 0}\limsup_{\e\downarrow0}E_\e(u_\e,(A)_\rho)=E_\ho(u,A).
\end{align*}
In view of Lemma~\ref{L:compactness} we even have $u_\e\to u$ in $L^q(A,\R^n)$, which concludes the proof of the recovery sequence.
\medskip

The convergence of minima and minimizers is a consequence of the compactness (Step~1) and of $\Gamma$-convergence (Step~2). Note that minimizers of $J_\e$ exist thanks to the coercivity and the continuity of the potentials, and the finite dimensionality of the problem.
\end{proof}

\begin{proof}[Proof of Lemma~\ref{L:indwhom}]
First notice that 
\begin{equation*}
  \inf_{k\in\N}\frac{\mathbb E \left[m_F(\cdot;kY)\right]}{k^d}=\lim_{k\uparrow\infty\atop k\in\N}\frac{\mathbb E \left[m_F(\cdot;kY)\right]}{k^d}
\end{equation*}
holds by monotonicity.
Set $\mathcal I:=\{[a,b)~|~a,b\in\Z^d\}$ and fix $F\in\R^{n\times d}$. We denote by $\mathcal L^1$ the class of integrable functions on $(\Omega,\mathcal F,\mathbb P)$.

\step{1} We claim that 
\begin{equation*}
  m_F:\calI\to\mathcal L^1,\qquad A\mapsto m_F(\cdot,A),
\end{equation*}
defines a subadditive and stationary process (see e.g.~\cite{Krengel}). To that end we need to check the following three properties:
%
\begin{enumerate}[(i)]
\item $m_F$ defines a set function from $\mathcal I$ to $\mathcal L^1$. Indeed, since $V_\bb\geq 0$ and thanks to \eqref{ass:V:1} we have
\begin{align*}
  0\leq \mathbb E[m_F(A)]\leq&|A|\sum_{\bb\in\calE_0}{c_1}(1+\mathbb E[\lambda_\bb](|F|^p+1))<\infty.
\end{align*}
\item $m_F$ is stationary, i.e.\ for every $z\in\Z^d$, $\omega\in\Omega$ and $A\in \mathcal I$ it holds $m_F(\tau_z\omega;A)=m_F(\omega;z+A)$. This is a direct consequence of a change of variable in the infimum problem in the definition of $m_F$ and the stationarity of the interaction potentials $V_\bb$.
\item $m_F(\omega;\cdot)$ is subadditiv. Let $A_1,\dots,A_M\subset \mathcal I$ be disjoint sets and $\bigcup_{i=1}^MA_i=A\in\mathcal I$. For every $A_i$, we find $\phi_{i}\in\calA_{0}^1(A_i)$ such that
$$E_1(\omega;g_F+\phi_{i},A_i) = m_F(\omega;A_i),$$ where $g_F$ denotes the linear function $g_F(x):=Fx$.
We define $\phi:=\sum_{i=1}^M\phi_{i}\in\calA_0^1(A)$. Since $\phi_{i}\in\calA_0^1(A_i)$ for all $i=1,\dots,M$, we have $\phi=\phi_{i}$ on $(A_i)_R$ for $i=1,\dots,M$. Moreover, we have that for every $z\in\Z^d\cap A$ there exists a unique $i\in\{1,\dots,M\}$ such that $z\in A_i$. Hence, subadditivity follows: 
\begin{align*}
  m_F(\omega;A)\leq&E_1(\omega;g_F+\phi,A)=\sum_{i=1}^M\sum_{z\in\Z^d\cap A_i}\sum_{\bb\in\calE_0}V_\bb(\tau_z\omega;\partial_\bb^1g_F(z)+\partial_\bb^1\phi(z))\\
  =&\sum_{i=1}^ME_1(\omega;g_F+\phi_{i},A_i)=\sum_{i=1}^Mm_F(\omega;A_i).
 \end{align*}
\end{enumerate}
  
\step{2} Ergodic limit for cubes in $\calI$.\\
By the previous step, we are in position to apply the Ackoglu-Krengel subadditive ergodic theorem in the version \cite[Theorem 1]{ACG11}, cf. \cite[Theorem 2.9]{AK81}: There exists a set $\Omega_F\subset\Omega$ of full measure such that for all $\omega\in\Omega_F$ and all cubes of the form $Q=[a,b)$ with $a,b\in\Z^d$, we have 
\begin{align*}
 \lim_{k\uparrow\infty\atop k\in\N}\frac{1}{|kQ|}m_F(\omega;kQ)=\lim_{k\uparrow\infty\atop k\in\N}\frac1{k^d}\mathbb E\left[m_F(\cdot;kY)\right]=W_0(F).
\end{align*}

\step{3} General cubes.\\
To pass to the limit along $\R$ and cubes $Q$ satisfying $\bar Q=[a,b]$ with $a,b\in\R^d$, we adapt the arguments of \cite[Proposition 1]{DMM86} (and \cite[Corollary~3.3]{MM94}) to our degenerate and discrete situation. For every $\delta>0$ there exists a dilation $T\geq 1$ and cubes $Q_\delta^-$, $Q_\delta^+\in\calI$ such that 
$$Q_\delta^-\subset TQ\subset Q_\delta^+,\quad \frac{|Q_\delta^-|}{|TQ|}\geq 1-\delta,\quad \frac{|TQ|}{|Q_\delta^+|}\geq 1-\delta.$$
The growth condition \eqref{ass:V:1} on $V_\bb$ yields for all open bounded Lipschitz sets with $B\subset A$ the following inequality
\begin{align*}
 m_F(\omega;A)\leq m_F(\omega;B)+\sum_{z\in\Z^d\cap A\setminus B}\sum_{\bb\in\calE_0}{c_1}(1+\lambda_\bb(\tau_z\omega)(|F|^p+1)).
\end{align*}
For every $t>0$, we set $t^+=\lfloor t\rfloor+1$ and $t^-=\lfloor t \rfloor$. Thanks to the ergodic theorem in Step~2, we have
\begin{align*}
&\lim_{t\uparrow\infty}\frac{1}{|t^+Q_\delta^+|}\sum_{z\in\Z^d\cap (t^+Q_\delta^+\setminus tTQ)}\lambda_\bb(\tau_z\omega)\leq \delta\mathbb{E}[\lambda_\bb],\\
&\lim_{t\uparrow\infty}\frac{1}{|tTQ|}\sum_{z\in\Z^d\cap(tTQ\setminus t^-Q_\delta^-)}\lambda_\bb(\tau_z\omega)\leq \delta\mathbb{E}[\lambda_\bb].
\end{align*}
Thus,
\begin{align*}
 W_0(F)=&\lim_{t^+\uparrow\infty}\frac{m_F(\omega;t^+Q_\delta^+)}{|t^+Q_\delta^+|}\\
 \leq& \liminf_{t\uparrow\infty}\frac{m_F(\omega;tTQ)}{|tTQ|}+\delta\sum_{\bb\in\calE_0}{c_1}(\mathbb E[\lambda_\bb](|F|^p+1)+1)\\
 \leq& \limsup_{t\uparrow\infty}\frac{m_F(\omega;tTQ)}{|tTQ|}+\delta\sum_{\bb\in\calE_0}{c_1}(\mathbb E[\lambda_\bb](|F|^p+1)+1)\\
 \leq&\lim_{t^-\uparrow\infty}\frac{m_F(\omega;t^-Q_\delta^-)}{|t^-Q_\delta^-|}+2\delta\sum_{\bb\in\calE_0}{c_1}(\mathbb E[\lambda_\bb](|F|^p+1)+1)\\
 =&W_0(F)+2\delta\sum_{\bb\in\calE_0}{c_1}(\mathbb E[\lambda_\bb](|F|^p+1)+1).
\end{align*}
The assertion follows by the arbitrariness of $\delta>0$.
\end{proof}

\bibliographystyle{siamplain}

\end{document}